\newtheorem{theorem}{Theorem}[section]
\newtheorem{lemma}[theorem]{Lemma}
\newtheorem{cor}[theorem]{Corollary}
\newtheorem{prop}[theorem]{Proposition}
\theoremstyle{definition}
\newtheorem{definition}[theorem]{Definition}
\theoremstyle{remark}
\newtheorem{remark}[theorem]{Remark}
\numberwithin{equation}{section}
\begin{document}

\title{Hypercyclic subspaces and weighted shifts}

\author[Q. Menet]{Quentin Menet}
\address{Institut de Mathématique\\
Université de Mons\\
20 Place du Parc\\
7000 Mons, Belgique}
\email{Quentin.Menet@umons.ac.be}
\thanks{The author is supported by a grant of FRIA}

\subjclass[2010]{Primary 47A16}
\keywords{Hypercyclic operators; Hypercyclic subspaces; Weighted shifts}

\date{}

\begin{abstract}
We first generalize the results of Le\'on and Müller [Studia Math. 175(1) 2006] on hypercyclic subspaces to sequences of operators on Fréchet spaces with a continuous norm. Then we study the particular case of iterates of an operator $T$ and show a simple criterion for having no hypercyclic subspace. Finally we deduce from this criterion a characterization of weighted shifts with hypercyclic subspaces on the spaces $l^p$ or $c_0$, on the space of entire functions and on certain Köthe sequence spaces. We also prove that if $P$ is a non-constant polynomial and $D$ is the differentiation operator on the space of entire functions then $P(D)$ possesses a hypercyclic subspace.
\end{abstract}
\maketitle

\section*{Introduction}

A sequence of continuous linear operators $(T_n)_{n\ge 1}$ between Fréchet spaces $X$, $Y$ is hypercyclic if there exists a vector $x\in X$ (called hypercyclic) such that the set $\{T_nx:n\ge 1\}$ is dense in $Y$. We say that an operator $T:X\rightarrow X$ is hypercyclic if the sequence $(T^n)_{n\ge 1}$ is hypercyclic. An important question about hypercyclic sequences is to know if there exists an infinite-dimensional closed subspace in which every non-zero vector is hypercyclic. Such a subspace is called a hypercyclic subspace. The notion of hypercyclic subspaces is interesting because some hypercyclic operators, like the translation operators on the space of entire functions, possess a hypercyclic subspace (see \cite{Bernal}) while some others, like scalar multiples of the backward shift on $l^p$, do not possess any hypercyclic subspace (see \cite{Montes}).

In the case of a weakly mixing operator on a separable complex Banach space, Gonz\'alez, Le\'on and Montes have obtained the following characterization:
\begin{theorem}[\cite{Gonzalez}]\label{Montes}
Let $T$ be a weakly mixing operator on a separable complex Banach space $X$. Then the following assertions are equivalent:
\begin{enumerate}[\upshape (1)]
\item $T$ has a hypercyclic subspace;
\item there exist an increasing sequence $(n_k)$ of positive integers and an infinite-dimensional closed subspace $M_0$ of $X$ such that $T^{n_k}x\rightarrow 0$ for all $x\in M_0$;
\item there exist an increasing sequence $(n_k)$ of positive integers and an infinite-dimensional closed subspace $M_b$ of $X$ such that $\sup_k\|T^{n_k}|_{M_b}\|<\infty$;
\item the essential spectrum of $T$ intersects the closed unit disk.
\end{enumerate}
\end{theorem}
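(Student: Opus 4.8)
The plan is to establish the four conditions through the cycle $(2)\Rightarrow(3)\Rightarrow(4)\Rightarrow(2)$, closing the loop among the two geometric conditions and the spectral one, and then to tie in $(1)$ via the equivalence $(1)\Leftrightarrow(2)$, since the real work lies in passing between the subspace conditions (2), (3) and the essential-spectrum condition (4). The implication $(2)\Rightarrow(3)$ is immediate: if $T^{n_k}x\to 0$ for every $x$ in the closed (hence Banach) subspace $M_0$, then $\sup_k\|T^{n_k}x\|<\infty$ pointwise on $M_0$, and Banach--Steinhaus applied to the family $\{T^{n_k}|_{M_0}\}$ upgrades this to a uniform bound $\sup_k\|T^{n_k}|_{M_0}\|<\infty$; one takes $M_b=M_0$.

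For $(3)\Rightarrow(4)$ I would argue by contraposition, assuming $\sigma_{\mathrm{ess}}(T)\cap\{|\lambda|\le 1\}=\emptyset$. Since eigenvalues inside the closed disk are then isolated of finite multiplicity and can accumulate only on the essential spectrum, which is at positive distance from the disk, there are finitely many of them; a Riesz decomposition gives a finite-dimensional spectral subspace $F$ and a closed finite-codimensional $T$-invariant complement $G$ with $\sigma(T|_G)\subset\{|\lambda|>1\}$. Because $T|_G$ is invertible with the spectrum of its inverse inside the open disk, there are $c>0$ and $\rho>1$ with $\|T^n x\|\ge c\,\rho^n\|x\|$ for all $x\in G$ and all $n$. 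Any infinite-dimensional closed $M_b$ meets the finite-codimensional $G$ in an infinite-dimensional subspace, so $\|T^{n_k}|_{M_b}\|\ge c\,\rho^{n_k}\to\infty$, contradicting (3). The technical core is extracting the uniform exponential lower bound, which rests on stability of the semi-Fredholm index and the separation of the disk part of $\sigma(T)$ from the expanding essential part.

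The heart of the proof is $(4)\Rightarrow(2)$. Given $\lambda$ in the essential spectrum with $|\lambda|\le 1$, one first produces a singular sequence for the appropriate component of $\sigma_{\mathrm{ess}}(T)$: a normalized basic sequence $(x_j)$ with no convergent subsequence and $\|(T-\lambda)x_j\|\to 0$, whence $\|(T^n-\lambda^n)x_j\|\to 0$ for each fixed $n$, so $\|T^n x_j\|$ is controlled by $|\lambda|^n\le 1$ up to a vanishing error. A successive selection assembles a subsequence into an infinite-dimensional closed subspace on which the iterates stay bounded along some $(n_k)$. The delicate points are to obtain genuine decay $T^{n_k}x\to 0$ for \emph{every} vector of the subspace, not just the generators, and to treat the boundary case $|\lambda|=1$ where approximate eigenvectors give only bounded orbits. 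This is exactly where weak mixing enters: by B\`es--Peris it yields the Hypercyclicity Criterion, hence a dense set of vectors on which $T^{m_k}\to 0$ along some sequence; after passing to a common subsequence, a gliding-hump/perturbation argument interleaves these decaying vectors with the quasi-eigenvectors to manufacture a closed infinite-dimensional subspace with $T^{n_k}\to 0$ throughout. Keeping the perturbations small enough to preserve simultaneously the basic-sequence property (hence closedness and infinite-dimensionality) and the uniform decay on the whole span is the main obstacle of the argument.

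Finally, $(1)\Leftrightarrow(2)$ follows from the criterion for hypercyclic subspaces. For a weakly mixing $T$, the existence of an infinite-dimensional closed subspace on which $T^{n_k}\to 0$ is precisely what feeds the Le\'on--M\"uller/Montes construction of a closed infinite-dimensional subspace of hypercyclic vectors (sufficiency), while conversely a hypercyclic subspace cannot be bounded below along its orbit and forces the existence of such a null subspace (necessity). Since this is exactly the mechanism that the present paper generalizes to sequences of operators on Fr\'echet spaces with a continuous norm, I would phrase this last equivalence so that it slots directly into that general framework.
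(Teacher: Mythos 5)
You should first note that the paper does not prove this theorem at all: it is quoted from Gonz\'alez--Le\'on--Montes \cite{Gonzalez} as background, so your proposal can only be measured against that proof and against the criteria the paper builds around it (Theorems~\ref{subspace ban}, \ref{codim ban}, \ref{thmbanach}). Your steps $(2)\Rightarrow(3)$ (Banach--Steinhaus) and $(2)\Rightarrow(1)$ (the Montes/Le\'on--M\"uller construction, which is exactly the paper's Theorem~\ref{thm M0}) are sound. The decisive flaw is your contrapositive of $(3)\Rightarrow(4)$. From $\sigma_{\mathrm{ess}}(T)\cap\overline{\mathbb{D}}=\emptyset$ you infer that the spectrum in the closed disk consists of finitely many isolated eigenvalues of finite multiplicity and extract a Riesz decomposition $X=F\oplus G$ with $\sigma(T|_G)\subset\{|\lambda|>1\}$. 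This is false in general: take $T=2B$ with $B$ the backward shift on $\ell^2$. Then $\sigma_{\mathrm{ess}}(T)$ is the circle of radius $2$, disjoint from $\overline{\mathbb{D}}$, yet every $\lambda$ with $|\lambda|<2$ is a \emph{non-isolated} eigenvalue ($T-\lambda$ is Fredholm of index $1$), the spectrum is the full disk of radius $2$, and no invariant finite-codimensional complement with spectrum outside $\overline{\mathbb{D}}$ exists. Your argument tacitly assumes the Fredholm component containing $\overline{\mathbb{D}}$ has index $0$; when the index is nonzero that component lies entirely in the spectrum. The correct route, and the one \cite{Gonzalez} takes, is through the Calkin algebra: $\neg(4)$ means $\pi(T)$ is invertible with $\sigma(\pi(T)^{-1})$ inside the open disk, and the essential minimum modulus then yields finite-codimensional, $n$-dependent, \emph{non-invariant} subspaces $E_n$ with $\|T^nx\|\ge C_n\|x\|$ on $E_n$ and $C_n\to\infty$ --- precisely condition (i) of Theorem~\ref{thmbanach}, i.e.\ the hypothesis of Theorem~\ref{codim ban} --- which refutes $(1)$, $(2)$ and $(3)$ simultaneously. (For $T=2B$ this is visible directly: $\|(2B)^nx\|=2^n\|x\|$ on $\{x:x_1=\cdots=x_n=0\}$.)

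Two further gaps. In $(4)\Rightarrow(2)$ you produce, for $\lambda\in\sigma_{\mathrm{ess}}(T)\cap\overline{\mathbb{D}}$, a normalized sequence with $\|(T-\lambda)x_j\|\to0$; but such a singular sequence need not exist for an arbitrary point of the essential spectrum --- if $T-\lambda$ is upper semi-Fredholm with closed range of infinite codimension (e.g.\ $\lambda=0$ for an infinite direct sum of forward shifts), $T-\lambda$ is bounded below and no approximate kernel exists. One must work with the \emph{left} essential spectrum and use hypercyclicity (which forces $\sigma_p(T^*)=\emptyset$, hence rules out the semi-Fredholm case of index $-\infty$) to guarantee the quasi-eigenvector basic sequence; this is where the semi-Fredholm theory of \cite{Gonzalez} genuinely enters, and your sketch elides it. Finally, your $(1)\Rightarrow(2)$ (``a hypercyclic subspace forces a null subspace'') is asserted, not proved: extracting a single sequence $(n_k)$ with $T^{n_k}x\to0$ for \emph{all} $x$ in some infinite-dimensional closed subspace directly from a hypercyclic subspace is not straightforward, and the known proof does not attempt it --- it closes the loop via $\neg(4)\Rightarrow\neg(1)$ using the lower-bound criterion above. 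Your overall architecture for $(4)\Rightarrow(2)$ (quasi-eigenvectors giving bounded iterates, the Hypercyclicity Criterion giving a dense null set, and a small-perturbation argument on basic sequences) does have the right shape and matches the mechanism the paper generalizes in Theorems~\ref{thm M0} and~\ref{thm M}, but as it stands the proposal proves neither $(3)\Rightarrow(4)$ nor the necessity direction involving $(1)$.
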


The implication from (2) to (1) was already proved in 1996 if $T$ satisfies the Hypercyclicity Criterion for $(n_k)$ (see \cite{Montes}) and was even generalized to operators on separable Fréchet spaces with a continuous norm (see \cite{Bonet}, \cite{Petersson}) and to sequences of operators on separable Banach spaces satisfying a certain condition $(C)$ (see \cite{Leon}).
\begin{definition}[\cite{Leon}]\label{def C ban}
A sequence $(T_n)\subset L(X,Y)$, the space of continuous linear operators from $X$ to $Y$, with $X$, $Y$ Banach spaces, satisfies \emph{condition} (C) if there exist an increasing sequence $(n_k)$ of positive integers and a dense subset $X_0\subset X$ such that
\begin{enumerate}
\item $T_{n_k}x\rightarrow 0$ for every $x\in X_0$;
\item $\bigcup_k T_{n_k}(\{x\in X:\|x\|<1\})$ is dense in $Y$.
\end{enumerate}
\end{definition}
In their article \cite{Leon}, Le\'on and Müller have also highlighted the following two criteria:
\begin{theorem}[{\cite[Theorem 20]{Leon}}]\label{subspace ban}
Let $(T_n)\subset L(X,Y)$ be a sequence of operators between separable Banach spaces $X$, $Y$ satisfying condition $(C)$ for a sequence $(n_k)$. Suppose that there are infinite-dimensional closed subspaces $M_1,M_2,\ldots$ such that $X\supset M_1\supset M_2 \supset \dots$ and $\sup_k\|T_{n_k}|_{M_k}\|<\infty$. Then $(T_n)$ possesses a hypercyclic subspace.
\end{theorem}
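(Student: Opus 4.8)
The plan is to construct, by a single induction, a normalized basic sequence $(u_j)_{j\ge 1}$ in $X$ whose closed linear span $M:=\overline{\mathrm{span}}\{u_j:j\ge 1\}$ will be the desired subspace. Since such an $M$ is automatically closed and infinite-dimensional, the entire content is that \emph{every} nonzero $x\in M$ is hypercyclic for $(T_n)$. Fix a dense sequence $(y_l)_{l\ge 1}$ in $Y$. The two halves of condition $(C)$ are to play complementary roles: property (2) (density of $\bigcup_k T_{n_k}(B_X)$ in $Y$) will force the \emph{density of orbits}, while property (1) ($T_{n_k}x\to 0$ on the dense set $X_0$) together with the uniform bound $C:=\sup_k\|T_{n_k}|_{M_k}\|<\infty$ will be used to \emph{control the remainder terms}, so that the density survives for every choice of coefficients and not merely on a residual set.

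Concretely, I would run the induction so that at stage $j$ I select an index $k_j$ from $(n_k)$ and a vector $u_j$ with $\|u_j\|\le 1$ lying in a deep subspace $M_{r_j}$, with $r_j\uparrow\infty$, arranging three things at once. First, $T_{n_{k_j}}u_j$ approximates a prescribed target from $(y_l)$; enumerating the pairs (block, target) and exploiting that rescaling preserves denseness, this guarantees that the orbit of each fixed block is dense in $Y$, which is exactly where property (2) is invoked. Second, earlier vectors must be negligible under later operators: taking $u_j$ close to $X_0$ and invoking property (1) makes $T_{n_{k_i}}u_j$ tiny whenever $i<j$, and the finitely many intermediate cross terms are killed by choosing $k_j$ appropriately. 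Third, to keep $(u_j)$ basic I would superimpose the classical perturbation principle, choosing $u_j$ as a summably small perturbation of a basis vector extracted from the infinite-dimensional $M_{r_j}$, so that the basis constant $K$ and the coordinate functionals stay uniformly controlled.

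The decisive ingredient, and the clean point that makes the nested-subspace hypothesis work, is the control of the \emph{far tail}. Fix a nonzero $x=\sum_j a_j u_j$ with first nonzero coordinate $i_0$, a target $y$, and $\varepsilon>0$; use an index $k$ attached to block $i_0$, so that $a_{i_0}T_{n_k}u_{i_0}$ is close to $y$ while the finitely many intermediate terms are small as above. What remains is $T_{n_k}\bigl(\sum_{j>N}a_j u_j\bigr)$ over blocks created after $k$ was fixed, whose images can no longer be made individually small. Here I would arrange $r_j\ge k$ for all $j>N$, so that each such $u_j\in M_k$ and hence the tail vector $w:=\sum_{j>N}a_j u_j$ lies in the closed subspace $M_k$; then $\|T_{n_k}w\|\le C\|w\|=C\,\|x-\sum_{j\le N}a_j u_j\|$, which tends to $0$ as $N\to\infty$ simply because the basis expansion of $x$ converges. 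Thus the uniform bound $C$ converts the automatic norm-smallness of the basis tail into smallness of its image, uniformly over all coefficient sequences.

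I expect the main obstacle to be the bookkeeping that reconciles these requirements: the orbit-generating vectors supplied by property (2) a priori live in all of $X$, whereas the tail estimate needs all sufficiently late basis vectors to sit inside the $M_k$. The resolution is to process one requirement per stage, so that each stage imposes only finitely many new constraints (small image under the finitely many previously fixed $T_{n_{k_i}}$, membership in a prescribed $M_{r_j}$, and a small perturbation of an auxiliary basis vector chosen inside that subspace), and to verify at the end, via the perturbation lemma, that $(u_j)$ is indeed basic with bounded basis constant. Granting this coordination, combining the three estimates shows $\|T_{n_k}x-y\|<\varepsilon$, so every nonzero $x\in M$ has dense orbit and $M$ is a hypercyclic subspace.
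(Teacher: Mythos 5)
Your far-tail estimate is exactly right and matches the paper's mechanism (in the proof of Theorem \ref{thm M}, the tail $\sum_{j\ge l}a_je_{k_j}$ lies in $M_{k_l}$, where the uniform bound converts norm-smallness of the tail into smallness of its image), but the orbit-density half of your construction has a genuine gap. You attach to each basis vector $u_j$ a single time $n_{k_j}$ and a single target, chosen \emph{jointly} via property (2) of condition $(C)$, and then, in the verification, you need the indices ``attached to block $i_0$'' to make $a_{i_0}T_{n_k}u_{i_0}$ run through a dense set. But once $u_{i_0}$ has been fixed at stage $i_0$, you have no way to force $T_{n_k}u_{i_0}$ to approximate further prescribed targets at later stages: property (2) supplies vector--time \emph{pairs}; it cannot steer the images of an already-chosen vector, which may simply fail to have a dense orbit along any subsequence. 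Your enumeration of pairs (block, target) does not repair this, because each later stage devoted to block $i_0$ necessarily creates a \emph{new} vector $u_j$, whose useful term $a_jT_{n_{k_j}}u_j\approx a_jy_l$ comes with an unknown, possibly zero, coefficient $a_j$. So with one (time, target) pair per vector, the orbit of the first nonzero block approximates exactly one point of $Y$, and density fails.

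The paper's proof (Theorem \ref{thm M0}, to which Theorem \ref{thm M} reduces the nested-subspace hypothesis) contains precisely the missing idea: each basis vector is an infinite package $z_i=e_i+\sum_{j\ge 1}z_{i,j}$, where the backbone $e_i$ carries the normalization and basicness and lies in the good subspace, while the summably small pieces $z_{i,j}\in X_0$ are chosen, each with its own time $n_{i,j}$, so that $T_{n_{i,j}}z_{i,j}\approx y_j$; the order $\prec$ on $\mathbb{N}\times\mathbb{N}$ organizes the cross-terms (a newly chosen piece is made small under the finitely many previously fixed operators by continuity, and previously fixed pieces are made small under a newly chosen time by property (1) --- note this requires the pieces to lie \emph{exactly} in $X_0$, not merely close to it, since the operators $T_{n_k}$ have no uniform bound on all of $X$, a point your ``close to $X_0$'' step glosses over). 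Then a single nonzero coefficient $a_k$, normalized to $1$ (harmless, as scaling by a nonzero scalar preserves density of orbits), activates all the generators $z_{k,r}$ riding on $z_k$, giving $T_{n_{k,r}}z\approx y_r$ for every $r$. A secondary flaw in your bookkeeping is related: the backbone part of an ``intermediate'' vector $u_j$ with $i_0<j\le N$ is a norm-one element of $M_{r_j}$, so its image under the chosen time is only \emph{bounded} by $C$, not small; in the paper these backbone contributions are not killed individually but aggregated with the far tail into a single element of the subspace where the uniform bound (or, after passing to $M_0$, pointwise convergence to $0$) applies. Without the infinite-package construction your scheme cannot be completed as written.
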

\begin{theorem}[{\cite[Corollary 23]{Leon}}]\label{codim ban}
Let $(T_n)\subset L(X,Y)$ be a sequence of operators between separable Banach spaces $X$, $Y$. Suppose that there are closed subspaces $E_n\subset X$ $(n\ge1)$ of finite codimension and numbers $C_n$ $(n\ge 1)$ with $C_n\rightarrow \infty$ such that
\[\|T_nx\|\ge C_n \|x\| \quad \text{for any }x\in E_n, n\ge 1.\]
Then $(T_n)$ does not possess any hypercyclic subspace.
\end{theorem}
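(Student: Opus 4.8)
The plan is to argue by contradiction and to isolate the only quantitative input, namely that the expansion hypothesis forces every restriction of a subsequence of $(T_n)$ to an infinite-dimensional subspace to be unbounded. Concretely, suppose $(T_n)$ possessed a hypercyclic subspace, and let $N\subseteq X$ be any infinite-dimensional closed subspace and $(n_k)$ any increasing sequence of positive integers. Since each $E_{n_k}$ has finite codimension in $X$, the subspace $N\cap E_{n_k}$ has finite codimension in $N$; in particular it is infinite-dimensional and contains a unit vector $w_k$. The hypothesis then gives
\[ \|T_{n_k}|_N\|\ge \|T_{n_k}w_k\|\ge C_{n_k}\|w_k\|=C_{n_k}, \]
and since $C_n\to\infty$ forces $C_{n_k}\to\infty$ we obtain $\sup_k\|T_{n_k}|_N\|=\infty$. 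Thus \emph{no} infinite-dimensional closed subspace of $X$ can carry a subsequence of uniformly bounded restrictions.

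It remains to combine this with the necessary direction for hypercyclic subspaces: the existence of a hypercyclic subspace yields an increasing sequence $(n_k)$ and an infinite-dimensional closed subspace $M_b$ with $\sup_k\|T_{n_k}|_{M_b}\|<\infty$. This is the sequential analogue of the implication $(1)\Rightarrow(3)$ in Theorem \ref{Montes}, and is essentially the content of the theorem in \cite{Leon} of which the present result is a corollary. Applying the observation of the first paragraph to $N=M_b$ and to this very sequence $(n_k)$ gives $\sup_k\|T_{n_k}|_{M_b}\|=\infty$, contradicting boundedness. Hence $(T_n)$ possesses no hypercyclic subspace.

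The main obstacle is the necessary condition invoked above, not the quantitative step, which is elementary. One is tempted instead to produce directly a single nonzero $x\in M$ with $\liminf_n\|T_nx\|>0$ (so that its orbit avoids a neighbourhood of $0$ and cannot be dense); but this route is essentially hopeless when $\bigcap_{n\ge N}E_n=\{0\}$ for every $N$ and the codimensions of the $E_n$ tend to infinity, for then the bulk of any candidate vector necessarily lies outside $E_n$ for infinitely many $n$, where no lower bound on $\|T_nx\|$ is available, and successive corrections drawn from deeper intersections only pollute the later times. The real content is therefore to manufacture, inside the hypercyclic subspace, an infinite-dimensional subspace together with a single time-sequence along which all of its directions return near $0$ simultaneously; synchronising these returns across infinitely many independent directions is the technical heart of the argument and is carried out by the diagonal (gliding-hump) construction underlying the theorem that this corollary follows from.
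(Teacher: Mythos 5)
Your first paragraph is fine, but it is the trivial half of the argument; the proof stands or falls on the statement you invoke in the second paragraph, and that statement is not available. The claim that a hypercyclic subspace for an arbitrary sequence $(T_n)\subset L(X,Y)$ yields an increasing sequence $(n_k)$ and an infinite-dimensional closed subspace $M_b$ with $\sup_k\|T_{n_k}|_{M_b}\|<\infty$ is \emph{not} ``essentially the content'' of the Le\'on--M\"uller theorem this corollary follows from. The implication $(1)\Rightarrow(3)$ of Theorem \ref{Montes} is known only for a single weakly mixing operator on a separable \emph{complex} Banach space, and its proof goes through Fredholm theory and the essential spectrum of $T$ --- machinery that has no analogue for a general sequence of operators between two different spaces $X$ and $Y$ (there is no spectrum to speak of). Indeed, the paper stresses that the criteria are necessary only in that restricted setting, and remarks after Corollary \ref{corfre} that no such converse is known even for single operators on Fr\'echet spaces with a continuous norm. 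So your argument reduces an unconditional statement to a conditional one that is unproven (and possibly false) in the stated generality; as written, it only proves the corollary for weakly mixing operators on separable complex Banach spaces, where it is circular to the extent that the hard direction of Theorem \ref{Montes} is being assumed.

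Moreover, the route you dismiss as ``essentially hopeless'' is exactly how both Le\'on--M\"uller and this paper (Theorem \ref{no subspace}, which contains the present statement as the Banach case) actually proceed: one constructs directly, inside the putative hypercyclic subspace $M$, a single vector $x=\sum_{\nu}e_\nu$ with $\|T_jx\|\to\infty$ for \emph{all} large $j$, contradicting density of the orbit. Your objection --- that the bulk of $x$ lies outside $E_j$ for most $j$, where no lower bound is available, and that corrections pollute later times --- misreads what is needed: the terms $e_\nu$ with $\nu\neq n$ are never estimated from \emph{below} at time $j$; they are prevented from \emph{cancelling} the dominant term. Concretely, one chooses $k_{n-1}<j\le k_n$ with $C_j\ge n^3$, takes $e_n\in M\cap\bigcap_{k_{n-1}<j\le k_n}E_j$ with $\|e_n\|=1/n^2$ (possible since these intersections have finite codimension in the infinite-dimensional space $M$), and uses Lemma \ref{lem karl} applied to $F_{k,j}=\mathrm{span}(T_je_1,\dots,T_je_k)$ to place each later $T_je_\nu$ in a closed finite-codimension subspace $L_{k,j}$ on which $\|y+x'\|\ge\max\bigl(\frac{\|x'\|}{2+\varepsilon},\frac{\|y\|}{1+\varepsilon}\bigr)$; this yields $\|T_jx\|\ge\frac{1}{(1+\varepsilon)(2+\varepsilon)}\|T_je_n\|\ge\frac{C_j\|e_n\|}{(1+\varepsilon)(2+\varepsilon)}\ge\frac{n}{(1+\varepsilon)(2+\varepsilon)}$. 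No synchronisation of returns to $0$ across infinitely many directions is required anywhere; the gliding hump produces divergence of one orbit, which is elementary and unconditional. To repair your proof you would either have to prove the bounded-restriction necessary condition for general sequences (a substantial open-ended task) or switch to this direct construction.
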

In the case of an operator $T$ on a separable Banach space, condition $(C)$ is equivalent to being weakly mixing (see \cite{Leon}) and it follows from the proof of Theorem \ref{Montes} that the above two criteria are also necessary conditions if $T$ is an operator on a separable \emph{complex} Banach space. In their book \cite{Karl}, Grosse-Erdmann and Peris have proved that Theorem \ref{codim ban} can be stated for an operator on a Fréchet space.

We prove in the first section that the implication from (2) to (1) in Theorem \ref{Montes}, and Theorems \ref{subspace ban} and \ref{codim ban} are still true for sequences of operators between Fréchet spaces with a continuous norm.

In the second section, we obtain some conditions that are equivalent to the criterion of Theorem \ref{codim ban} in the case of an operator $T$ on a Banach space or on a Fréchet space. If $T$ is an operator on a Banach space, this result is the following:
\begin{theorem}\label{thmbanach}
Let $X$ be a Banach space and $T:X\rightarrow X$ a continuous linear operator. The following conditions are equivalent:
\begin{enumerate}[\upshape (i)]
\item there exist a sequence of numbers $(C_n)$ with $C_n\rightarrow \infty$ and a sequence of subspaces $(E_n)$ of finite codimension such that for every $n\ge 1$,
\[\|T^nx\|\ge C_n \|x\| \quad \text{for any } x\in E_n;\]
\item there exist $C>1$, a subspace $E$ of finite codimension and an integer $n\ge 1$ such that
\[\|T^{n}x\|\ge C \|x\| \quad \text{for any } x\in E;\]
\item there exist $C>1$, a subspace $E$ of finite codimension and an integer $n\ge 1$ such that for any $x\in E$, there exists an integer $k\le n$ for which
\[\|T^kx\|\ge C \|x\|.\]
\end{enumerate}
\end{theorem}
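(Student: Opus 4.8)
The plan is to prove the cycle $(i)\Rightarrow(ii)\Rightarrow(iii)\Rightarrow(i)$. The first two implications are immediate: for $(i)\Rightarrow(ii)$ I would pick an index $n$ with $C_n>1$ and take $C=C_n$, $E=E_n$; for $(ii)\Rightarrow(iii)$ the choice $k=n$ works for every $x\in E$. So the whole content lies in recovering $(i)$ from $(iii)$, which I would split as $(iii)\Rightarrow(ii)$ followed by $(ii)\Rightarrow(i)$. Throughout I rely on two elementary stability facts about finite-codimension subspaces: the preimage $T^{-i}(E)$ of a finite-codimension subspace $E$ under the continuous operator $T^{i}$ again has finite codimension (as $X/T^{-i}(E)$ embeds into the finite-dimensional space $X/E$), and a finite intersection of finite-codimension subspaces has finite codimension. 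These let me freely intersect preimages of $E$ while staying in the admissible class.

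For $(ii)\Rightarrow(i)$, I assume $\|T^{n}x\|\ge C\|x\|$ on $E$ with $C>1$. For $j\ge 1$ I set $E^{(j)}=\bigcap_{i=0}^{j}T^{-in}(E)$, a finite-codimension subspace; for $x\in E^{(j)}$ the vectors $x,T^{n}x,\dots,T^{jn}x$ all lie in $E$, so iterating the hypothesis gives $\|T^{in}x\|\ge C^{i}\|x\|$ for $0\le i\le j$. Writing an arbitrary exponent as $m=jn+r$ with $0\le r<n$, I take $x\in E^{(j+1)}$ and use $T^{(j+1)n}x=T^{\,n-r}(T^{m}x)$ together with $\|T^{\,n-r}\|\le K:=\max_{1\le s\le n}\|T^{s}\|$ to obtain $\|T^{m}x\|\ge \|T^{(j+1)n}x\|/K\ge C^{\,j+1}\|x\|/K$. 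Setting $E_m=E^{(j+1)}$ and $C_m=C^{\,j+1}/K$ (with $j=\lfloor m/n\rfloor$) then yields $C_m\to\infty$ and the required lower bounds, which is exactly $(i)$.

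The heart of the argument is $(iii)\Rightarrow(ii)$. Starting from $(iii)$ with constant $C>1$, subspace $E$ and window $n$, I put $F_j=\bigcap_{i=0}^{jn}T^{-i}(E)$ and take any $x\in F_j$. Applying $(iii)$ to $x$ produces $k_1\le n$ with $\|T^{k_1}x\|\ge C\|x\|$, and since $x\in F_j$ the vector $T^{k_1}x$ still lies in $F_{j-1}$, so $(iii)$ applies again; after $j$ steps I obtain exponents $0=s_0<s_1<\cdots<s_j\le jn$ with consecutive gaps $s_{i+1}-s_i\le n$ and $\|T^{s_i}x\|\ge C^{i}\|x\|$ for all $i$. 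The main obstacle is that the productive exponent $s_j$ depends on $x$, so these estimates do not directly bound a single fixed power; the remedy is a synchronization step. Fixing the target $N=j\le s_j$, I locate the index $i$ with $s_i\le N<s_{i+1}$, so that $1\le s_{i+1}-N\le n$, and from $T^{s_{i+1}}x=T^{\,s_{i+1}-N}(T^{N}x)$ I get $\|T^{N}x\|\ge \|T^{s_{i+1}}x\|/K\ge C^{\,i+1}\|x\|/K$. Crucially $s_{i+1}\le (i+1)n$ combined with $s_{i+1}>N=j$ forces $i+1>j/n$, whence $\|T^{j}x\|\ge C^{\,j/n}\|x\|/K$ uniformly over $x\in F_j$. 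Since $C>1$ and $K$ depends only on $T$ and $n$, choosing $j$ large enough that $C^{\,j/n}>K$ gives $C'=C^{\,j/n}/K>1$, so $(ii)$ holds with power $N=j$ and subspace $F_j$. The two points I would check carefully are the boundary case $N=s_j$ (all gaps equal to $1$), where the even stronger bound $\|T^{N}x\|\ge C^{j}\|x\|$ holds directly, and the claim that each $T^{s_i}x$ stays in the appropriate $F_{j-i}$, which is precisely what the nested intersections $F_j$ were designed to guarantee.
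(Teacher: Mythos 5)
Your proof is correct, and its engine is exactly the paper's: nested preimage subspaces of the form $\bigcap_k T^{-k}E$ (with the same two stability facts about finite codimension), iterated application of the expansion hypothesis along the orbit while the intersection keeps the iterates inside $E$, and comparison of the $x$-dependent exponent with a fixed target power via the constant $K$ bounding $\|T^s\|$ for small $s$. The difference is architectural: the paper proves (iii)$\Rightarrow$(i) in a single pass, fixing an arbitrary $n$, setting $E_n=\bigcap_{k=0}^{n-1}T^{-k}E$, stopping the iteration at the first exponent $k\in[n,n+m)$, and concluding $\|T^nx\|\ge (C^{\lfloor n/m\rfloor}/K)\|x\|$, so the whole family $(C_n,E_n)$ appears at once; you factor through (ii), first manufacturing a single expanding power with constant $>1$ and then bootstrapping via (ii)$\Rightarrow$(i). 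This makes you run the synchronization step twice, but your second step is cleaner because all gaps equal $n$; the trade-off is purely organizational and neither route is more general. Two micro-remarks, neither a real gap: in your boundary case $N=s_j$ the claimed uniform constant $C^{j/n}/K$ needs $K\ge C^{j/n-j}$, hence $K\ge 1$ suffices, and this is automatic since (iii) applied to any nonzero $x\in E$ forces $\|T^k\|\ge C>1$ for some $1\le k\le n$, so in fact $K\ge C$ (the paper sidesteps this by taking $K=\sup_{0\le k<m}\|T^k\|$, which includes $k=0$ and is thus at least $1$); and in the degenerate case $E=\{0\}$ (possible only in finite dimension) condition (iii) is vacuous and (i) holds trivially with $E_n=\{0\}$, so nothing breaks there either.
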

Therefore, if $T$ is a weakly mixing operator on a separable \emph{complex} Banach space, $T$ does not possess any hypercyclic subspace if and only if $T$ satisfies one of these conditions. Condition (ii) is easier to verify than condition (i) and thus particularly useful for applications.

In the third and the fourth sections, we are interested in weighted shifts. In the article \cite{LeonMontes}, a characterization of weighted shifts on the complex space $l^2$ with hypercyclic subspaces is given using spectral theory. Using the ideas of Theorem \ref{thmbanach}, we show how to obtain a characterization of the weighted shifts with a hypercyclic subspace on the (complex or real) spaces $l^p$ and $c_0$ but also on the space of entire functions and on certain Köthe sequence spaces. In 2010, Shkarin has answered a question that had long remained open. He has shown that the differentiation operator $D$ on the space of entire functions possesses a hypercyclic subspace (see \cite{Shkarin}). As this operator can be seen as a weighted shift, our result improves that of Shkarin by determining which weighted shifts on the space of entire functions possess a hypercyclic subspace. In fact, that seems be the first time that we can determine which operators possess a subspace hypercyclic for a class of operators on a Fréchet space.

To finish, we consider functions of weighted shifts in the fifth section. In particular, we show that if $P$ is a non-constant polynomial then $P(D)$ possesses a hypercyclic subspace. This result completes Petersson's result that states that if $\phi$ is an entire function of exponential type that is not a polynomial then $\phi(D)$ has a hypercyclic subspace (see \cite{Petersson}).

\section[Hypercyclic subspaces on a Fréchet space]{Hypercyclic subspaces for a sequence of operators on a Fréchet space with a continuous norm}

The results on hypercyclic subspaces of Le\'on and Müller in their article \cite{Leon} are stated for sequences of operators between Banach spaces satisfying a certain condition $(C)$ (see Definition \ref{def C ban}). This condition can be generalized to sequences of operators on Fréchet spaces: 

\begin{definition}\label{def C fre}
Let $X$ be a Fréchet space and $Y$ a topological vector space. A sequence $(T_n)\subset L(X,Y)$ satisfies \emph{condition} (C) if there exist an increasing sequence $(n_k)$ of positive integers and a dense subset $X_0\subset X$ such that
\begin{enumerate}
\item $T_{n_k}x\rightarrow 0$ for every $x\in X_0$;
\item for every continuous seminorm $p$ on $X$, $\bigcup_k T_{n_k}(\{x\in X: p(x)<1\})$ is dense in $Y$.
\end{enumerate}
\end{definition}
\begin{remark}
If $(p_n)$ is an increasing sequence of seminorms defining the topology of $X$ then the second condition is equivalent to having that for every $n$, $\bigcup_k T_{n_k}(\{x\in X: p_n(x)<1\})$ is dense in $Y$.
\end{remark}
\begin{remark}
In the case of a sequence of operators between Banach spaces, Definition \ref{def C fre} is obviously identical to Definition \ref{def C ban}. Moreover, if $T$ is an operator on a separable Fréchet space, it is evident that if $T$ satisfies the Hypercyclicity Criterion for $(n_k)$ then $T$ satisfies condition $(C)$ for $(n_k)$. In fact, one can show by adapting the ideas in \cite{Leon} to Fréchet spaces that $T$ satisfies condition $(C)$ if and only if $T$ satisfies the Hypercyclicity Criterion.
\end{remark}

To generalize the results on hypercyclic subspaces of Le\'on and Müller to sequences of operators on Fréchet spaces with a continuous norm, we need some results on basic sequences in Fréchet spaces with a continuous norm.

\begin{definition}
A sequence $(e_k)_{k\ge 1}$ in a Fréchet space is called \emph{basic} if for every $x\in \overline{\text{span}}\{e_k:k\ge 1\}$, there exists a unique sequence $(a_k)_{k\ge1}$ in $\mathbb{K}$ ($\mathbb{K}=\mathbb{R}$ or $\mathbb{C}$) such that $x=\sum_{k=1}^{\infty}a_ke_k$.
\end{definition}

Let $X$ be a Fréchet space with a continuous norm. The existence of such a norm implies the existence of an increasing sequence of norms $(p_n)$ defining the topology of $X$. Using the fact that $X_n:=(X,p_n)$ is a normed space, we can deduce some results for basic sequences on $X$ from results for basic sequences in a Banach space. For example, if a sequence $(e_k)_{k\ge 1}\subset X$ is basic in $X_n$ for any $n\ge 1$ then this sequence is also basic in $X$. By a basic sequence in $X_n$ we mean a sequence that is basic in the completion of $X_n$. This technique has been used by Petersson in his article \cite{Petersson} and by the author in \cite{Menet}.

\begin{lemma}[\cite{Menet}]\label{lem basic}
Let $X$ be a Fréchet space with a continuous norm, $(p_n)$ an increasing sequence of norms defining the topology of $X$ and $(\varepsilon_n)_{n\ge 1}$ a sequence of positive numbers with $\prod_{n} (1+\varepsilon_n)=K<\infty$.
If $(e_k)_{k\ge 1}$ is a sequence of nonzero vectors in $X$ such that for any $n\ge 1$, for any $j\le n$, for any $a_1,\ldots,a_{n+1}\in \mathbb{K}$,
\[p_j\Big(\sum_{k=1}^n a_k e_k\Big)\le (1+\varepsilon_n)p_j\Big(\sum_{k=1}^{n+1}a_k e_k\Big)\]
then this sequence is basic in $X_n$ for any $n\ge 1$ and thus in $X$; moreover for any $n\ge 1$, the sequence $(e_k)_{k\ge n}$ is basic in $X_n$ with basic constant less than $K$.
\end{lemma}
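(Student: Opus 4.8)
The plan is to reduce the Fréchet-space statement to a standard Banach-space fact about basic sequences, exploiting the increasing sequence of norms $(p_n)$. The key observation is that the hypothesis is a nesting-stability inequality: adding a further term $a_{n+1}e_{n+1}$ can only increase $p_j$ of a partial sum, up to the factor $(1+\varepsilon_n)$. This is precisely the shape of the classical criterion (in the spirit of the Banach-space lemma of Grosse-Erdmann–Peris or the usual ``$\|\sum_{k\le n}a_ke_k\|\le K\|\sum_{k\le m}a_ke_k\|$ for $n\le m$'' characterization) guaranteeing that a sequence of nonzero vectors is basic with basic constant at most $K$. Since $X_n=(X,p_n)$ is a normed space, I would fix $n$ and work in its completion $\widehat{X_n}$, where the $p_n$-inequalities become genuine norm inequalities.

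First I would fix $n\ge 1$ and $j=n$, and iterate the hypothesis: for $n\le m$ and scalars $a_1,\dots,a_m$, telescoping the inequalities for each intermediate index gives
\[
p_n\Big(\sum_{k=1}^{n}a_ke_k\Big)\le\prod_{i=n}^{m-1}(1+\varepsilon_i)\;p_n\Big(\sum_{k=1}^{m}a_ke_k\Big)\le K\,p_n\Big(\sum_{k=1}^{m}a_ke_k\Big),
\]
using $\prod_i(1+\varepsilon_i)=K$. This is the standard sufficient condition for $(e_k)_{k\ge1}$ to be a basic sequence in the Banach space $\widehat{X_n}$ with basic constant at most $K$: the partial-sum projections are uniformly bounded by $K$, and nonzeroness of the $e_k$ together with the uniform bound gives a well-defined Schauder basis of the closed span. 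Next I would invoke the cited fact from the surrounding text that a sequence which is basic in $X_n$ for every $n$ is automatically basic in $X$; this handles the ``basic in $X$'' conclusion and follows because convergence and uniqueness of coefficients can be tested at each norm level $p_n$.

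For the final clause about the shifted sequence $(e_k)_{k\ge n}$, I would apply the same telescoping argument but starting the products at index $n$ rather than $1$: for $n\le r\le m$ one obtains $p_n(\sum_{k=n}^{r}a_ke_k)\le\prod_{i=r}^{m-1}(1+\varepsilon_i)\,p_n(\sum_{k=n}^{m}a_ke_k)\le K\,p_n(\sum_{k=n}^{m}a_ke_k)$, which is exactly the uniform partial-sum bound certifying that $(e_k)_{k\ge n}$ is basic in $X_n$ with basic constant less than $K$. The hypothesis is stated for all $j\le n$, but only $j=n$ is needed here since $X_n$ carries the norm $p_n$; the finer information for smaller $j$ is what makes the whole sequence compatible across the nested normed spaces.

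The main obstacle I anticipate is purely bookkeeping rather than conceptual: making precise the passage to the completion $\widehat{X_n}$ (the vectors $e_k$ lie in $X$, and one must check the basic-sequence criterion in the abstract completion where arbitrary $p_n$-limits exist) and verifying that the ``basic in every $X_n$ implies basic in $X$'' step genuinely yields the uniqueness of coefficients demanded by the definition. Since Lemma \ref{lem basic} is attributed to \cite{Menet}, the cleanest route is to cite the Banach-space basic-sequence criterion and the $X_n$-to-$X$ transfer principle described in the paragraph preceding the lemma, and simply record the telescoping estimates above as the verification of their hypotheses; no delicate analysis beyond the finite products $\prod(1+\varepsilon_i)\le K$ is required.
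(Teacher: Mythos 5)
Your overall route is the right one, and it is exactly the reduction the paper sketches in the paragraph preceding the lemma (the paper itself gives no proof, importing the statement from \cite{Menet}): telescope the hypothesis into a Grunblum-type uniform bound on partial-sum projections, verify the criterion in the completion of each $X_n=(X,p_n)$, and then use the transfer principle that a sequence basic in every $X_n$ is basic in $X$. Your treatment of the tail clause is correct: with $a_1=\cdots=a_{n-1}=0$, each step from length $i$ to $i+1$ is legitimate in the norm $p_n$ because $i\ge r\ge n$, and the partial products $\prod_{i=r}^{m-1}(1+\varepsilon_i)$ stay below $K$.

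There is, however, a genuine gap in your first step: the claim that the telescoped inequality shows $(e_k)_{k\ge 1}$ is basic in $\widehat{X_n}$ \emph{with basic constant at most $K$}. The hypothesis only provides the inequality $p_j(\sum_{k=1}^{i}a_ke_k)\le(1+\varepsilon_i)p_j(\sum_{k=1}^{i+1}a_ke_k)$ for $j\le i$, so in the norm $p_n$ you can only compare partial sums of lengths $r$ and $m$ when $n\le r\le m$. Grunblum's criterion requires a uniform bound on \emph{all} initial projections, including $r<n$, and nothing you have written bounds $p_n\big(\sum_{k=1}^{r}a_ke_k\big)$ by a multiple of $p_n\big(\sum_{k=1}^{m}a_ke_k\big)$ when $r<n$; indeed no bound by $K$ need hold, which is precisely why the lemma asserts a basic constant only for the tail $(e_k)_{k\ge n}$ and merely basicness for the full sequence in $X_n$. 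The fix is standard but must be said: for $r<n$, telescope in $p_1$ (valid from length $1$ on) to get $p_1(\sum_{k=1}^{r}a_ke_k)\le K\,p_1(\sum_{k=1}^{m}a_ke_k)\le K\,p_n(\sum_{k=1}^{m}a_ke_k)$, and then use that $p_n$ and $p_1$ are equivalent on the finite-dimensional space $\mathrm{span}(e_1,\dots,e_{n-1})$ to convert this into a $p_n$-bound with a constant $C_nK$; alternatively, deduce from the $p_1$-estimate that $e_1,\dots,e_{n-1}$ are linearly independent of the closed span of the tail in $\widehat{X_n}$ (here $p_1(e_k)>0$, i.e.\ the $p_j$ being norms and the $e_k$ nonzero, is essential) and invoke the fact that prepending finitely many such vectors to a basic sequence yields a basic sequence. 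This is also where the hypothesis for the smaller indices $j<n$ actually enters the proof: your remark that ``only $j=n$ is needed'' is true for the tail clause but not for the full-sequence clause.
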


\begin{lemma}[{\cite[Lemma 10.39]{Karl}}]\label{lem karl}
Let $X$ be a Fréchet space, $F$ a finite-dimensional subspace of $X$, $p$ a continuous seminorm on $X$ and $\varepsilon>0$. Then there exists a closed subspace $L$ of finite codimension such that for any $x\in L$ and $y\in F$,
\[p(x+y)\ge \max\Big(\frac{p(x)}{2+\varepsilon},\frac{p(y)}{1+\varepsilon}\Big).\]
\end{lemma}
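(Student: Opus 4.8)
The plan is to realize $L$ as the common kernel of finitely many continuous linear functionals, each dominated by $p$. First I would observe that the two bounds in the conclusion are not independent: if I can produce a closed subspace $L$ of finite codimension with $p(x+y)\ge p(y)/(1+\varepsilon)$ for all $x\in L$ and $y\in F$, then the triangle inequality $p(x)=p((x+y)-y)\le p(x+y)+p(y)\le(2+\varepsilon)\,p(x+y)$ immediately yields $p(x+y)\ge p(x)/(2+\varepsilon)$, so the maximum on the right-hand side is controlled. Hence everything reduces to arranging the single inequality $(1+\varepsilon)\,p(x+y)\ge p(y)$.

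To produce such an $L$ I would look for continuous linear functionals $\phi_1,\dots,\phi_m$ on $X$ with $|\phi_i(z)|\le p(z)$ for every $z\in X$ and with the property that $\max_i|\phi_i(y)|\ge p(y)/(1+\varepsilon)$ for every $y\in F$. Then $L:=\bigcap_{i=1}^m\ker\phi_i$ is closed of codimension at most $m$, and for $x\in L$, $y\in F$ one has $\phi_i(x+y)=\phi_i(y)$, so that $p(x+y)\ge\max_i|\phi_i(x+y)|=\max_i|\phi_i(y)|\ge p(y)/(1+\varepsilon)$, as required. The functionals themselves are obtained by a compactness argument: for each point $y_0$ of a (compact) unit sphere of $F$ the Hahn--Banach theorem for seminorms produces a functional $\phi_{y_0}$ with $\phi_{y_0}(y_0)=p(y_0)$ and $|\phi_{y_0}|\le p$, which is automatically continuous since $p$ is; where $p(y_0)>0$ the strict inequality $(1+\varepsilon)|\phi_{y_0}(y)|>p(y)$ defines an open condition satisfied on a neighbourhood of $y_0$, a finite subcover selects $\phi_1,\dots,\phi_m$, and a final homogeneity step extends the estimate from the sphere to all of $F$.

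The main obstacle is that $p$ is only a seminorm, so it may vanish on a nonzero subspace $N:=\{y\in F:p(y)=0\}$ of $F$; at such points the neighbourhood argument above degenerates, since $(1+\varepsilon)|\phi_{y_0}(y)|>p(y)$ need not persist when $p(y_0)=0$. I would resolve this by splitting $F=N\oplus G$, with $p$ restricting to a genuine norm on the finite-dimensional complement $G$, and running the compactness argument on the $p$-unit sphere of $G$ (which is compact) rather than of $F$. The point that makes this transfer cleanly is that any $\phi_i$ with $|\phi_i|\le p$ must vanish on $N$, because $p=0$ there; hence for $y=n+g$ with $n\in N$, $g\in G$ one has $\phi_i(y)=\phi_i(g)$ and $p(y)=p(g)$, so the estimate obtained on $G$ carries over verbatim to all of $F$. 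Checking that each $\phi_i$ is continuous, that $L$ has finite codimension, and that the complex-scalar form of Hahn--Banach applies is then routine.
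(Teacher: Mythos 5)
Your proof is correct: the reduction of the maximum to the single estimate $p(x+y)\ge p(y)/(1+\varepsilon)$ via the triangle inequality, the Hahn--Banach functionals dominated by $p$, the compactness argument on the $p$-unit sphere of a complement $G$ of $N=F\cap\ker p$, and the observation that any functional dominated by $p$ vanishes on $N$ are all sound, and $L=\bigcap_i\ker\phi_i$ has the required properties. The paper itself does not prove this lemma but cites it from Grosse-Erdmann and Peris, and your argument is essentially the standard one used there, with the degeneracy of the seminorm handled by the splitting $F=N\oplus G$ rather than by passing to the local Banach space associated with $X/\ker p$ --- an equivalent device.
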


With these two lemmas we can easily construct recursively a basic sequence $(e_n)_{n\ge 1}$ in an infinite-dimensional Fréchet space with a continuous norm and we can even choose each vector $e_n$ in an infinite-dimensional subspace.

\begin{lemma}\label{exist basic}
Let $X$ be an infinite-dimensional Fréchet space with a continuous norm, $(p_n)$ an increasing sequence of norms defining the topology of $X$, $(M_n)$ a sequence of infinite-dimensional subspaces and a number $K>1$. Then there exists a basic sequence $(e_n)_{n\ge 1}$ in $X$ such that for any $n\ge1$ we have $p_1(e_n)=1$, $e_n\in M_n$ and the sequence $(e_k)_{k\ge n}$ is basic in $X_n$ with basic constant less than $K$. 
\end{lemma}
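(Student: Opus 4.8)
The plan is to build the basic sequence $(e_n)$ recursively, using Lemma~\ref{lem karl} at each step to guarantee the inequality hypothesis of Lemma~\ref{lem basic}. First I would fix a sequence of positive numbers $(\varepsilon_n)$ with $\prod_n(1+\varepsilon_n)=K$; this is possible since $K>1$. The idea is that the inequality
\[
p_j\Big(\sum_{k=1}^n a_k e_k\Big)\le (1+\varepsilon_n)\,p_j\Big(\sum_{k=1}^{n+1}a_k e_k\Big)
\]
required by Lemma~\ref{lem basic} (for all $j\le n$ and all scalars) is exactly the kind of ``near-monotonicity'' estimate that Lemma~\ref{lem karl} produces when we append one new vector to the span of the previous ones, provided the new vector $e_{n+1}$ lies in a suitably chosen subspace $L$ of finite codimension. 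So the recursion should be: choose $e_{n+1}$ both inside the infinite-dimensional subspace $M_{n+1}$ and inside the finite-codimensional subspace $L$ coming from Lemma~\ref{lem karl}, which is possible because their intersection remains infinite-dimensional (a finite-codimensional subspace meets any infinite-dimensional subspace nontrivially).

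The base case is to pick any $e_1\in M_1$ with $p_1(e_1)=1$, which exists since $M_1$ is infinite-dimensional and $p_1$ is a norm. For the inductive step, suppose $e_1,\dots,e_n$ have been constructed satisfying $p_1(e_k)=1$, $e_k\in M_k$, and the near-monotonicity inequality for all indices up to $n-1$. Let $F=\mathrm{span}\{e_1,\dots,e_n\}$, a finite-dimensional subspace. I would apply Lemma~\ref{lem karl} once for each norm $p_j$ with $j\le n$ — or, more efficiently, apply it a single time with the norm $p_n$ (the largest among $p_1,\dots,p_n$) and a small parameter $\delta$ chosen so that $\frac{2+\delta}{1}\le 1+\varepsilon_n$ after rearranging, obtaining a finite-codimensional closed subspace $L$. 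The precise algebra is that Lemma~\ref{lem karl} gives, for $x\in L$ and $y\in F$,
\[
p_j(x+y)\ge \frac{p_j(y)}{1+\delta},
\]
and writing $y=\sum_{k=1}^n a_k e_k$, $x=a_{n+1}e_{n+1}$ with $e_{n+1}\in L$, this rearranges into the desired bound once $\delta$ is chosen small relative to $\varepsilon_n$. Then I would take $e_{n+1}$ in the infinite-dimensional set $M_{n+1}\cap L$, normalized so that $p_1(e_{n+1})=1$.

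The main obstacle, and the point requiring the most care, is matching the constant in Lemma~\ref{lem karl} (which compares $p(x+y)$ to $p(y)$ with factor $1/(1+\delta)$) to the exact form of the hypothesis of Lemma~\ref{lem basic} (which compares the partial sum through $n$ to the partial sum through $n+1$). These are the same type of estimate, but one must verify that the inequality holds simultaneously for every $j\le n$ and every choice of scalars $a_1,\dots,a_{n+1}$, not just for a fixed decomposition. Since $p_n\ge p_j$ for $j\le n$, applying the lemma with $p_n$ controls all the smaller norms at once, so a single application per step suffices; the scalar $a_{n+1}$ can be absorbed into $x$ because $L$ is a subspace. Once the inequality is secured for all $n$, Lemma~\ref{lem basic} applies verbatim and yields that $(e_n)$ is basic in $X$ and that each tail $(e_k)_{k\ge n}$ is basic in $X_n$ with basic constant less than $K$, which is exactly the conclusion.
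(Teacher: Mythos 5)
Your overall skeleton is exactly the paper's intended argument --- the paper gives no detailed proof of Lemma~\ref{exist basic}, remarking only that it follows easily by recursion from Lemmas~\ref{lem basic} and~\ref{lem karl}, and your recursion (choose $e_{n+1}$ in $M_{n+1}$ intersected with a finite-codimensional subspace supplied by Lemma~\ref{lem karl}, normalize by $p_1$, then invoke Lemma~\ref{lem basic}) is that argument. However, the shortcut you settle on in your final paragraph contains a genuine error: the claim ``since $p_n\ge p_j$ for $j\le n$, applying the lemma with $p_n$ controls all the smaller norms at once, so a single application per step suffices'' is false. The estimate $p_n(y)\le(1+\delta)\,p_n(x+y)$ for the largest norm does \emph{not} imply $p_j(y)\le(1+\delta)\,p_j(x+y)$ for $j<n$: monotonicity of the norms bounds $p_j(y)$ by $p_n(y)$, but there is no way to get from $p_n(x+y)$ back down to $p_j(x+y)$. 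Concretely, $x=a_{n+1}e_{n+1}$ may nearly cancel $y$ in the norm $p_1$ (so $p_1(x+y)$ is tiny while $p_1(y)=1$) and simultaneously have enormous $p_n$-norm, so that the $p_n$-inequality holds trivially while the $p_1$-inequality fails badly. Two-sided near-monotonicity bounds of this kind do not transfer down an increasing scale of norms. (A smaller slip of the same flavor: your condition ``$\frac{2+\delta}{1}\le 1+\varepsilon_n$'' is both unsatisfiable and irrelevant --- the factor $2+\varepsilon$ in Lemma~\ref{lem karl} concerns $p(x)$, which plays no role here; taking $\delta=\varepsilon_n$ gives the needed inequality directly, with no rearranging.)

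The repair is the alternative you mention only in passing, and it must be the actual argument: at step $n$, apply Lemma~\ref{lem karl} once for \emph{each} $j\le n$, with $F=\mathrm{span}\{e_1,\dots,e_n\}$, seminorm $p_j$ and parameter $\varepsilon_n$, obtaining closed finite-codimensional subspaces $L_1,\dots,L_n$, and set $L=\bigcap_{j\le n}L_j$, which is again closed of finite codimension. Then $M_{n+1}\cap L$ is infinite-dimensional, so it contains a vector $e_{n+1}$ with $p_1(e_{n+1})=1$, and for every $j\le n$ and all scalars $a_1,\dots,a_{n+1}$, writing $y=\sum_{k=1}^n a_ke_k\in F$ and $x=a_{n+1}e_{n+1}\in L$ (the scalar is absorbed because $L$ is a subspace, as you note), Lemma~\ref{lem karl} gives $p_j(y)\le(1+\varepsilon_n)\,p_j(x+y)$, which is verbatim the hypothesis of Lemma~\ref{lem basic}. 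With $\prod_n(1+\varepsilon_n)=K$ this yields the conclusion, including that each tail $(e_k)_{k\ge n}$ is basic in $X_n$ with basic constant less than $K$. With this one-line correction --- finitely many applications of Lemma~\ref{lem karl} per step instead of one --- your proof is complete and coincides with the paper's.
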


An important notion for basic sequences is the equivalence between two basic sequences.

\begin{definition}
Let $X$ be a Fréchet space.
Two basic sequences $(e_n)_n$ and $(f_n)_n$ in $X$ are called \emph{equivalent} if for every sequence $(a_n)_{n\ge 1}$ in $\mathbb{K}$, the series $\sum_{n=1}^{\infty} a_n e_n$ converges in $X$ if and only if $\sum_{n=1}^{\infty} a_n f_n$ converges in $X$.
\end{definition}

Theorem V.9 in \cite{Diestel} for Banach spaces implies the following lemma for Fréchet spaces with a continuous norm:

\begin{lemma}[\cite{Menet}]\label{lem equiv}
Let $X$ be a Fréchet space with a continuous norm and $(p_n)_n$ an increasing sequence of norms defining the topology of $X$. Suppose that $(e_n)_{n\ge 1}$ is a basic sequence in $X$ such that for any $n\ge 1$, we have $p_1(e_n)=1$ and the sequence $(e_k)_{k\ge n}$ is basic in $X_n$ with basic constant less than $K$. If $(f_k)_{k\ge 1}\subset X$ is a sequence satisfying \[\sum_{n=1}^{\infty}2Kp_n(e_n-f_n)<1\]
then $(f_k)_{k\ge 1}$ is a basic sequence in $X$ and in $X_n$ for any $n\ge1$.
Moreover the sequences $(e_k)_{k\ge 1}$ and $(f_k)_{k\ge 1}$ are equivalent in $X$ and in $X_n$ for any $n\ge1$.
\end{lemma}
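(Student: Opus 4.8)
The plan is to reduce everything to the classical small--perturbation principle for basic sequences in Banach spaces (Theorem V.9 in \cite{Diestel}), applied inside each completion $\widetilde{X_n}$ of the normed space $X_n=(X,p_n)$, and then to reassemble the conclusions in $X$ using two facts recalled earlier: a sequence that is basic in every $X_n$ is basic in $X$, and a series converges in $X$ if and only if its partial sums are $p_n$-Cauchy (hence converge in $\widetilde{X_n}$) for every $n$. The role of the single diagonal hypothesis $\sum_n 2K\,p_n(e_n-f_n)<1$ is to feed, simultaneously, the summability condition that the Banach perturbation theorem requires in each $\widetilde{X_n}$.

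First I would fix $n$ and work in $\widetilde{X_n}$. By hypothesis $(e_k)_{k\ge n}$ is basic in $X_n$ with basic constant $c<K$, so its coefficient functionals $e_k^{*}$ obey the standard estimate $\|e_k^{*}\|\,p_n(e_k)\le 2c<2K$; since the norms increase and $p_1(e_k)=1$ we have $p_n(e_k)\ge 1$, whence $\|e_k^{*}\|\le 2K$ for every $k\ge n$. The key computation is then that, for $k\ge n$, the increasing norms give $p_n(e_k-f_k)\le p_k(e_k-f_k)$, so that
\[\sum_{k\ge n}\|e_k^{*}\|\,p_n(e_k-f_k)\le \sum_{k\ge n}2K\,p_k(e_k-f_k)\le\sum_{k=1}^{\infty}2K\,p_k(e_k-f_k)<1.\]
Thus the hypothesis is exactly calibrated, and Theorem V.9 in \cite{Diestel} yields that $(f_k)_{k\ge n}$ is basic in $\widetilde{X_n}$ and equivalent there to $(e_k)_{k\ge n}$.

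Next I would upgrade these tail statements to the full sequences and carry them to $X$. For equivalence no work on the initial terms is needed: adding or deleting the finite head $f_1,\dots,f_{n-1}$ does not affect convergence of a series, so $(e_k)_{k\ge 1}$ and $(f_k)_{k\ge 1}$ are equivalent in each $\widetilde{X_n}$ (which is the meaning of equivalence ``in $X_n$''), and since convergence in $X$ amounts to $p_n$-convergence for all $n$, equivalence in $X$ follows. For the basis property of $(f_k)_{k\ge 1}$, the case $n=1$ is immediate, as the hypothesis taken with $n=1$ makes $(e_k)_{k\ge 1}$ itself basic in $X_1$; for $n\ge 2$ I would prepend $f_1,\dots,f_{n-1}$ to the basic tail $(f_k)_{k\ge n}$ in $\widetilde{X_n}$. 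This produces a basic sequence as soon as $f_1,\dots,f_{n-1}$ are linearly independent modulo $\overline{\text{span}}\{f_k:k\ge n\}$ (a finite-dimensional subspace plus a closed subspace in direct sum is closed with bounded projection, which bounds the partial--sum projections). That linear independence I would obtain by pushing any nontrivial relation forward through the continuous inclusion $\widetilde{X_n}\hookrightarrow\widetilde{X_1}$ and contradicting the basis property of $(f_k)_{k\ge 1}$ in $\widetilde{X_1}$ already established. Being basic in every $X_n$, the sequence $(f_k)_{k\ge 1}$ is then basic in $X$.

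The main obstacle is the bookkeeping across the three-layered structure — the incomplete normed spaces $X_n$, their completions $\widetilde{X_n}$, and the Fr\'echet space $X$ — and specifically the passage from the per-$\widetilde{X_n}$, \emph{tail} conclusions of the perturbation theorem to full-sequence statements genuinely in $X$. The smoothest part is the summability verification, where the increasing norms and the normalization $p_1(e_k)=1$ let the one hypothesis fit each $\widetilde{X_n}$ at once; the delicate part is the prepending step for the basis property, where one must control closed spans $\overline{\text{span}}\{f_k:k\ge n\}$ that may contain $p_n$-limits lying outside $X$, which is precisely why the reduction back to $\widetilde{X_1}$ is useful.
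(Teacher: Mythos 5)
Your proposal is correct and takes essentially the same approach as the paper, which states this lemma (quoted from \cite{Menet}) as a consequence of Theorem V.9 in \cite{Diestel} applied in the completions of the normed spaces $X_n=(X,p_n)$: your coefficient-functional bound $\|e_k^*\|\le 2K$ together with $p_n(e_k-f_k)\le p_k(e_k-f_k)$ is exactly how the single diagonal hypothesis $\sum_n 2Kp_n(e_n-f_n)<1$ feeds the Banach-space perturbation theorem in every $\widetilde{X_n}$ at once. Your additional care with the heads $f_1,\dots,f_{n-1}$ (linear independence modulo the closed tail span, verified by pushing a relation down to $\widetilde{X_1}$) and the passage from per-$X_n$ statements to $X$ via completeness correctly fills in the details the paper leaves implicit.
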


Now we can extend the results on hypercyclic subspaces in the article \cite{Leon} to Fréchet spaces with a continuous norm.
For the rest of this section we suppose that $X$ is an infinite-dimensional Fréchet space with a continuous norm, $Y$ is a separable Fréchet space and $(T_n)$ is a sequence of continuous linear operators from $X$ to $Y$.

\begin{theorem}\label{thm M0}
If $(T_n)$ satisfies condition $(C)$ for a sequence $(n_k)$ and if there exists an infinite-dimensional closed space $M_0$ of $X$ such that $T_{n_k}x\rightarrow 0$ for all $x\in M_0$ then $(T_n)$ possesses a hypercyclic subspace.
\end{theorem}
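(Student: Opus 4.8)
The plan is to adapt the construction of Le\'on and M\"uller to the Fr\'echet setting, realizing the hypercyclic subspace as the closed linear span of a basic sequence obtained by perturbing a basic sequence that lies inside $M_0$. Fix a sequence $(y_j)_{j\ge1}$ that is dense in $Y$, fix $K>1$, and apply Lemma \ref{exist basic} with $M_n:=M_0$ for every $n$ to obtain a basic sequence $(e_k)_{k\ge1}\subset M_0$ with $p_1(e_k)=1$ and with $(e_k)_{k\ge n}$ basic in $X_n$ of basic constant less than $K$; the normalization yields a uniform bound $|a_k|\le C_x$ on the coordinates of any fixed $x=\sum_k a_k e_k$. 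The crucial feature of this choice is that, since $M_0$ is closed and $T_{n_k}z\to0$ for every $z\in M_0$, the entire unperturbed part of any vector of the final subspace will be annihilated along $(n_k)$.

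Next I would construct perturbations $f_k=e_k+u_k$ with $\sum_k 2Kp_k(u_k)<1$, so that by Lemma \ref{lem equiv} the sequence $(f_k)$ is basic and equivalent to $(e_k)$ in $X$; then $M:=\overline{\text{span}}\{f_k:k\ge1\}$ is an infinite-dimensional closed subspace and every $x\in M$ has the form $x=\sum_k a_k f_k$ with $\sum_k a_k e_k\in M_0$. Each $u_k$ is built as a convergent series $u_k=\sum_{s:\,N_s=k}\delta_s$ indexed by an enumeration of tasks $s\mapsto(N_s,y_{j_s})$ in which every pair (coordinate, target) occurs infinitely often. I process the tasks one at a time: at step $s$, having fixed finitely many increments and times $\nu_1<\dots<\nu_{s-1}$ drawn from $(n_k)$, I use condition $(C)(2)$ with the seminorm replaced by a large multiple of $p_{q_s}$ (with $q_s\ge N_s$ large) to find an index $\nu_s$, which I take as large as I wish, and a vector $u$ with $p_{q_s}(u)$ tiny and $T_{\nu_s}u$ close to $y_{j_s}$. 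Approximating $u$ by a vector $\delta_s\in X_0$—possible since $X_0$ is dense and only the finitely many operators $T_{\nu_1},\dots,T_{\nu_s}$ and one seminorm are involved—I keep $p_{q_s}(\delta_s)$ tiny, keep $T_{\nu_s}\delta_s$ close to $y_{j_s}$, and in addition force $T_{\nu_r}\delta_s$ to be negligible for every $r<s$. Here both parts of condition $(C)$ are used: part $(2)$ realizes the target, while part $(1)$, together with the freedom to take $\nu_s$ large, guarantees that the finitely many earlier increments, all of which lie in $X_0$, satisfy $T_{\nu_s}\delta_{s'}\approx0$.

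With these choices each increment is negligible at every foreign time: if $N_{s'}\ne N_s$ then $T_{\nu_s}\delta_{s'}$ is small, for $s'<s$ because $\delta_{s'}\in X_0$ and $\nu_s$ is large, and for $s'>s$ because $\delta_{s'}$ was chosen small against all earlier times. Consequently, for a fixed nonzero $x=\sum_k a_k f_k\in M$ and any index $N$ with $a_N\ne0$, at each task $s$ with $N_s=N$ I would compute
\[ T_{\nu_s}x=T_{\nu_s}\Big(\sum_k a_k e_k\Big)+\sum_k a_k T_{\nu_s}u_k=o(1)+a_N T_{\nu_s}u_N+\sum_{k\ne N}a_k T_{\nu_s}u_k, \]
where the first term tends to $0$ because $\sum_k a_k e_k\in M_0$, the sum over $k\ne N$ is small by the negligibility above and the bound $|a_k|\le C_x$, and $T_{\nu_s}u_N\approx T_{\nu_s}\delta_s\approx y_{j_s}$. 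Thus $T_{\nu_s}x$ is close to $a_N y_{j_s}$; since the targets $y_{j_s}$ attached to the tasks with $N_s=N$ are dense in $Y$ and $a_N\ne0$, the points $a_N y_{j_s}$ are dense as well, so the orbit of $x$ along $(\nu_s)$, hence along $(T_n)$, is dense. Therefore every nonzero vector of $M$ is hypercyclic and $M$ is a hypercyclic subspace.

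The routine parts are the seminorm bookkeeping—choosing each $q_s$ large enough to dominate the finitely many earlier operators and to keep $\sum_k 2Kp_k(u_k)<1$ and each series $u_k$ convergent, so that Lemma \ref{lem equiv} applies—and the replacement of $u$ by a nearby element of $X_0$. The main obstacle is the simultaneous control of the infinitely many cross terms $\sum_{k\ne N}a_k T_{\nu_s}u_k$ for \emph{every} nonzero $x$ at once; it is precisely this that forces the interleaved, one-task-at-a-time recursion, and it is overcome by splitting each $u_k$ into increments lying in $X_0$ and choosing each new time $\nu_s$ large, so that all previously built increments are already annihilated, while every future increment is made small against all earlier times. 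I would also need to confirm that condition $(C)$ lets the approximating index $\nu_s$ be taken arbitrarily large, which is the only point where the Hypercyclicity-Criterion character of condition $(C)$ is genuinely used.
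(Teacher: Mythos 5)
Your proposal is correct and follows essentially the same route as the paper's proof: a basic sequence $(e_k)\subset M_0$ from Lemma \ref{exist basic}, perturbations by elements of $X_0$ realizing a dense set of targets at times drawn from $(n_k)$, equivalence of the perturbed sequence via Lemma \ref{lem equiv}, the split $T_{\nu}x=T_{\nu}\bigl(\sum_k a_k e_k\bigr)+\sum_k a_k T_{\nu}u_k$ with the first term vanishing since $\sum_k a_k e_k\in M_0$, and cross-terms controlled exactly as in the paper (new times annihilate old increments by condition $(C)(1)$, new increments made small at old times); your linear task enumeration is just the paper's order $\prec$ on $\mathbb{N}\times\mathbb{N}$ in disguise. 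The one point you flag for confirmation --- that condition $(C)(2)$ yields density of $\bigcup_{k\ge N}T_{n_k}(\{x:p(x)<\varepsilon\})$ for every $N$ and $\varepsilon$ --- is asserted (without proof) in the paper as well, and holds by applying $(C)(2)$ to a seminorm dominating a large multiple of one controlling the finitely many discarded operators.
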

\begin{proof}
We consider an increasing sequence $(p_n)$ of norms defining the topology of $X$ and an increasing sequence $(q_n)$ of seminorms defining the topology of $Y$. By Lemma \ref{exist basic}, there exists a basic sequence $(e_n)_{n\ge 1}$ in $M_0$ such that for every $n\ge 1$, we have $p_1(e_n)=1$ and the sequence $(e_k)_{k\ge n}$ is basic in $X_n$ with basic constant less than $2$. Let $(y_k)_{k\ge 1}$ be a dense sequence in $Y$ and $\prec$ the order on $\mathbb{N}\times\mathbb{N}$ defined by $(i,j)\prec(i',j')$ if $i+j<i'+j'$ or if $i+j=i'+j'$ and $i<i'$.

We construct a family $(z_{i,j})_{i,j\ge 1}\subset X_0$ where $X_0$ is the dense set given by condition $(C)$ and a family $(n_{i,j})_{i,j\ge 1}\subset \mathbb{N}$ such that for any $i\ge 1$, $(n_{i,j})_{j\ge 1}$ is a subsequence of $(n_k)$. If $z_{i',j'}$ and $n_{i',j'}$ are already constructed for every $(i',j')\prec(i,j)$ then we choose $z_{i,j}\in X_0$ and $n_{i,j}\in (n_k)$ with $n_{i,j}>\max\{n_{i',j'}:(i',j')\prec(i,j)\}$ such that for any $(i',j')\prec(i,j)$,
\begin{gather}
q_{i+j}(T_{n_{i,j}}z_{i',j'})<\frac{1}{2^{i'+j'+j}}, \label{n grand}\\
q_{i+j}(T_{n_{i',j'}}z_{i,j})<\frac{1}{2^{i+j+j'}},  \label{T z petit}\\
q_{i+j}(T_{n_{i,j}}z_{i,j}-y_j)<\frac{1}{2^{j}} \text{ and } p_{i+j}(z_{i,j})<\frac{1}{2^{i+j+2}}\label{z petit}.
\end{gather}
Satisfying all these assertions is possible because if for every continuous seminorm $p$ on $X$, $\bigcup_k T_{n_k}(\{x\in X: p(x)<1\})$ is dense in $Y$ then for every continuous seminorm $p$ on $X$, for every $\varepsilon>0$, for every $N\ge 1$, the set $\bigcup_{k\ge N} T_{n_k}(\{x\in X: p(x)<\varepsilon\})$ is also dense in $Y$.

We define, for any $i\ge 1$,
\[z_i:=e_i+\sum_{j=1}^{\infty} z_{i,j}.\]
By \eqref{z petit}, these series are convergent and using Lemma \ref{lem equiv}, we deduce that the sequence $(z_i)_{i\ge 1}$ is a basic sequence equivalent to $(e_i)_{i\ge 1}$ in $X$. 
Let $M$ be the closed linear span of $(z_i)$ and $z\in M\backslash\{0\}$. We need to show that $z$ is hypercyclic for the sequence $(T_n)$. Since $(z_i)$ is a basic sequence, we know that $z=\sum_{i=1}^{\infty}a_i z_i$ and we can suppose that $a_{k}=1$ for some $k$. By equivalence between the basic sequences $(z_i)_i$  and $(e_i)_i$, we deduce that $\sum_{i=1}^{\infty}a_i e_i$ also converges. There thus exists $K>0$ such that for any $i\ge 1$, we have $|a_i|\le K$. Indeed, as the sequence $(e_i)_{i\ge 1}$ is basic in $X_1$ with basic constant less than $2$ and $p_1(e_n)=1$, we have
\begin{equation}
|a_i|=p_1(a_i e_i)\le p_1\Big(\sum_{l=1}^{i}a_l e_l\Big)+ p_1\Big(\sum_{l=1}^{i-1}a_l e_l\Big)\le 4p_1\Big(\sum_{l=1}^{\infty}a_l e_l\Big).
\label{ak maj}
\end{equation}
Let $n\ge 1$ and $r\ge n$. We have, using \eqref{n grand}, \eqref{T z petit} and \eqref{z petit},
\begin{align*}
q_n(T_{n_{k,r}}z-y_r)&\le \sum_{(i,j)\prec(k,r)} |a_i|q_n(T_{n_{k,r}}z_{i,j}) + \sum_{(i,j)\succ(k,r)} |a_i|q_n(T_{n_{k,r}}z_{i,j})\\
&\quad + q_n(T_{n_{k,r}}z_{k,r}-y_r) + q_n\Big(T_{n_{k,r}}\Big(\sum_{i\ge1} a_ie_i\Big)\Big)\\
&\le\sum_{(i,j)\prec(k,r)} K q_{k+r}(T_{n_{k,r}}z_{i,j})+\sum_{(i,j)\succ(k,r)} K q_{i+j}(T_{n_{k,r}} z_{i,j})\\
&\quad+ q_n(T_{n_{k,r}}z_{k,r}-y_r) + q_n\Big(T_{n_{k,r}}\Big(\sum_{i\ge 1} a_ie_i\Big)\Big)\\
&\le \sum_{(i,j)\ne(k,r)} K \frac{1}{2^{i+j+r}} + \frac{1}{2^r} +q_n\Big(T_{n_{k,r}}\Big(\sum_{i\ge 1} a_ie_i\Big)\Big)\\
&\le \frac{K+1}{2^{r}}+ q_n\Big(T_{n_{k,r}}\Big(\sum_{i\ge 1} a_ie_i\Big)\Big) \underset{r\rightarrow \infty}{\longrightarrow} 0 \quad \text{because $\sum_{i\ge 1} a_ie_i\in M_0$.}
\end{align*}
\end{proof}

\begin{theorem}\label{thm M}
Let $(p_n)$, $(q_n)$ be sequences of seminorms defining the topology of $X$, $Y$.
If $(T_n)$ satisfies condition $(C)$ for a sequence $(n_k)$ and if there exists a non-increasing sequence of infinite-dimensional closed spaces $(M_j)_{j\ge 1}$ of $X$ such that for every $n\ge 1$, there exist a positive number $C_n$ and two integers $m(n),k(n)\ge 1$ such that we have  for any $j\ge k(n)$, for any $x\in M_j$,
\[q_n(T_{n_j}x)\le C_n p_{m(n)}(x)\]
then $(T_n)$ possesses a hypercyclic subspace.
\end{theorem}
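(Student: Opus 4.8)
The plan is to follow the architecture of the proof of Theorem \ref{thm M0} as closely as possible, the only structural change being that the underlying basic sequence is now spread across the nested subspaces $(M_j)$ rather than sitting inside a single $M_0$, and that the boundedness hypothesis replaces the decay $T_{n_k}x\to 0$ in the final estimate. Concretely, I would fix the increasing sequences $(p_n)$, $(q_n)$ as in the statement and apply Lemma \ref{exist basic} to the sequence of infinite-dimensional subspaces $(M_n)$ to obtain a basic sequence $(e_n)_{n\ge1}$ with $e_n\in M_n$, $p_1(e_n)=1$, and with $(e_k)_{k\ge n}$ basic in $X_n$ with basic constant less than $2$. Exactly as before, I would then construct recursively, over the order $\prec$, a family $(z_{i,j})\subset X_0$ of vectors from the dense set given by condition $(C)$ together with times $n_{i,j}\in(n_k)$, set $z_i:=e_i+\sum_j z_{i,j}$, and use Lemma \ref{lem equiv} to conclude that $(z_i)$ is a basic sequence equivalent to $(e_i)$; the candidate hypercyclic subspace is $M:=\overline{\text{span}}\{z_i:i\ge1\}$.

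The recursion must impose the same smallness conditions \eqref{n grand}, \eqref{T z petit}, \eqref{z petit} guaranteeing that the perturbation series converge, that the off-diagonal cross terms $a_iT_{n_{k,r}}z_{i,j}$ are negligible, and that the diagonal term $T_{n_{k,r}}z_{k,r}$ approximates the target $y_r$. In addition, since the pointwise decay of $T_{n_k}$ on the $e_i$ is no longer available, I would add a further condition forcing the images $T_{n_{i,j}}e_l$ of the already-fixed basic vectors to be small, say $q_{i+j}(T_{n_{i,j}}e_l)<2^{-(i+j+l)}$ for all $l$ below the position of $n_{i,j}$ in $(n_k)$. Whether such times can always be selected is precisely the delicate point discussed below.

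For the verification that every $z=\sum_i a_iz_i\in M\setminus\{0\}$, normalised so that $a_k=1$ for some $k$ and with $|a_i|\le K$ by \eqref{ak maj}, is hypercyclic, I would reproduce the computation of $q_n(T_{n_{k,r}}z-y_r)$ from Theorem \ref{thm M0}. The diagonal and cross terms are handled by \eqref{n grand}--\eqref{z petit} exactly as there, leaving the single extra contribution $q_n\big(T_{n_{k,r}}(\sum_i a_ie_i)\big)$. Writing $n_{k,r}=n_\Lambda$, I would split this sum at the index $\Lambda$. The tail $\sum_{i\ge\Lambda}a_ie_i$ lies in $M_\Lambda$ because $e_i\in M_i\subset M_\Lambda$ for $i\ge\Lambda$, so for $\Lambda\ge k(n)$ the boundedness hypothesis yields $q_n\big(T_{n_\Lambda}\sum_{i\ge\Lambda}a_ie_i\big)\le C_n\,p_{m(n)}\big(\sum_{i\ge\Lambda}a_ie_i\big)$, which tends to $0$ as $\Lambda\to\infty$ since the series converges; this is exactly the step where the new hypothesis enters, replacing the use of $M_0$ in Theorem \ref{thm M0}.

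The main obstacle is the remaining head $T_{n_\Lambda}\big(\sum_{i<\Lambda}a_ie_i\big)$: for $i<\Lambda$ the vector $e_i$ need not lie in $M_\Lambda$, so the boundedness hypothesis says nothing about $T_{n_\Lambda}e_i$, and these are fixed vectors acted on by later and later operators. This term can be controlled only through the extra smallness condition built into the recursion, and arranging that condition is the real difficulty of the proof: one must choose the times $n_{i,j}$ far enough out in $(n_k)$ that $T_{n_{i,j}}$ is simultaneously small on the finitely many previously constructed $e_l$, which is where condition $(C)$, together with a careful and possibly interleaved choice of the $e_l$ near the dense set $X_0$, has to be exploited to compensate for the absence of genuine decay. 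Once the head is shown to vanish as $\Lambda\to\infty$, the three groups of terms combine to give $q_n(T_{n_{k,r}}z-y_r)\to0$, and, choosing the dense sequence $(y_r)$ so that each point of a countable dense set recurs infinitely often, the density of the orbit of $z$ follows, which proves that $M$ is a hypercyclic subspace.
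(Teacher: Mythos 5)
Your overall architecture is sound up to one point: the reduction of everything to the single extra term $q_n\big(T_{n_{k,r}}\big(\sum_i a_ie_i\big)\big)$, and your treatment of the tail $\sum_{i\ge\Lambda}a_ie_i\in M_\Lambda$ via the boundedness hypothesis, are exactly where the new hypothesis should enter. The gap is the head, and it is genuine: the extra recursion condition you propose, $q_{i+j}(T_{n_{i,j}}e_l)<2^{-(i+j+l)}$ for the previously fixed $e_l$, cannot in general be met by pushing $n_{i,j}$ far out in $(n_k)$. The hypotheses provide no decay whatsoever of $T_{n_k}$ at the vectors $e_l$: condition $(C)$ yields $T_{n_k}x\to 0$ only for $x$ in the dense set $X_0$, and a dense set need not intersect the closed subspaces $M_l$ in which the $e_l$ are forced to live; the boundedness hypothesis applies to $e_l$ only at times $n_j$ with $k(n)\le j\le l$ (since $e_l\in M_j$ requires $j\le l$, the sequence being non-increasing), and even there it gives boundedness, not smallness. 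So the sequence $(T_{n_k}e_l)_k$ may well stay bounded away from $0$ in $q_n$, no choice of times rescues the head, and your own caveat (``whether such times can always be selected'') is precisely where the argument breaks.

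The paper resolves this differently: it does not re-run the construction of Theorem \ref{thm M0} but reduces to it by perturbing the basic vectors into the dense set. By continuity one picks $f_j\in X_0$ with $q_j(T_{n_k}e_j-T_{n_k}f_j)<2^{-j}$ for the \emph{finitely many} $k\le j$, and $p_j(e_j-f_j)<2^{-(j+2)}$ (see \eqref{prop f}); since $f_j\in X_0$, genuine decay $T_{n_k}f_j\to 0$ holds along $(n_k)$, so one extracts a subsequence $(n_{k_l})$ along which the heads are small and condition $(C)$ is preserved (see \eqref{prop k}). Setting $M_0=\overline{\text{span}}\{f_{k_l}:l\ge1\}$, with $(f_{k_l})$ basic and equivalent to $(e_{k_l})$ by Lemma \ref{lem equiv}, every $x=\sum_j a_jf_{k_j}\in M_0$ satisfies $T_{n_{k_l}}x\to0$: the head $\sum_{j<l}a_jT_{n_{k_l}}f_{k_j}$ is small by the decay on $X_0$, the difference $\sum_{j\ge l}a_jT_{n_{k_l}}(f_{k_j}-e_{k_j})$ is small by \eqref{prop f} (here $k_l\le k_j$, so only finitely many operators per index were ever needed), and the tail $T_{n_{k_l}}\big(\sum_{j\ge l}a_je_{k_j}\big)$ is handled exactly as in your proposal since $\sum_{j\ge l}a_je_{k_j}\in M_{k_l}$. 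Theorem \ref{thm M0}, applied to $(n_{k_l})$ and $M_0$, then gives the hypercyclic subspace. Your parenthetical suggestion of choosing ``the $e_l$ near the dense set $X_0$'' gestures at this, but the decisive bookkeeping --- approximation required only for finitely many operators, decay supplied by $f_j$ itself at later times, boundedness used only on the $e$-tail --- is the missing content, and without it the proof does not close.
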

\begin{proof}
Without loss of generality, we can suppose that $(p_n)$ is an increasing sequence of norms defining the topology of $X$ and $(q_n)$ is an increasing sequence of seminorms defining the topology of $Y$. We remark that, by using the previous theorem, it is sufficient to construct an infinite-dimentional closed subspace $M_0$ of $X$ and a subsequence $(n_{k_l})$ of $(n_k)$ such that for any $x\in M_0$, we have $\lim T_{n_{k_l}} x=0$ and $(T_n)$ satisfies condition $(C)$ for the sequence $(n_{k_l})$. We thus consider a basic sequence $(e_j)_{j\ge1}$ in $X$ such that we have $p_1(e_j)=1$, $e_j\in M_j$ and for every $n\ge 1$, the sequence $(e_j)_{j\ge n}$ is basic in $X_n:=(X,p_n)$ with basic constant less than $2$. The existence of the sequence $(e_j)$ is guaranteed by Lemma \ref{exist basic}.

Let $(y_k)_{k\ge 1}$ be a dense sequence in $Y$ and $X_0$ the dense set given by condition $(C)$ for the sequence $(n_k)$. By continuity of the operators $T_n$, we can find a sequence $(f_j)\subset X_0$ such that 
for any $j$, for any $k\le j$,
\begin{equation}
q_j(T_{n_k}e_j-T_{n_k}f_j)<\frac{1}{2^{j}} \text{\ and \ } p_j(e_j-f_j)<\frac{1}{2^{j+2}}.
\label{prop f}
\end{equation}
If we denote $d_{q_l}(A,x)=\inf_{a\in A}q_l(a-x)$ then since the sequence $(f_j)$ is included in $X_0$, we can also find an increasing sequence of integers $(k_l)_{l\ge 1}$ such that for any $l\ge 1$, for any $n,j\le k_{l-1}$,
\begin{equation}
q_n(T_{n_{k_l}}f_j)<\frac{1}{2^{l+j}} \ \text{and} \ d_{q_l}(T_{n_{k_l}}(\{x\in X: p_l(x)<1\}),y_l)<\frac{1}{2^l}
\label{prop k}
\end{equation}
where we suppose that $k_0=1$.
Using Lemma \ref{lem equiv} and the second inequality in \eqref{prop f}, we deduce that $(f_{k_l})_{l\ge 1}$ is a basic sequence equivalent to $(e_{k_l})_{l\ge 1}$ in $X$.

Let $M_0$ be the closed linear span of $(f_{k_l})_{l\ge 1}$ in $X$ and $x$ a vector in $M_0$. We know that we have $x=\sum_{k=1}^{\infty} a_j f_{k_j}$ where the sequence $(a_j)$ is bounded by some constant $K$ (see \eqref{ak maj}). Let $n\ge 1$ and $l\ge 2$ with $n\le k_{l-1}$ and $k(n)\le k_l$. Since $\sum_{j=l}^{\infty}a_j e_{k_j}\in M_{k_l}$, we deduce from \eqref{prop f} and \eqref{prop k} that
\begin{align*}
q_n(T_{n_{k_l}}x)&\le \sum_{j=1}^{l-1}|a_j| q_n(T_{n_{k_l}}f_{k_j})
+ \sum_{j=l}^{\infty} |a_j| q_n(T_{n_{k_l}}(f_{k_j}-e_{k_j}))
+ q_n\Big(T_{n_{k_l}}\Big(\sum_{j=l}^{\infty}a_je_{k_j}\Big)\Big)\\
&\le \sum_{j=1}^{l-1} \frac{K}{2^{l+k_j}}
+\sum_{j=l}^{\infty} K q_{k_j}(T_{n_{k_l}}(f_{k_j}-e_{k_j}))
+ C_n p_{m(n)}\Big(\sum_{j=l}^{\infty}a_j e_{k_j}\Big)\\
&\le \frac{lK}{2^{l}}
+ \sum_{j=l}^{\infty} \frac{K}{2^{k_j}}
+ C_n p_{m(n)}\Big(\sum_{j=l}^{\infty}a_j e_{k_j}\Big) \underset{l\rightarrow \infty}{\longrightarrow} 0.
\end{align*}
Since for all $n\ge 1$, we have also by \eqref{prop k} that the set $\bigcup_l T_{n_{k_l}}(\{x\in X: p_n(x)<1\})$ is dense in $Y$ so that $(T_n)$ satisfies condition $(C)$ for the sequence $(n_{k_l})$, we deduce the desired result from Theorem \ref{thm M0}.
\end{proof}
\begin{remark}
In Theorem \ref{thm M}, the second condition is equivalent to the existence of a non-increasing sequence of infinite-dimensional closed spaces $(M_j)_{j\ge 1}$ of $X$ such that for every continuous seminorm $q$ on $Y$, there exists a positive number $C$, an integer $k\ge 1$ and a continuous seminorm $p$ on $X$ such that we have for any $j\ge k$, for any $x\in M_j$,
\[q(T_{n_j}x)\le C p(x).\]
\end{remark}

In the next theorem, we do not suppose that $X$ possesses a continuous norm but that there exists a continuous seminorm $p$ on $X$ such that $p(x)>0$ for every hypercyclic vector $x$ in $X$. Obviously, if a continuous norm exists then this condition is always satisfied.

\begin{theorem}\label{no subspace}
Let $(p_n)$ be an increasing sequence of seminorms defining the topology of $X$.
If there exists a continuous seminorm $p$ on $X$ such that $p(x)>0$ for every hypercyclic vector $x$ in $X$ and if there exist a sequence of subspaces $(E_n)_{n\ge 1}$ of finite codimension, a sequence $(C_n)_{n\ge 1}$ of numbers with $C_n\rightarrow \infty$ and a continuous seminorm $q$ on $Y$ such that for any $n\ge 1$, for any $x\in E_n$, we have
\[q(T_nx)\ge C_n p_n(x)\]
then $(T_n)$ does not possess any hypercyclic subspace.
\end{theorem}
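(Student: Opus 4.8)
The plan is to argue by contradiction: suppose $(T_n)$ \emph{does} possess a hypercyclic subspace $M$, an infinite-dimensional closed subspace all of whose nonzero vectors are hypercyclic. I would derive a contradiction by showing that the hypothesis $q(T_nx)\ge C_n p_n(x)$ on the finite-codimensional subspaces $E_n$ forces every vector in $M$ to eventually escape to infinity along the sequence $(T_n)$, which is incompatible with being hypercyclic. The seminorm $p$ with $p(x)>0$ on hypercyclic vectors is precisely the tool that lets this argument survive the absence of a genuine continuous norm: every nonzero $x\in M$ is hypercyclic, hence satisfies $p(x)>0$, so $p$ restricted to $M$ behaves like a norm on the hypercyclic vectors we care about.

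The key steps I would carry out are as follows. First I would fix a nonzero vector $x\in M$; since it is hypercyclic we have $p(x)>0$, and since $p$ is continuous there is an index $N$ and a constant $c>0$ with $p(x)\le c\,p_N(x)$, so $p_N(x)>0$. The real work is to locate, for arbitrarily large $n$, a vector inside $M$ that also lies in $E_n$ and has controlled $p_n$-size from below. Because $E_n$ has finite codimension and $M$ is infinite-dimensional, the intersection $M\cap E_n$ is still infinite-dimensional, and in particular nonzero; one should be able to pick a unit-type vector $x_n\in M\cap E_n$ with $p_n(x_n)$ bounded below away from zero. Applying the hypothesis gives $q(T_nx_n)\ge C_n p_n(x_n)\to\infty$, contradicting the fact that the hypercyclic orbit of any fixed vector in $M$ must be $q$-bounded along a dense subsequence.

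The main obstacle will be the quantitative bookkeeping needed to produce, simultaneously for all large $n$, vectors in $M\cap E_n$ whose $p_n$-norm is controlled \emph{from below} in a uniform way — the seminorms $p_n$ increase with $n$, and a vector that is large in $p_n$ might be nearly annihilated by some lower seminorm, so one must be careful that the lower bound $q(T_nx_n)\ge C_n p_n(x_n)$ genuinely blows up rather than being undercut by $p_n(x_n)$ decaying. To handle this cleanly I expect to fix one hypercyclic direction and, using a basic-sequence or Riesz-type perturbation argument together with the finite-codimension structure of $E_n$ (as in Lemma \ref{lem karl}), extract a single vector or a small-dimensional family in $M$ on which $p_n$ stays uniformly bounded below while lying in $E_n$ for infinitely many $n$. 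Once such a vector is in hand, the estimate $q(T_nx)\ge C_np_n(x)\to\infty$ contradicts hypercyclicity directly, completing the proof. I would also remark that when $X$ carries a continuous norm the whole argument simplifies, since then $p$ may be taken to be that norm and the positivity hypothesis is automatic, recovering the Fr\'echet generalization of Theorem \ref{codim ban}.
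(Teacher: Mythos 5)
Your high-level strategy (contradiction, a vector of $M$ whose image under $T_n$ escapes to infinity in the seminorm $q$, exploiting the finite codimension of the $E_n$ against the infinite dimension of $M$, with Lemma \ref{lem karl} as the perturbation tool, and with $p$ guaranteeing that seminorms can be normalized on $M$) matches the paper's, but the route you sketch through what you call the ``main obstacle'' has two genuine gaps, and they are fatal as stated. First, there is in general \emph{no} nonzero vector lying in $E_n$ for infinitely many $n$: an infinite family of finite-codimensional subspaces can have trivial intersection along every infinite index set. This already happens in the motivating example $T=2B$ on $\ell^2$, where the natural witnesses are $E_n=\{x: x_1=\cdots=x_n=0\}$ and $\bigcap_{n\in S}E_n=\{0\}$ for every infinite $S$; no basic-sequence or Riesz-type perturbation can place a single vector of $M$, with $p_n$-norm bounded below, inside infinitely many such $E_n$. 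Second, even if such a vector $x$ existed, you would only obtain $q(T_nx)\to\infty$ along a \emph{subsequence}, and that does not contradict hypercyclicity of $x$: density of the orbit $\{T_nx\}$ is perfectly compatible with one subsequence escaping to infinity. The contradiction needs $q(T_jx)\to\infty$ along the \emph{whole} sequence, for only then is the index set $\{j: q(T_jx)<1\}$ finite, which is incompatible with the orbit meeting every nonempty open subset of $\{y: q(y)<1\}$.

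The paper's proof supplies exactly the mechanism your outline is missing: the final vector is never required to lie in the $E_n$'s. Since $C_n\to\infty$, one chooses $k_1<k_2<\cdots$ with $C_j\ge n^3$ for $k_{n-1}<j\le k_n$, so the blocks $(k_{n-1},k_n]$ cover all large indices. One then picks recursively $e_n\in M\cap\bigcap_{k_{n-1}<j\le k_n}E_j$ normalized by $p_n(e_n)=1/n^2$ (here is where $p(x)>0$ on hypercyclic vectors enters: it makes the normalization possible, and the decay $1/n^2$ makes $x:=\sum_\nu e_\nu$ converge, with $x\in M$ since $M$ is closed), and additionally arranges, via Lemma \ref{lem karl} applied to $F_{k,j}=\text{span}(T_je_1,\dots,T_je_k)$ and the seminorm $q$, that $T_je_n$ lies in the associated subspaces $L_{k,j}$ for $j\le k_n$, $k\le n-1$. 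For $k_{n-1}<j\le k_n$, the head $\sum_{\nu<n}T_je_\nu$ lies in $F_{n-1,j}$ and the tail $\sum_{\nu>n}T_je_\nu$ lies in $L_{n,j}$, so two applications of the lemma isolate the single resonant term:
\[
q(T_jx)\ \ge\ \frac{q(T_je_n)}{(1+\varepsilon)(2+\varepsilon)}\ \ge\ \frac{C_j\,p_j(e_n)}{(1+\varepsilon)(2+\varepsilon)}\ \ge\ \frac{C_j\,p_n(e_n)}{(1+\varepsilon)(2+\varepsilon)}\ \ge\ \frac{n}{(1+\varepsilon)(2+\varepsilon)},
\]
using $e_n\in E_j$ and $j\ge n$. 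Thus $q(T_jx)\to\infty$ over \emph{all} $j$, even though $x$ itself belongs to no $E_j$. Without this block-plus-series device (one summand per block of indices, with Lemma \ref{lem karl} suppressing the interference of all other summands), the argument does not close at precisely the point you flagged.
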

\begin{proof}
If $p$ is a continuous seminorm on $X$, there exists an integer $N\ge 1$ and a positive number $K>0$ such that for every $x\in X$, we have $p(x)\le K p_N(x)$. Without loss of generality, we can thus suppose that we have $p_1(x)>0$ for every hypercyclic vector $x$ in $X$.
Assume that $M$ is a hypercyclic subspace; we show that there exists a vector $x\in M$ such that  \[\lim_{j\rightarrow\infty}q(T_j(x))=\infty\]
which is a contradiction.
Since $C_n$ tends to infinity, there exists an increasing sequence $(k_n)_{n\ge 1}$ of positive integers such that for any $n\ge 2$, for any $k_{n-1}< j\le k_n$, we have 
\begin{equation}
n^3\le C_j.\label{nC}
\end{equation}
We then construct a sequence $(e_n)_{n\ge 1}\subset M$ recursively such that
\begin{enumerate}
\item $p_n(e_n)=\frac{1}{n^2}$;
\item $e_n\in \bigcap_{k_{n-1}< j\le k_n} E_{j}$;
\item for any $j\le k_n$, $T_je_n\in \bigcap_{k\le n-1} L_{k,j}$, where $L_{k,j}$ is the closed subspace of finite codimension given by Lemma \ref{lem karl} for $F_{k,j}=\text{span}(T_je_1,\dots, T_je_k)$, the seminorm $q$ and $\varepsilon>0$.
\end{enumerate}
Such a construction is possible because $M$ is an infinite-dimensional space and each of the intersections $\bigcap_{k_{n-1}< j\le k_n} E_{j}$ and $\bigcap_{k\le n-1} L_{k,j}$ are spaces of finite codimension.
Moreover we can choose $e_n$ such that $p_n(e_n)=\frac{1}{n^2}$ because we have $p_1(x)>0$ for every hypercyclic vector $x$ and $M$ is supposed to be a hypercyclic subspace. 
Letting $x:=\sum_{\nu=1}^{\infty} e_\nu$, we have $x\in M$ and if $k_{n-1}<j\le k_n$,
we then have
\begin{align*}
q(T_jx)&=q\Big(\sum_{\nu=1}^{\infty}T_je_{\nu}\Big)
\ge \frac{1}{1+\varepsilon}q\Big(\sum_{\nu=1}^{n}T_je_{\nu}\Big) \quad \text{because }\sum_{\nu=n+1}^{\infty}T_je_{\nu}\in L_{n,j}\\
&\ge \frac{1}{(1+\varepsilon)(2+\varepsilon)}q(T_je_n) \quad \text{because }T_je_n\in L_{n-1,j}\\
&\ge \frac{C_j p_j(e_n)}{(1+\varepsilon)(2+\varepsilon)}
\ge \frac{C_j p_n(e_n)}{(1+\varepsilon)(2+\varepsilon)} \quad \text{because }e_n\in E_j \text{\ and\ } n-1\le k_{n-1}< j\\
&\ge \frac{n}{(1+\varepsilon)(2+\varepsilon)}\quad \text{by \eqref{nC}}.
\end{align*}
The result follows.
\end{proof}

\section{Criterion for having no hypercyclic subspace}

The criterion of Theorem \ref{no subspace} can be significantly simplified in the case of an operator $T:X\rightarrow X$. By operator we mean a continuous linear operator. For convenience, we begin by the case where $X$ is a Banach space.

\begin{proof}[Proof of Theorem \ref{thmbanach}]
The implications (i)$\Rightarrow$(ii) and (ii)$\Rightarrow$(iii) are evident.

(iii)$\Rightarrow$(i).
Let $C>1$, $E$ a subspace of finite codimension and $m\ge 1$ an integer such that for any $x\in E$, there exists an integer $k\le m$ for which
\[\|T^kx\|\ge C \|x\|.\]
Let $n\ge 1$. We consider $K=\sup_{0\le k< m}\|T^k\|$ and $E_n=\bigcap_{k=0}^{n-1}T^{-k}E$.
Let $x\in E_n$. Since $E_n\subset E$, there exists $0< k_1\le m$ such that we have
\[\|T^{k_1}x\|\ge C \|x\|.\]
If $k_1< n$, we have $T^{k_1}x\in E$ and there thus exists $k_1< k_2\le k_1+m$
such that
\[\|T^{k_2}x\|\ge C \|T^{k_1}x\|\ge C^2 \|x\|.\]
We can repeat this argument as long as we have $k_l<n$. Therefore, if $n=jm+p$ with $j\ge 0$ and $0\le p<m$ then we are sure to can use this argument at least $j$ times. As $C>1$, we deduce that there exists $n\le k < n+m$ such that
\[\|T^{k}x\|\ge C^{j} \|x\|.\]
We thus have for every $x\in E_n$, for some $n\le k < n+m$,
\[ C^{j}\|x\|\le \|T^{k}x\| \le K \|T^{n}x\|.\]
Letting $C_n=\frac{C^{j}}{K}$, we obtain the desired result.
\end{proof}

If we denote $\text{cofin}:=\{E\subset X: E \text{ subspace of finite codimension}\}$ then as condition (i) is equivalent to having no hypercyclic subspace in the case of weakly mixing operators on a separable complex Banach space, we obtain in particular the following new characterization:

\begin{cor}\label{corban}
Let $X$ be a separable complex Banach space and $T:X\rightarrow X$ a weakly mixing operator.
The operator $T$ possesses a hypercyclic subspace if and only if
\[\sup_{n\ge 1}\sup_{E\in \text{cofin}}\inf_{x\in E\backslash \{0\}} \frac{\|T^nx\|}{\|x\|}\le 1\]
or equivalently if and only if 
\[\sup_{n\ge 1}\sup_{E\in \text{cofin}}\inf_{x\in E\backslash \{0\}} \frac{\|T^nx\|}{\|x\|}<\infty.\]
\end{cor}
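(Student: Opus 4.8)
The plan is to derive the corollary directly from Theorem \ref{thmbanach} together with the fact (stated in the excerpt) that, for a weakly mixing operator on a separable complex Banach space, condition (i) of Theorem \ref{thmbanach} is precisely the obstruction to possessing a hypercyclic subspace. Write
\[
\Phi:=\sup_{n\ge 1}\sup_{E\in\text{cofin}}\inf_{x\in E\setminus\{0\}}\frac{\|T^nx\|}{\|x\|}.
\]
The inner quantity $\inf_{x\in E\setminus\{0\}}\|T^nx\|/\|x\|$ is simply the best constant $c$ for which $\|T^nx\|\ge c\|x\|$ holds on $E$, so the statements ``$\Phi>1$'', ``$\Phi=\infty$'', and the various hypotheses of Theorem \ref{thmbanach} are all statements about the existence of good lower bounds $\|T^nx\|\ge C\|x\|$ on cofinite-codimensional subspaces. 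The task is therefore to translate the three conditions of Theorem \ref{thmbanach} into inequalities involving $\Phi$.

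First I would record the dictionary between $\Phi$ and the conditions of the theorem. Condition (ii) asserts that for some $n$ and some $E\in\text{cofin}$ the best constant on $E$ is at least some $C>1$; taking the supremum over $E$ and then over $n$ shows this is exactly ``$\Phi>1$''. Similarly, condition (i) produces constants $C_n\to\infty$ with $\|T^nx\|\ge C_n\|x\|$ on cofinite $E_n$, so the best constant at stage $n$ is at least $C_n$, forcing $\Phi=\infty$; conversely $\Phi=\infty$ yields, for each target value, some $n$ and some $E$ on which the best constant exceeds it, and relabelling gives a sequence $C_n\to\infty$, i.e. condition (i). Hence
\[
\text{(i)}\iff \Phi=\infty,\qquad \text{(ii)}\iff \Phi>1.
\]

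Next I would invoke the equivalence of (i), (ii), (iii) from Theorem \ref{thmbanach} to conclude that $\Phi>1$ already forces $\Phi=\infty$; in other words the map takes only the values in $[0,1]$ or $\infty$, and can never land strictly between $1$ and $\infty$. This is the conceptual heart of the corollary and follows with no further work once the dictionary is in place. Consequently the two displayed inequalities in the corollary, $\Phi\le 1$ and $\Phi<\infty$, are logically equivalent: each is the negation of condition (i), i.e. the negation of ``$\Phi=\infty$''.

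Finally I would close the loop with the hypercyclicity statement. By hypothesis $T$ is weakly mixing on a separable complex Banach space, so by the remark in the excerpt following Corollary \ref{corban}'s setup, $T$ fails to have a hypercyclic subspace if and only if condition (i) holds, i.e. if and only if $\Phi=\infty$. Negating, $T$ possesses a hypercyclic subspace if and only if $\Phi<\infty$, equivalently (by the previous paragraph) if and only if $\Phi\le 1$, which is the assertion of the corollary. I do not anticipate a genuine obstacle here: the only point requiring care is the bookkeeping that turns the existential quantifiers of conditions (i) and (ii) into statements about the nested suprema defining $\Phi$, and in particular checking that the supremum over $n$ and $E$ is attained in the relevant qualitative sense (finite versus infinite, $\le 1$ versus $>1$) rather than needing an actual maximiser.
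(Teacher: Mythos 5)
Your proposal is correct and is essentially the paper's own (implicit) argument: the paper gives no separate proof of Corollary \ref{corban}, deriving it exactly as you do from Theorem \ref{thmbanach} via the dictionary (ii) $\Leftrightarrow$ $\Phi>1$ and (i) $\Rightarrow$ $\Phi=\infty$ (so $\Phi$ never lies strictly between $1$ and $\infty$), combined with the stated fact that condition (i) characterizes the absence of hypercyclic subspaces for weakly mixing operators on separable complex Banach spaces. One small caution: your ``relabelling'' justification of $\Phi=\infty\Rightarrow$ (i) is insufficient as phrased, since (i) requires a constant for \emph{every} $n$ rather than along a subsequence, but this is harmless because the implication follows from your own chain $\Phi=\infty\Rightarrow\Phi>1\Leftrightarrow\text{(ii)}\Rightarrow\text{(i)}$, where (ii) $\Rightarrow$ (i) is precisely the iteration argument in the proof of Theorem \ref{thmbanach}.
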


We generalize Theorem \ref{thmbanach} for operators on a Fréchet space.

\begin{theorem}\label{thmfrechet}
Let $X$ be a Fréchet space and $T:X\rightarrow X$ an operator. 
The following conditions are equivalent:
\begin{enumerate}[\upshape (i)]
\item there exist an increasing sequence of seminorms $(p_n)$ defining the topology of $X$, a sequence of numbers $(C_n)$ with $C_n\rightarrow \infty$, a sequence of subspaces $(E_n)$ of finite codimension and $N\ge 1$ such that for every $n\ge 1$,
\[p_N(T^nx)\ge C_n p_n(x) \quad \text{for any } x\in E_n;\]
\item there exist a sequence of seminorms $(p_n)$ defining the topology of $X$ and $N\ge 1$ such that for every $n\ge 1$, there exist $C_n>1$, a subspace $E_n$ of finite codimension and an integer $m_n\ge 1$ such that
\[p_N(T^{m_n}x)\ge C_n p_n(x) \quad \text{for any } x\in E_n;\]
\item there exist a sequence of seminorms $(p_n)$ defining the topology of $X$ and $N\ge 1$ such that for every $n\ge 1$, there exist $C_n>1$, a subspace $E_n$ of finite codimension and an integer $m_n\ge 1$ such that for any $x\in E_n$, there exists an integer $1\le k\le m_n$ for which
\[p_N(T^kx)\ge C_n p_n(x).\]
\end{enumerate}
\end{theorem}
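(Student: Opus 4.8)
The plan is to prove the cycle of implications (i)$\Rightarrow$(ii)$\Rightarrow$(iii)$\Rightarrow$(i), following the scalar argument in the proof of Theorem~\ref{thmbanach} but carrying along the seminorm family and the fixed left index. The first two implications are routine. For (i)$\Rightarrow$(ii) I would keep the same increasing family $(p_n)$ and the same $N$: since $C_n\to\infty$, for each $n$ there is $l\ge n$ with $C_l>1$, and $p_l\ge p_n$ turns $p_N(T^lx)\ge C_lp_l(x)$ into $p_N(T^lx)\ge C_lp_n(x)$ on $E_l$, so that $m_n:=l$, $C_n:=C_l$, $E_n:=E_l$ do the job. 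The implication (ii)$\Rightarrow$(iii) is immediate, taking $k:=m_n$ for each $x\in E_n$.

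The substance is (iii)$\Rightarrow$(i), for which I may assume $(p_n)$ increasing. The key idea is to run the multiplicative iteration of the Banach proof \emph{at the fixed index $N$}, where the right- and left-hand seminorms coincide. Indeed, the instance of (iii) at index $N$ provides $C_N>1$, a finite-codimensional subspace $E_N$ and an integer $m_N$ such that for every $z\in E_N$ there is $1\le k\le m_N$ with $p_N(T^kz)\ge C_Np_N(z)$; this is a genuine multiplicative step taking place entirely inside $p_N$. Starting from the instance of (iii) at a block index $\ell$, namely $p_N(T^{k_0}x)\ge C_\ell p_\ell(x)$ with $k_0\le m_\ell$, I would apply such steps repeatedly to drive the exponent above a prescribed target $n$, gaining a factor $C_N>1$ each time.

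Two devices force the outcome into the exact form of (i). To make the subspace independent of $x$ in spite of the $x$-dependent step lengths, I would intersect over \emph{all} possible intermediate exponents, setting \[F_n:=E_\ell\cap\bigcap_{k=0}^{n+m_N-1}T^{-k}E_N,\] which has finite codimension because $T$ is continuous and $T^{-1}$ preserves finite codimension. For any $x\in F_n$ each intermediate iterate $T^kx$ ($k<n+m_N$) lies in $E_N$, so the iteration never stalls and stops at some exponent $\kappa(x)\in[n,n+m_N)$ with \[p_N(T^{\kappa(x)}x)\ge C_\ell C_N^{J}p_\ell(x),\qquad J\ge\tfrac{n-m_\ell}{m_N},\] the lower bound on the number of steps $J$ being uniform in $x$, so that $C_N^{J}\to\infty$. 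To turn the bounded overshoot into the exact exponent, I would use continuity of the finitely many powers $T^i$, $0\le i<m_N$: choosing $\tilde N$ and $A$ uniformly so that $p_N(T^iy)\le A\,p_{\tilde N}(y)$, the factorization $T^{\kappa}x=T^{\kappa-n}(T^nx)$ gives $p_{\tilde N}(T^nx)\ge D_np_\ell(x)$ with $D_n:=C_\ell C_N^{J}/A\to\infty$.

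It remains to produce an increasing seminorm family defining the topology, rather than the fixed $p_\ell$ on the right. I would arrange the targets in blocks, fixing breakpoints $\beta_1<\beta_2<\cdots$ with $\beta_{\ell-1}\ge m_\ell$ (so that $k_0\le m_\ell<n$ for every $n$ in block $\ell$) and setting $r_n:=p_\ell$ for $\beta_{\ell-1}<n\le\beta_\ell$. This family is increasing, every $p_\ell$ occurs in it, so it defines the topology, and the inequality above becomes $r_{\tilde N}(T^nx)\ge D_nr_n(x)$ on $F_n$ with $D_n\to\infty$; the finitely many small $n$ are irrelevant since only $C_n\to\infty$ is required. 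I expect the main obstacle to be precisely this bookkeeping: keeping the left index $\tilde N$ fixed and the subspaces $F_n$ simultaneously $x$-independent and of finite codimension, while the base seminorm $p_\ell$, the step lengths, and the overshoot all vary. Running the iteration at index $N$ and intersecting over all intermediate exponents are what make these constraints compatible.
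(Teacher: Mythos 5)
Your proposal follows essentially the same route as the paper's proof of Theorem \ref{thmfrechet}: the easy implications are handled identically, and for (iii)$\Rightarrow$(i) you run the multiplicative step of condition (iii) at the fixed index $N$, where the left and right seminorms coincide, keep the iteration from stalling by intersecting $E_\ell$ with the preimages $T^{-k}E_N$, absorb the bounded overshoot $\kappa(x)-n<m_N$ via continuity of the finitely many powers $T^i$, $0\le i<m_N$, and finish with a block-constant seminorm family. This is precisely the paper's $E_{n,l}=\bigcap_{k=0}^{n-1}T^{-k}E_N\cap E_l$, its $q_J$ and $K$, and its blocks $m_l+lm_N\le n<m_{l+1}+(l+1)m_N$. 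One remark on your opening reduction: ``I may assume $(p_n)$ increasing'' in (iii) is true but not free, because the seminorm sits on the small side of the inequality; one must pass to $q_n=\max(p_1,\dots,p_n)$ with $E'_n=\bigcap_{j\le n}E_j$, $C'_n=\min_{j\le n}C_j$, $m'_n=\max_{j\le n}m_j$, choosing for each $x$ an index $j^*\le n$ realizing $q_n(x)=p_{j^*}(x)$. This is exactly the intersection trick the paper performs instead at the very end, via $E'_n=\bigcap_{k\le l}E_{n,k}$; asserting the reduction without it would leave a hole, though a standard one.

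There is, however, one step that fails as literally stated: your breakpoint condition $\beta_{\ell-1}\ge m_\ell$ is too weak to give $D_n\to\infty$. At the start of block $\ell$ one may have $n-m_\ell=1$, hence only $J\ge 1$ guaranteed multiplicative steps and merely $D_n\ge C_\ell C_N/A$; since nothing in (iii) prevents $C_\ell\downarrow 1$, the infimum of $D_n$ over block $\ell$ can remain bounded, and this occurs in \emph{every} block, so it is not covered by your ``finitely many small $n$ are irrelevant'' remark. The repair is exactly the paper's choice of blocks: take $\beta_{\ell-1}\ge m_\ell+\ell m_N$, so that $n-m_\ell>\ell m_N$ and hence $J\ge\ell$ throughout block $\ell$, giving $D_n\ge C_N^{\ell}/A\to\infty$ (the paper's $C'_n=C_N^l/K$). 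A last cosmetic point: the left seminorm of the new family cannot be written $r_{\tilde N}$, since after reindexing $r_{\tilde N}$ equals some earlier $p_{\ell'}$ with $p_{\ell'}\le p_{\tilde N}$, which is the wrong direction; condition (i) only asks for \emph{some} left index, so one should pick $N'$ with $r_{N'}=p_{\tilde N}$, e.g.\ $N'=\beta_{\tilde N}$. With these two adjustments your argument coincides with the paper's.
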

\begin{proof}
The implications (i)$\Rightarrow$(ii) and (ii)$\Rightarrow$(iii) are evident.

(iii)$\Rightarrow$(i). By hypothesis, there exists $N\ge 1$ such that for every $n\ge 1$, there exist $C_n>1$, a subspace $E_n$ of finite codimension and an integer $m_n\ge 1$ such that for any $x\in E_n$, there exists an integer $1\le k\le m_n$ for which
\[p_N(T^kx)\ge C_n p_n(x).\]
Let $l\ge 1$. For any $n\ge 1$, we consider  $E_{n,l}=\bigcap_{k=0}^{n-1}T^{-k}E_N\cap E_l$.
Let $x$ be a vector in $E_{n,l}$. There exists $k_1\le m_l$ such that 
\[p_N(T^{k_1}x)\ge C_l p_l(x).\]
Moreover if $n=m_l+jm_N+p$ with $j\ge 1$ and $0\le p<m_N$ then $T^{k_1}x\in E_N$ and we deduce as in the proof of Theorem \ref{thmbanach} that there exists an integer $n\le k< n+m_N$ such that
\[p_{N}(T^{k}x)\ge C^j_Np_N(T^{k_1}x)\ge C^j_NC_lp_l(x).\]
If for $j\ge 1$, we denote $q_j=\max(p_1,\dots,p_j)$ then by continuity of $T$, there exist a positive number $K$ and an integer $J\ge 1$ such that for any $0\le m< m_N$, we have
\[p_N(T^mx)\le K q_{J}(x)\quad\text{for any $x\in X$} \]
and thus for any $0\le m< m_N$, for any $k\ge m$, we have
\[q_J(T^{k-m} x)\ge \frac{1}{K}p_{N}(T^{k}x)\quad\text{for any $x\in X$}.\]
If $n=m_l+jm_N+p$ with $j\ge 1$ and $0\le p<m_N$, it follows that for every vector $x\in E_{n,l}$, there exists $n\le k< n+m_N$ such that
\begin{equation}
q_J(T^nx)= 
q_J(T^{k-(k-n)}x)
\ge \frac{1}{K}p_N(T^{k}x)
\ge \frac{C_N^jC_l}{K}p_l(x).\label{add}\end{equation}
To conclude, we have just to construct a convenient increasing sequence of seminorms $(p'_n)$ defining the topology of $X$. We can suppose that $(m_l)_{l\ge 1}$ is an increasing sequence. So if for $n< m_1+m_N$, we let $p'_n=q_1$, $C'_n=0$, $E'_n=X$, and for $m_l+lm_N\le n< m_{l+1}+(l+1)m_N$, we let $p'_n=q_l$, $C'_n=\frac{C_N^{l}}{K}$ and $E'_n=\bigcap_{k\le l}E_{n,k}$, we have the desired inequalities for $(C'_n)$, $(E'_n)$, $(p'_n)$ and $q_J$. Indeed, if $n\ge m_l+lm_N$ and $x\in E'_n$, then for any $k\le l$, we have by \eqref{add},
\[q_J(T^nx)\ge \frac{C^l_NC_k}{K}p_k(x)\ge \frac{C^l_N}{K}p_k(x)\]
and thus if $m_l+lm_N\le n< m_{l+1}+(l+1)m_N$, we get
\[q_J(T^nx)\ge C'_nq_l(x)=C'_np'_n(x).\]
\end{proof}

By Theorem \ref{no subspace}, we know that if $X$ is a Fréchet space with a continuous norm, condition (i) in Theorem \ref{thmfrechet} implies the non-existence of hypercyclic subspaces.

\begin{cor}\label{corfre}
Let $X$ be a Fréchet space with a continuous norm and $T:X\rightarrow X$ an operator.
If there exist a sequence of seminorms $(p_j)$ defining the topology of $X$ and $J\ge 1$ such that for every $j\ge 1$,
\[\sup_{n\ge 1}\sup_{E\in \text{cofin}}\inf_{x\in E\backslash \ker(p_j)} \frac{p_J(T^nx)}{p_j(x)}> 1\]
or equivalently if 
\[\sup_{n\ge 1}\sup_{E\in \text{cofin}}\inf_{x\in E\backslash \ker(p_j)} \frac{p_J(T^nx)}{p_j(x)}=\infty\]
then $T$ does not possess any hypercyclic subspace.
\end{cor}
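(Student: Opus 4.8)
The plan is to read the hypothesis as condition (ii) of Theorem~\ref{thmfrechet} and then run the chain (ii)$\Rightarrow$(i)$\Rightarrow$(no hypercyclic subspace) that the surrounding text has already assembled. Fix $j\ge 1$. Since a supremum that exceeds $1$ must have a term exceeding $1$, the inequality $\sup_{n}\sup_{E}\inf_{x\in E\backslash\ker(p_j)}p_J(T^nx)/p_j(x)>1$ produces an integer $m_j\ge 1$ and a finite-codimension subspace $E_j$ with $C_j:=\inf_{x\in E_j\backslash\ker(p_j)}p_J(T^{m_j}x)/p_j(x)>1$; equivalently $p_J(T^{m_j}x)\ge C_j\,p_j(x)$ for every $x\in E_j$, the inequality being automatic on $\ker(p_j)$ where the right-hand side vanishes. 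Letting $j$ vary, this is exactly condition (ii) of Theorem~\ref{thmfrechet}, with the fixed index $N=J$ and the given defining sequence $(p_j)$.

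By Theorem~\ref{thmfrechet}, condition (ii) implies condition (i); and, as noted immediately before this corollary, condition (i) together with the existence of a continuous norm forces the non-existence of hypercyclic subspaces through Theorem~\ref{no subspace} (applied to the sequence $T_n=T^n$ with $Y=X$, the continuous norm serving as a continuous seminorm that is positive on every hypercyclic vector). This already yields the conclusion under the ``$>1$'' hypothesis.

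It remains to check that the ``$>1$'' and ``$=\infty$'' forms of the hypothesis are equivalent. One direction is trivial. For the other, the ``$>1$'' hypothesis yields condition (i): an increasing defining sequence $(p'_n)$, an index $N'$, finite-codimension subspaces $E_n$ and constants $C_n\to\infty$ with $p'_{N'}(T^nx)\ge C_n\,p'_n(x)$ for $x\in E_n$. Fixing $j$ and using $p'_n\ge p'_j$ for $n\ge j$, one gets $p'_{N'}(T^nx)\ge C_n\,p'_j(x)$ on $E_n$, so the inner infimum is at least $C_n$ and the supremum over $n$ is at least $\sup_{n\ge j}C_n=\infty$; hence the ``$=\infty$'' form holds for $(p'_n)$ and $N'$. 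The points needing care are purely bookkeeping: that ``$\sup>1$'' yields a genuine witness with strict inequality $C_j>1$, that vectors in $\ker(p_j)$ are handled correctly when switching between the ratio and the inequality, and that the passage (ii)$\to$(i) is allowed to change the seminorms and the distinguished index---which is harmless, since the final step only needs condition (i) for some defining sequence.
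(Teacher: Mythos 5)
Your proof is correct and follows exactly the route the paper intends: the corollary is stated as an immediate consequence of Theorem~\ref{thmfrechet} (your extraction of witnesses from the supremum is precisely condition (ii) with $N=J$) together with the remark that condition (i) rules out hypercyclic subspaces via Theorem~\ref{no subspace}, the continuous norm supplying the required seminorm positive on hypercyclic vectors. Your verification of the equivalence of the ``$>1$'' and ``$=\infty$'' formulations, passing through condition (i) and allowing the defining sequence of seminorms to change, is also the natural (and correct) reading of the statement's existential quantifiers.
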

\begin{remark}
Unlike the case of complex Banach spaces, we do not know if, in general, these criteria are also necessary conditions for having no hypercyclic subspace. So far no characterization is known of operators on Fréchet spaces with a continuous norm.
\end{remark}

\section{Weighted shifts on complex spaces $l^p$ and $c_0$}
For weighted shifts on complex spaces $l^p$ and $c_0$, the characterization given by Corollary \ref{corban} can be simply expressed in terms of the weights.

\begin{prop}\label{prop se}
Let $X$ be the complex space $l^p$ with $1\le p<\infty$ or the complex space $c_0$, and $B_w:X\rightarrow 
X$ the weighted shift defined by $B_we_n=w_ne_{n-1}$, where $e_0=0$, $(e_n)_{n\ge 1}$ is the canonical basis and $(w_n)_{n\ge 1}$ is a sequence of non-zero scalars.
The following inequalities are equivalent:
\begin{enumerate}[\upshape (1)]
\item $\displaystyle{\sup_{n\ge 1}\sup_{E\in \text{cofin}} \inf_{x\in E\backslash \{0\}} \frac{\|B_w^nx\|}{\|x\|}\le 1}$,
\item $\displaystyle{\sup_{n\ge 1}\sup_{N\ge 1} \inf_{k\ge N} \prod_{\nu=1}^n |w_{\nu+k}|\le 1}$,
\item $\displaystyle{\sup_{n\ge 1}\sup_{N\ge 1} \inf_{k\ge N} \prod_{\nu=1}^n |w_{\nu+k}|< \infty}$.
\end{enumerate}
\end{prop}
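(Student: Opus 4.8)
The plan is to reduce all three statements to the single sequence
$$a_n := \sup_{N\ge 1}\inf_{k\ge N}\prod_{\nu=1}^n|w_{\nu+k}| = \liminf_{k\to\infty}\prod_{\nu=1}^n|w_{\nu+k}|,$$
the last equality holding because $\inf_{k\ge N}(\cdots)$ increases with $N$. In this notation condition (2) reads $\sup_n a_n\le 1$ and condition (3) reads $\sup_n a_n<\infty$. First I would record the action of the iterates: since $B_w^n e_k=\big(\prod_{\nu=k-n+1}^{k}w_\nu\big)e_{k-n}$ (with $e_j=0$ for $j\le 0$), the $e_j$-coefficient of $B_w^n x$ for $x=\sum_k x_k e_k$ is $x_{j+n}\prod_{\nu=1}^n w_{j+\nu}$. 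Because $B_w^n$ carries basis vectors with distinct indices to basis vectors with distinct indices, these images have pairwise disjoint supports, so $\|B_w^n x\|^p=\sum_{j\ge 1}|x_{j+n}|^p\big|\prod_{\nu=1}^n w_{j+\nu}\big|^p$ for $l^p$, and the analogous supremum for $c_0$.

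The core of the argument is to identify the quantity in (1) with $a_n$, that is, to prove
$$b_n := \sup_{E\in\text{cofin}}\inf_{x\in E\backslash\{0\}}\frac{\|B_w^nx\|}{\|x\|}=a_n\qquad(n\ge 1).$$
For the inequality $b_n\ge a_n$, given $\varepsilon>0$ I choose $N$ large enough that $\prod_{\nu=1}^n|w_{\nu+k}|\ge a_n-\varepsilon$ for every $k>N-n$ and take $E=\overline{\text{span}}\{e_k:k>N\}$, which has codimension $N$; by the support formula every $x\in E$ satisfies $\|B_w^nx\|\ge(a_n-\varepsilon)\|x\|$, so $b_n\ge a_n-\varepsilon$. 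For the reverse inequality $b_n\le a_n$, I fix an arbitrary $E\in\text{cofin}$ of codimension $d$ and, along a subsequence realizing the liminf, pick indices $k_1<\dots<k_{d+1}$ with $\prod_{\nu=1}^n|w_{k_l+\nu}|\le a_n+\varepsilon$ for each $l$. The $(d+1)$-dimensional span of $e_{k_1+n},\dots,e_{k_{d+1}+n}$ meets the codimension-$d$ subspace $E$ in some nonzero vector $x$, whose image $B_w^n x$ again has disjoint supports, giving $\|B_w^nx\|\le(a_n+\varepsilon)\|x\|$ and hence $\inf_{x\in E\backslash\{0\}}\|B_w^nx\|/\|x\|\le a_n+\varepsilon$. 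Letting $\varepsilon\to 0$ yields $b_n\le a_n$. Consequently $\sup_n b_n=\sup_n a_n$, so (1) and (2) are the same statement.

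It remains to establish (2)$\Leftrightarrow$(3). The implication (2)$\Rightarrow$(3) is trivial, so I would prove (3)$\Rightarrow$(2) by contraposition, exploiting the multiplicative structure of the products. If $a_m>1$ for some $m$, choose $c>1$ and $N$ with $\prod_{\nu=1}^m|w_{\nu+k}|\ge c$ for all $k\ge N$. Splitting a product of length $jm$ into $j$ consecutive blocks of length $m$ gives
$$\prod_{\nu=1}^{jm}|w_{\nu+k}|=\prod_{i=0}^{j-1}\prod_{\nu=1}^m|w_{\nu+k+im}|\ge c^j\qquad(k\ge N),$$
so $a_{jm}\ge c^j\to\infty$ and (3) fails. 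I expect the disjoint-support norm computation to be entirely routine; the two points that require care are the codimension-counting step that forces a small vector into an arbitrary finite-codimension subspace (the upper bound $b_n\le a_n$) and the block-multiplication \emph{gap} argument above, which is precisely what makes a finite supremum automatically at most $1$ and thus collapses (3) onto (2).
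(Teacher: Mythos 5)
Your proof is correct, but it closes the cycle by a genuinely different route than the paper. The paper proves (1)$\Rightarrow$(2) by contraposition (its computation is exactly your lower bound $b_n\ge a_n$, with $E=\{x:x_1=\cdots=x_{n+N}=0\}$), takes (2)$\Rightarrow$(3) as trivial, and for (3)$\Rightarrow$(1) establishes only the crude bound $\sup_n b_n\le K<\infty$ via the same codimension-counting/disjoint-support device you use; it then invokes Corollary \ref{corban} --- in substance Theorem \ref{thmbanach}, whose implication (ii)$\Rightarrow$(i) shows that $\sup_n b_n>1$ forces $\sup_n b_n=\infty$ --- to collapse ``$<\infty$'' to ``$\le 1$'' at the operator level. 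You instead prove the exact identity $b_n=a_n$ for every fixed $n$ (both inequalities, with $\varepsilon$ on each side), so that (1) and (2) become verbatim the same statement, and then you collapse (3) onto (2) directly at the level of the weight sequence by the block estimate $a_{jm}\ge c^j$. That block-multiplication argument is precisely the elementary mechanism hidden inside the paper's proof of Theorem \ref{thmbanach} ((iii)$\Rightarrow$(i)) and echoed in Proposition \ref{cond N}, here specialized to shifts, where splitting $T^{jm}=\prod$ of blocks is immediate on basis vectors. What the paper's route buys is economy given the general criterion already proved (and it illustrates that criterion in action); what yours buys is a fully self-contained proof that never leaves the sequence space --- no appeal to Theorem \ref{thmbanach} nor to the weakly-mixing/hypercyclic-subspace framework in which Corollary \ref{corban} is phrased --- together with the sharper quantitative fact $\sup_{E\in\text{cofin}}\inf_{x\in E\backslash\{0\}}\|B_w^nx\|/\|x\|=\liminf_{k}\prod_{\nu=1}^n|w_{\nu+k}|$, of which all three equivalences are immediate corollaries. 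One pedantic point to make explicit when writing it up: in the lower bound $b_n\ge a_n-\varepsilon$ you should take $N\ge n$ (your ``$N$ large enough'' covers this), since for $x\in\overline{\text{span}}\{e_k:k>N\}$ with $N<n$ some coordinates of $x$ would be annihilated by $B_w^n$ and the estimate $\|B_w^nx\|\ge(a_n-\varepsilon)\|x\|$ could fail.
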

\begin{proof}(1)$\Rightarrow$(2). We prove in fact that $\neg$(2)$\Rightarrow \neg$(1). We thus suppose that we have \[\sup_{n\ge 1}\sup_{N\ge 1} \inf_{k\ge N} \prod_{\nu=1}^n |w_{\nu+k}|> 1.\] Then there exist $C>1$, $n\ge 1$ and $N\ge 1$ such that \[\inf_{k\ge N} \prod_{\nu=1}^n |w_{\nu+k}|\ge C.\]
Letting $E_{n+N}:=\{x\in X: x_1=x_2=\cdots=x_{n+N}=0 \}$, we have for every vector $x\in E_{n+N}$,
\begin{align*}
\|B^n_wx\|&=\Big\|(0,\dots,0,\Big(\prod_{\nu=1}^n w_{\nu+N+1}\Big)x_{n+N+1},\Big(\prod_{\nu=1}^n w_{\nu+N+2}\Big)x_{n+N+2},\dots)\Big\|\\
&\ge \Big(\inf_{k\ge N} \prod_{\nu=1}^n |w_{\nu+k}|\Big)\|x\|\\
&\ge C \|x\|. 
\end{align*}
Since $E_{n+N}$ is a space of finite codimension, we deduce that we have
\[\sup_{n\ge 1}\sup_{E\in \text{cofin}} \inf_{x\in E\backslash \{0\}} \frac{\|B_w^nx\|}{\|x\|}>1. \]

(2)$\Rightarrow$(3). Evident.

(3)$\Rightarrow$(1). Let $E$ be a subspace of finite codimension. By definition, there exists  a finite-dimensional subspace $F$ such that $E\oplus F=X$. If the dimension of $F$ is $d$ then for all linearly independent vectors $x_1,\ldots, x_{d+1}$, there exist $a_{1},\dots, a_{d+1}\in \mathbb{K}$ such that
\[a_1 x_1+\cdots+a_{d+1}x_{d+1}\in E\backslash\{0\}.\]
If we suppose that we have \[\sup_{n\ge 1}\sup_{N\ge 1} \inf_{k\ge N} \prod_{\nu=1}^n |w_{\nu+k}|<K,\]
then for any $n\ge 1$, there exist different indices $i_1,\ldots,i_{d+1}$ such that for any $1\le k \le d+1$, we have \[\|B^n_w e_{n+i_{k}}\|=\prod_{\nu=1}^n |w_{\nu+i_k}|< K.\]
By the previous reasoning, there also exist $a_{1},\dots, a_{d+1}$ such that
\[a_1 e_{n+i_1}+\cdots+a_{d+1}e_{n+i_{d+1}}\in E\backslash\{0\}.\]
We thus have 
\begin{align*} \|B^n_w (a_1 e_{n+i_1}+\cdots+a_{d+1}e_{n+i_{d+1}})\| &< K \|a_1 e_{i_1}+\cdots+a_{d+1}e_{i_{d+1}}\|\\
&= K \|a_1 e_{n+i_1}+\cdots+a_{d+1}e_{n+i_{d+1}}\|. \end{align*}
We conclude that we have $\sup_{n\ge 1}\sup_{E\in \text{cofin}} \inf_{x\in E\backslash \{0\}} \frac{\|T^nx\|}{\|x\|}\le K$ and by Corollary~\ref{corban} that we have the desired inequality.
\end{proof}

In their article \cite{LeonMontes}, Le\'on and Montes have shown that a hypercyclic weighted shift $B_w$ on the complex space $l^2$ possesses a hypercyclic subspace if and only if
\[\lim_{n\rightarrow \infty}\Big(\inf_{k\ge 1} \prod_{\nu=1}^{n} |w_{\nu+k}|\Big)^{\frac{1}{n}}\le 1 \quad \text{or equivalently}\quad \sup_{n\ge 1} \inf_{k\ge 1}\prod_{\nu=1}^n |w_{\nu+k}|\le 1.\]
The supremum on $N$ in the previous result is thus surely unnecessary.

\begin{prop}\label{cond N}
Let $(w_n)_{n\ge 1}$ be a sequence of non-zero scalars.
We have
\[\sup_{n\ge 1}\sup_{N\ge 1} \inf_{k\ge N} \prod_{\nu=1}^n |w_{\nu+k}|>1 \Leftrightarrow \sup_{n\ge 1}\inf_{k\ge 1} \prod_{\nu=1}^n |w_{\nu+k}|=\infty.\]
\end{prop}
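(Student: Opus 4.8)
The plan is to treat the two implications separately, with the forward ($\Rightarrow$) direction carrying essentially all the content. Throughout I would write $P_n(k):=\prod_{\nu=1}^n|w_{\nu+k}|$ and record the basic multiplicativity $P_{ab}(k)=\prod_{j=0}^{a-1}P_b(k+jb)$, obtained by splitting a product of length $ab$ into $a$ consecutive blocks of length $b$. I would also observe that for fixed $n$ the quantity $\inf_{k\ge N}P_n(k)$ is nondecreasing in $N$, so that $\sup_{N\ge1}\inf_{k\ge N}P_n(k)=\liminf_{k\to\infty}P_n(k)$; the left-hand inequality therefore says exactly that $\liminf_{k\to\infty}P_{n_0}(k)>1$ for some $n_0$, i.e.\ that there are $n_0\ge1$, a constant $C>1$ and an index $N_0$ with $P_{n_0}(k)\ge C$ for all $k\ge N_0$.

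The implication ($\Leftarrow$) is immediate: since $\inf_{k\ge1}P_n(k)\le\inf_{k\ge N}P_n(k)$ for every $N$, we have $\inf_{k\ge1}P_n(k)\le\sup_{N}\inf_{k\ge N}P_n(k)$ for each $n$, whence $\sup_n\inf_{k\ge1}P_n(k)\le\sup_n\sup_N\inf_{k\ge N}P_n(k)$. If the former equals $\infty$, the latter equals $\infty>1$.

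For ($\Rightarrow$), starting from $n_0$, $C>1$ and $N_0$ as above, I would estimate $P_{mn_0}(k)$ for an arbitrary $k\ge1$ by writing it as the product of the $m$ blocks $P_{n_0}(k+jn_0)$, $0\le j\le m-1$. Call a block \emph{good} if $k+jn_0\ge N_0$ and \emph{bad} otherwise; each good block is $\ge C$, and the number of bad blocks is at most $J_0:=\lceil N_0/n_0\rceil$, independently of $k$, since a bad block forces $jn_0<N_0-k<N_0$. The one delicate point---and the only place where the non-vanishing of the weights is used---is the lower bound for the bad blocks: all weights occurring in them have indices in the finite set $\{1,\dots,N_0+n_0\}$, so with $\delta:=\min\{\,|w_i|:1\le i\le N_0+n_0\,\}>0$ and $\alpha:=\min(\delta,1)^{n_0J_0}>0$ the product of all bad blocks is $\ge\alpha$ for every $k\ge1$. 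Consequently $P_{mn_0}(k)\ge\alpha\,C^{\,m-J_0}$ for all $k\ge1$, and hence $\inf_{k\ge1}P_{mn_0}(k)\ge\alpha C^{\,m-J_0}\to\infty$ as $m\to\infty$ because $C>1$. This yields $\sup_n\inf_{k\ge1}P_n(k)=\infty$, which is the right-hand side.

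The main obstacle is precisely the uniformity in $k$ in the forward direction: the hypothesis only controls blocks once $k\ge N_0$, so for the small values of $k$ one must absorb a bounded number of uncontrolled initial blocks into a single positive constant. This is exactly what the boundedness below of $(|w_i|)$ on a finite range of indices provides, and it is the reason the $\liminf$-type control of the left-hand side can be upgraded to the genuine infimum over all $k$ appearing on the right-hand side.
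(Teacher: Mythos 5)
Your proof is correct and takes essentially the same route as the paper: both split the long product $\prod_{\nu=1}^{mn_0}|w_{\nu+k}|$ into consecutive blocks of length $n_0$, gain a factor $C>1$ from each block whose starting index exceeds $N_0$, and absorb the finitely many uncontrolled initial indices into a positive constant that exists because the weights are non-zero. The only (cosmetic) difference is that the paper's constant $\lambda_0=\min_{1\le k\le m_1n-1}\prod_{\nu=1}^{m_1n}|w_{\nu+k}|$ depends on a $K$-dependent choice of $m_1$, forcing a two-stage selection of exponents $m_1,m_2$ for each target $K$, whereas your constant $\alpha$ is uniform in $m$, yielding the single bound $\inf_{k\ge1}P_{mn_0}(k)\ge\alpha\,C^{m-J_0}\rightarrow\infty$.
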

\begin{proof}
($\Leftarrow$) Obvious.

($\Rightarrow$) 
By hypothesis, there exist $C>1$, $n\ge 1$ and $N\ge 1$ such that \[\inf_{k\ge N}\prod_{\nu=1}^{n}|w_{\nu+k}|\ge C.\] We thus have for any $m\ge 1$,
\begin{equation}
\inf_{k\ge N}\prod_{\nu=1}^{mn}|w_{\nu+k}|\ge C^m. \label{eq cm}
\end{equation}
Let $K>0$. We consider an integer $m_1\ge 1$ such that $m_1n\ge N$ and $C^{m_1}>K$, and another integer $m_2\ge 1$ such that
$C^{m_2}> \frac{K}{\lambda_0}$ where $\lambda_0= \min_{1\le k\le m_1n-1}\prod_{\nu=1}^{m_1n}|w_{\nu+k}|$.
Therefore, since $m_1n\ge N$, we have, using \eqref{eq cm}, that for any $k\ge m_1n$,
\[\prod_{\nu=1}^{(m_1+m_2)n}|w_{\nu+k}|\ge C^{m_1+m_2}>K\]
and that for any $1\le k\le m_1n-1$,
\begin{align*}
\prod_{\nu=1}^{(m_1+m_2)n}|w_{\nu+k}|&= \prod_{\nu=1}^{m_1n}|w_{\nu+k}| 
\prod_{\nu=1}^{m_2n}|w_{\nu+k+m_1n}|\\
&\ge \lambda_0 C^{m_2}>K.
\end{align*}\end{proof}

Since every hypercyclic weighted shift is weakly mixing (see \cite{Salas}), it follows from the two previous results and from Corollary \ref{corban} that we have the following characterization:

\begin{theorem}\label{carac lp}
A hypercyclic weighted shift $B_w$ on the complex space $l^p$ with $1\le p<\infty$ or on the complex space $c_0$ possesses a hypercyclic subspace if and only if
\[\sup_{n\ge 1}\inf_{k\ge 1} \prod_{\nu=1}^n |w_{\nu+k}|\le1\]
or equivalently if and only if
\[\sup_{n\ge 1}\inf_{k\ge 1} \prod_{\nu=1}^n |w_{\nu+k}|<\infty.\]
\end{theorem}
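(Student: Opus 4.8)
The plan is to assemble the statement directly from the three preceding results together with Salas's theorem, since $B_w$ is already assumed hypercyclic and hence essentially no new work is needed beyond a careful chaining of equivalences.

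First I would note that every hypercyclic weighted shift is weakly mixing (see \cite{Salas}) and that $l^p$ for $1\le p<\infty$ and $c_0$ are separable complex Banach spaces; thus Corollary \ref{corban} applies to $T=B_w$ and tells us that $B_w$ has a hypercyclic subspace if and only if
\[\sup_{n\ge 1}\sup_{E\in\text{cofin}}\inf_{x\in E\setminus\{0\}}\frac{\|B_w^nx\|}{\|x\|}\le 1,\]
which is precisely condition (1) of Proposition \ref{prop se}. Next I would apply Proposition \ref{prop se} to pass from this operator condition to the numerical condition (2), namely $\sup_{n}\sup_{N}\inf_{k\ge N}\prod_{\nu=1}^n|w_{\nu+k}|\le 1$, and then Proposition \ref{cond N}, read contrapositively, to rewrite the latter as $\sup_{n}\inf_{k\ge 1}\prod_{\nu=1}^n|w_{\nu+k}|<\infty$. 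Chaining these equivalences gives the characterization in its ``$<\infty$'' form.

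The only step needing a little care --- and the main (indeed essentially only) obstacle --- is the equivalence of the two stated forms ``$\le 1$'' and ``$<\infty$'' for the quantity $B:=\sup_{n}\inf_{k\ge 1}\prod_{\nu=1}^n|w_{\nu+k}|$. For this I would use the elementary fact that an infimum over a larger index set is no larger, so that $\inf_{k\ge 1}\prod_{\nu=1}^n|w_{\nu+k}|\le\sup_{N}\inf_{k\ge N}\prod_{\nu=1}^n|w_{\nu+k}|$ for each $n$, whence $B\le A$ with $A:=\sup_{n}\sup_{N}\inf_{k\ge N}\prod_{\nu=1}^n|w_{\nu+k}|$. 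If $B<\infty$ then Proposition \ref{cond N} gives $A\le 1$, and therefore $B\le A\le 1$; the converse $B\le 1\Rightarrow B<\infty$ is immediate. Hence the two forms are equivalent and each characterizes the existence of a hypercyclic subspace, completing the proof.
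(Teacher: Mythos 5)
Your proof is correct and takes essentially the same route as the paper, which derives Theorem \ref{carac lp} in a single sentence by combining Salas's result that hypercyclic weighted shifts are weakly mixing with Corollary \ref{corban} and Propositions \ref{prop se} and \ref{cond N}. Your explicit verification that the ``$\le 1$'' and ``$<\infty$'' forms coincide --- via $\sup_{n}\inf_{k\ge 1}\prod_{\nu=1}^n|w_{\nu+k}|\le\sup_{n}\sup_{N}\inf_{k\ge N}\prod_{\nu=1}^n|w_{\nu+k}|$ together with the contrapositive of Proposition \ref{cond N} --- merely spells out what the paper leaves implicit.
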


By isomorphism, we can extend this result to weighted versions of complex spaces $l^p$ and $c_0$.
Let $(v_n)_{n\ge 1}$ be a strictly positive weight sequence. We define these spaces by
\begin{align*}
l^p(v)&=\Big\{(x_n)_{n\ge 1}: \sum_{n=1}^{\infty}|x_n|^pv_n<\infty\Big\} \quad \text{for } 1\le p<\infty;\\
c_0(v)&=\{(x_n)_{n\ge 1}: \lim_{n\rightarrow \infty}|x_n|v_n=0\}.\end{align*}

\begin{cor}\label{cor carac lp}
A hypercyclic weighted shift $B_w$ on the complex space $l^p(v)$ with $1\le p<\infty$ possesses a hypercyclic subspace if and only if
\[\sup_{n\ge 1}\inf_{k\ge 1} \prod_{\nu=1}^n |w_{\nu+k}|\Big(\frac{v_{k}}{v_{k+n}}\Big)^{\frac{1}{p}}\le 1\]
or equivalently if and only if
\[\sup_{n\ge 1}\inf_{k\ge 1} \prod_{\nu=1}^n |w_{\nu+k}|\Big(\frac{v_{k}}{v_{k+n}}\Big)^{\frac{1}{p}}< \infty\]
and a hypercyclic weighted shift $B_w$ on the complex space $c_0(v)$ possesses a hypercyclic subspace if and only if
\[\sup_{n\ge 1}\inf_{k\ge 1} \prod_{\nu=1}^n |w_{\nu+k}|\Big(\frac{v_{k}}{v_{k+n}}\Big)\le 1\]
or equivalently if and only if
\[\sup_{n\ge 1}\inf_{k\ge 1} \prod_{\nu=1}^n |w_{\nu+k}|\Big(\frac{v_{k}}{v_{k+n}}\Big)< \infty.\]
\end{cor}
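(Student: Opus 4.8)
The plan is to reduce both cases to Theorem \ref{carac lp} by a change-of-variable isomorphism, exploiting the fact that $l^p(v)$ is isometrically isomorphic to $l^p$ and $c_0(v)$ to $c_0$. First I would introduce, in the case of $l^p(v)$, the map $\Phi\colon l^p(v)\to l^p$ defined by $\Phi((x_n)_n)=(v_n^{1/p}x_n)_n$. A direct computation gives $\|\Phi x\|_{l^p}^p=\sum_n |x_n|^pv_n=\|x\|_{l^p(v)}^p$, so $\Phi$ is an isometric isomorphism; for $c_0(v)$ one uses instead $\Phi((x_n)_n)=(v_nx_n)_n$, which is an isometric isomorphism onto $c_0$. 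In both situations $\Phi$ sends the canonical basis $(e_n)$ of the weighted space to a scalar multiple of the canonical basis $(\tilde e_n)$ of the unweighted space, namely $\Phi(e_n)=v_n^{1/p}\tilde e_n$ (resp. $\Phi(e_n)=v_n\tilde e_n$).

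The key step is to identify the conjugated operator $\tilde B:=\Phi B_w\Phi^{-1}$. Computing $\tilde B\tilde e_n=\Phi B_w\Phi^{-1}\tilde e_n$ directly from the formulas above, one finds that $\tilde B$ is again a weighted shift on $l^p$ (resp. $c_0$), with weights $\tilde w_n=w_n(v_{n-1}/v_n)^{1/p}$ (resp. $\tilde w_n=w_nv_{n-1}/v_n$). Note that these weights are again nonzero, since each $w_n\ne 0$ and $v_n>0$, so that $\tilde B$ is an admissible weighted shift to which Theorem \ref{carac lp} applies.

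Next I would observe that, since $\Phi$ is a topological isomorphism, it carries hypercyclic vectors to hypercyclic vectors and infinite-dimensional closed subspaces to infinite-dimensional closed subspaces, all homeomorphically. Consequently $B_w$ is hypercyclic if and only if $\tilde B$ is, and $B_w$ possesses a hypercyclic subspace if and only if $\tilde B$ does. This transfer is the conceptual content of the phrase ``by isomorphism''; it lets me replace the question for $B_w$ on the weighted space by the already-solved question for $\tilde B$ on the unweighted space.

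It then remains to apply Theorem \ref{carac lp} to $\tilde B$ and to simplify the resulting products. The ratio part telescopes, since $\prod_{\nu=1}^n (v_{\nu+k-1}/v_{\nu+k})=v_k/v_{k+n}$, whence $\prod_{\nu=1}^n|\tilde w_{\nu+k}|=\prod_{\nu=1}^n|w_{\nu+k}|(v_k/v_{k+n})^{1/p}$ in the $l^p(v)$ case, and the same expression with exponent $1$ in place of $1/p$ in the $c_0(v)$ case. Substituting these into the two equivalent conditions of Theorem \ref{carac lp} yields exactly the stated inequalities. There is no genuine obstacle here; the only points requiring care are the bookkeeping of indices in the conjugation formula for $\tilde w_n$ and the telescoping of the weight ratios, both of which are routine once the isomorphism $\Phi$ is written down explicitly.
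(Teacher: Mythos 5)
Your proof is correct and is exactly the argument the paper intends: the corollary is stated there with only the remark ``by isomorphism,'' and your conjugation $\Phi B_w\Phi^{-1}$ with the telescoping weight computation $\prod_{\nu=1}^n|\tilde w_{\nu+k}|=\prod_{\nu=1}^n|w_{\nu+k}|(v_k/v_{k+n})^{1/p}$ is precisely that isomorphism argument made explicit. All the details check out, including the transfer of hypercyclic subspaces under the topological isomorphism and the reduction to Theorem \ref{carac lp}.
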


In their article \cite{LeonMontes}, Le\'on and Montes also show that every hypercyclic bilateral weighted shift on the complex space $l^2(\mathbb{Z})$ possesses a hypercyclic subspace. We can extend this result to complex spaces $l^p(\mathbb{Z})$ with $1\le p<\infty$, to the complex space $c_0(\mathbb{Z})$ and afterwards, to complex weighted spaces $l^p(v,\mathbb{Z})$ and $c_0(v,\mathbb{Z})$.

\begin{theorem}
Every hypercyclic bilateral weighted shift on the complex space $l^p(\mathbb{Z})$ with $1\le p<\infty$ or on the complex space $c_0(\mathbb{Z})$ possesses a hypercyclic subspace.
\end{theorem}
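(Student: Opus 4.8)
The plan is to verify the sufficient condition for possessing a hypercyclic subspace given by Corollary~\ref{corban}, namely that
\[\sup_{n\ge 1}\sup_{E\in \text{cofin}}\inf_{x\in E\backslash\{0\}}\frac{\|B_w^nx\|}{\|x\|}\le 1.\]
This is legitimate because $l^p(\mathbb{Z})$ and $c_0(\mathbb{Z})$ are separable complex Banach spaces and a hypercyclic weighted shift is weakly mixing (see \cite{Salas}). Writing $B_we_n=w_ne_{n-1}$, one computes $\|B_w^ne_j\|=\prod_{i=j-n+1}^{j}|w_i|$, and since $B_w^n$ sends basis vectors with distinct indices to disjointly supported vectors, the argument of Proposition~\ref{prop se} reduces the task to the following: for every fixed $n\ge 1$ there are infinitely many indices $j$ with $\prod_{i=j-n+1}^{j}|w_i|\le 1$. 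Indeed, given a subspace $E$ of codimension $d$, choosing $d+1$ such indices and taking a nonzero combination $x$ of the corresponding $e_j$ lying in $E$ yields $\|B_w^nx\|\le\|x\|$ by disjointness of supports, whence $\inf_{x\in E\backslash\{0\}}\|B_w^nx\|/\|x\|\le 1$.

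To produce these indices I would use hypercyclicity only through its effect on the far left. By the Salas characterization of hypercyclic bilateral shifts (see \cite{Salas}) there is an increasing sequence $(n_k)$ with $\prod_{i=-n_k+1}^{0}|w_i|\to 0$. Setting $a_i=\log|w_i|$ and $S_\ell=\sum_{i=\ell+1}^{0}a_i$ for $\ell\le 0$, this says precisely that $S_{-n_k}\to-\infty$, so that $\liminf_{\ell\to-\infty}S_\ell=-\infty$. The quantity to control satisfies $\log\prod_{i=j-n+1}^{j}|w_i|=S_{j-n}-S_j$, so the whole problem collapses to an elementary statement about one real sequence.

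\emph{Lemma.} If $(S_\ell)_{\ell\le 0}$ satisfies $\liminf_{\ell\to-\infty}S_\ell=-\infty$, then for every $n\ge 1$ the set $\{j\le 0: S_{j-n}\le S_j\}$ is infinite. Indeed, if this set were finite there would be an integer $M$ with $S_{j-n}>S_j$ for all $j\le -M$; then along each of the $n$ arithmetic progressions of common difference $n$ contained in $\{\ell\le -M\}$ the value $S_\ell$ would increase as $\ell\to-\infty$, hence be bounded below, and since finitely many such progressions cover $\{\ell\le -M\}$ the sequence $S$ would be bounded below there, contradicting $\liminf_{\ell\to-\infty}S_\ell=-\infty$.

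Applying the Lemma produces, for every $n$, infinitely many $j$ with $S_{j-n}\le S_j$, that is $\prod_{i=j-n+1}^{j}|w_i|\le 1$, which is exactly the input required in the first paragraph; Corollary~\ref{corban} then furnishes a hypercyclic subspace. The argument is identical on $c_0(\mathbb{Z})$, the estimate $\|B_w^nx\|\le\|x\|$ for disjointly supported combinations being equally valid for the supremum norm. I expect the Lemma to be the main obstacle: a priori there is no reason for the products over short windows far to the left to be small, since hypercyclicity only forces the long windows anchored at a fixed point to decay, and it is the pigeonhole over residues modulo $n$ that converts the decay of this one anchored family into infinitely many short windows of product at most $1$.
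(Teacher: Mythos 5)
Your proposal is correct, and its skeleton coincides with the paper's: both routes pass through Corollary~\ref{corban} (legitimate, since hypercyclic bilateral weighted shifts are weakly mixing), both use only the left-tail consequence of the Salas-type characterization, namely $\prod_{\nu=0}^{n_k-1}|w_{-\nu}|\to 0$, and both hinge on chaining length-$n$ windows along residues modulo $n$. Where you genuinely diverge is in the direction and bookkeeping of that chaining step. The paper argues by contradiction: assuming no hypercyclic subspace, Corollary~\ref{corban} together with the bilateral analogue of the equivalence in Proposition~\ref{prop se} yields $C>1$, $n$, $N$ with every length-$n$ window to the left of $-N$ having product $>C$, and then the anchored products $\prod_{\nu=0}^{n_k-1}|w_{-N-\nu}|$ grow like $C^{m_k}$, where writing $n_k=m_kn+p$ and dividing out the remainder window requires the bound $\sup_{l\in\mathbb{Z}}|w_l|<K$ furnished by continuity of $B_w$. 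You instead run the contrapositive directly through your partial-sum lemma: $S_{-n_k}\to-\infty$ forces, for each fixed $n$, infinitely many $j\le 0$ with $S_{j-n}\le S_j$, by monotonicity along the $n$ arithmetic progressions covering a left half-line. This buys three small but real simplifications: you never need the boundedness of the weight sequence, you never need the remainder bookkeeping, and you only use the easy vector-producing direction of the Proposition~\ref{prop se} reduction (choosing $d+1$ good indices and a nonzero combination in $E$, with the disjoint-support estimate $\|B_w^nx\|\le\|x\|$ valid in both $l^p(\mathbb{Z})$ and $c_0(\mathbb{Z})$), rather than the full equivalence the paper invokes with ``as in Proposition~\ref{prop se}.'' Your index computations ($\|B_w^ne_j\|=\prod_{i=j-n+1}^{j}|w_i|$ and $S_{j-n}-S_j=\sum_{i=j-n+1}^{j}\log|w_i|$ for $j\le 0$) are correct, so the argument is complete; the paper's version, in exchange, stays structurally parallel to its Proposition~\ref{prop se} and Proposition~\ref{cond N} machinery for the unilateral case.
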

\begin{proof}
Let $B_w$ be a hypercyclic bilateral weighted shift defined by $B_w e_n=w_ne_{n-1}$ for any $n\in \mathbb{Z}$. By the characterization of hypercyclic bilateral weighted shifts (see \cite{Karl2}, \cite{Salas}), we know that there exists an increasing sequence $(n_k)_{k\ge 1}$ of positive integers such that for any $j\in \mathbb{Z}$,
\begin{equation}
\lim_{k\rightarrow \infty} \prod_{\nu=0}^{n_k-1}|w_{j-\nu}|=0.
\label{max 0}
\end{equation}
Suppose that $B_w$ does not possess any hypercyclic subspace. Then, by Corollary \ref{corban}, we have
\[\sup_{n\ge 1}\sup_{E\in\text{cofin}}\inf_{x\in E\backslash \{0\}} \frac{\|B_w^nx\|}{\|x\|}> 1.\]
It is not difficult to see that, as in Proposition \ref{prop se}, the above condition is equivalent to having
\[\sup_{n\ge 1}\sup_{N\ge 1} \inf_{|k|\ge N} \prod_{\nu=1}^n |w_{\nu+k}|> 1.\]
There thus exist $C>1$, $n\ge 1$ and $N\ge 1$ such that for any $j\ge N$, we have
\begin{equation}
\prod_{\nu=0}^{n-1} |w_{-j-\nu}|>C.
\label{min}
\end{equation}
Since $B_w$ is continuous, there also exists $K>0$ such that $\sup_{l\in \mathbb{Z}}|w_l|<K$ and if $n_k=m_kn+p$ with $0\le p\le n-1$ then we have by using \eqref{min}
\[\prod_{\nu=0}^{n_k-1}|w_{-N-\nu}|=\frac{\prod_{l=0}^{m_k}\prod_{\nu=0}^{n-1}|w_{-N-ln-\nu}|}{\prod_{\nu=p}^{n-1}|w_{-N-m_kn-\nu}|}>\frac{C^{m_k+1}}{K^{n-p}}.\]
This is a contradiction with the equation \eqref{max 0}. The operator $B_w$ possesses thus a hypercyclic subspace.
\end{proof}

\begin{cor}
Every hypercyclic bilateral weighted shift on the complex space $l^p(v,\mathbb{Z})$ with $1\le p<\infty$ or on the complex space $c_0(v,\mathbb{Z})$ possesses a hypercyclic subspace.
\end{cor}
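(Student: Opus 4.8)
The plan is to reduce the weighted case to the unweighted bilateral case settled in the preceding theorem by means of a diagonal isomorphism, exactly as Corollary~\ref{cor carac lp} was deduced from Theorem~\ref{carac lp}. The point is that the preceding theorem asserts that \emph{every} hypercyclic bilateral weighted shift on the unweighted space has a hypercyclic subspace, so it suffices to exhibit $B_w$ as conjugate, through an isomorphism, to such a shift.

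First I would introduce the isometric isomorphism $\phi:l^p(v,\mathbb{Z})\rightarrow l^p(\mathbb{Z})$ defined by $\phi((x_n)_n)=(v_n^{1/p}x_n)_n$, and, in the $c_0$ case, the isomorphism $\phi:c_0(v,\mathbb{Z})\rightarrow c_0(\mathbb{Z})$ defined by $\phi((x_n)_n)=(v_nx_n)_n$; in both cases $\phi$ is onto and isometric by the very definition of the norms. A direct computation on the canonical basis, using $\phi^{-1}(e_n)=v_n^{-1/p}e_n$ (resp. $v_n^{-1}e_n$), shows that the conjugate operator $\widetilde{B}:=\phi B_w\phi^{-1}$ is again a bilateral weighted shift on the unweighted space $l^p(\mathbb{Z})$ (resp. $c_0(\mathbb{Z})$), with weight sequence
\[\widetilde{w}_n=w_n\Big(\frac{v_{n-1}}{v_n}\Big)^{1/p}\quad\Big(\text{resp. }\widetilde{w}_n=w_n\frac{v_{n-1}}{v_n}\Big).\]

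Next I would invoke the invariance of both relevant properties under conjugation by an isomorphism. Since $\phi$ is a surjective isomorphism, $x$ is hypercyclic for $B_w$ if and only if $\phi(x)$ is hypercyclic for $\widetilde{B}$, and $M$ is a hypercyclic subspace for $B_w$ if and only if $\phi(M)$ is a hypercyclic subspace for $\widetilde{B}$: indeed $\phi$ sends infinite-dimensional closed subspaces to infinite-dimensional closed subspaces and preserves the property that every nonzero vector be hypercyclic. In particular the hypercyclicity of $B_w$ forces $\widetilde{B}$ to be a hypercyclic bilateral weighted shift on $l^p(\mathbb{Z})$ (resp. $c_0(\mathbb{Z})$).

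Finally I would apply the preceding theorem to $\widetilde{B}$: being a hypercyclic bilateral weighted shift on the unweighted space, it possesses a hypercyclic subspace, and transporting this subspace back by $\phi^{-1}$ yields a hypercyclic subspace for $B_w$. The argument requires no new estimate, and the only step needing care is the verification of the formula for $\widetilde{w}_n$; this bookkeeping is the sole (and entirely routine) obstacle, the real content being that the class of bilateral weighted shifts is stable under diagonal conjugation and that the unweighted theorem already covers all of them.
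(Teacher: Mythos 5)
Your proposal is correct and is exactly the argument the paper intends: the corollary is stated without proof right after the unweighted bilateral theorem, relying on the same diagonal conjugation $\phi((x_n)_n)=(v_n^{1/p}x_n)_n$ (resp.\ $(v_nx_n)_n$) already invoked ``by isomorphism'' to pass from Theorem~\ref{carac lp} to Corollary~\ref{cor carac lp}, and your weight formula $\widetilde{w}_n=w_n(v_{n-1}/v_n)^{1/p}$ and the transport of hypercyclicity and of hypercyclic subspaces under the isomorphism are the right bookkeeping.
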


\section{Weighted shifts on certain Köthe sequence spaces}
Let $A=(a_{j,k})_{j,k\ge 1}$ be a matrix such that for any $j,k\ge 1$, we have $a_{j,k}>0$ and $a_{j,k}\le a_{j+1,k}$.
We define the (real or complex) Köthe sequence spaces $\lambda^p(A)$ with $1\le p<\infty$ and $c_0(A)$ by
\begin{align*}
\lambda^p(A)&:=\Big\{(x_k)_k\in \mathbb{K}^{\mathbb{N}} : p_j((x_k)_k)=\Big(\sum_{k=1}^{\infty}|x_ka_{j,k}|^p\Big)^{\frac 1 p}<\infty, \ j\ge 1\Big\},\\
c_0(A)&:=\{(x_k)_k\in \mathbb{K}^{\mathbb{N}}: \lim_{k\rightarrow \infty}|x_k|a_{j,k}=0, \ j\ge 1\} \text{ with } p_j((x_k)_k)= \max_k|x_k|a_{j,k}.
\end{align*}

These spaces are Fréchet spaces with a continuous norm and the sequences of norms $(p_j)$ are increasing (see \cite{Meise} for more details about Köthe sequence spaces). Therefore we deduce from Corollary \ref{corfre} the following result:

\begin{theorem}\label{Kothe 1}
Let $B_w:X\rightarrow X$ be a weighted shift, where $X=\lambda^p(A)$ with $1\le p<\infty$ or $X=c_0(A)$.
If there exists an integer $J\ge 1$ such that for any $j\ge 1$, we have
\[\sup_{n\ge1}\sup_{N\ge 1}\inf_{k\ge N}\frac{p_{J}(B_w^ne_k)}{p_{j}(e_{k})}> 1\]
then $B_w$ does not possess any hypercyclic subspace.
\end{theorem}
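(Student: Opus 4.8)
The plan is to read the hypothesis as exactly the input required by Corollary~\ref{corfre}. Recall that corollary says that on a Fr\'echet space with a continuous norm an operator $T$ has no hypercyclic subspace provided there is one index $J$ such that, for every $j\ge 1$,
\[
\sup_{n\ge 1}\ \sup_{E\in\text{cofin}}\ \inf_{x\in E\setminus\ker(p_j)}\frac{p_J(T^nx)}{p_j(x)}>1 .
\]
Here $X=\lambda^p(A)$ or $X=c_0(A)$ does carry a continuous norm (each $p_j$ is a norm, since $a_{j,k}>0$), and the $(p_j)$ are increasing and define the topology, so the corollary is applicable with $T=B_w$. Hence the whole task is to promote the hypothesis, which only controls the basis vectors $e_k$, into the displayed estimate over a cofinite subspace and over \emph{arbitrary} vectors.

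First I would simplify the two suprema in the hypothesis: since $\inf_{k\ge N}$ is non-decreasing in $N$, the quantity $\sup_N\inf_{k\ge N}$ equals $\liminf_k$, so for each fixed $j$ the assumption provides an exponent $n=n(j)$, a constant $C=C_j>1$, and a threshold $N=N(j)$, which may be taken as large as we wish (in particular $N>n$), such that
\[
\frac{p_J(B_w^n e_k)}{p_j(e_k)}\ge C\qquad\text{for every }k\ge N .
\]
Next I would compute the shift coordinatewise, $(B_w^nx)_m=\bigl(\prod_{\nu=1}^n w_{m+\nu}\bigr)x_{m+n}$, and reindex by $k=m+n$; this makes the defining norm factor through precisely these basis ratios, namely
\[
p_J(B_w^nx)^p=\sum_{k>n}\Bigl(\tfrac{p_J(B_w^n e_k)}{p_j(e_k)}\Bigr)^p|x_k|^p a_{j,k}^p
\]
in the $\lambda^p(A)$ case (with a maximum replacing the $\ell^p$-sum in the $c_0(A)$ case). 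I would then take $E=E_N:=\{x:x_1=\cdots=x_{N-1}=0\}$, which is closed of codimension $N-1$ as the common kernel of finitely many continuous coordinate functionals. For $x\in E_N$ the sum (resp.\ maximum) is restricted to $k\ge N$, where every ratio is at least $C$; factoring $C$ out gives $p_J(B_w^nx)\ge C\,p_j(x)$ for all $x\in E_N$. As $\ker(p_j)=\{0\}$, this yields $\inf_{x\in E_N\setminus\ker(p_j)}p_J(B_w^nx)/p_j(x)\ge C>1$.

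Since this holds for every $j$ with one and the same $J$, the criterion of Corollary~\ref{corfre} is met and $B_w$ has no hypercyclic subspace. I do not expect a deep obstacle: the argument is the K\"othe-space analogue of the implication $\neg(2)\Rightarrow\neg(1)$ of Proposition~\ref{prop se}, and the only points requiring attention are the bookkeeping of the shift indices in the reindexing step and checking that the $c_0(A)$ estimate goes through verbatim once the $\ell^p$-sum is replaced by a supremum.
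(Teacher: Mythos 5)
Your proof is correct and is essentially the paper's own argument: extract $C_j>1$, $n_j$, $N_j$ from the hypothesis, pass to the tail subspace $E_{N_j}=\{x: x_1=\dots=x_{N_j-1}=0\}$, use the coordinatewise decomposition of $p_J(B_w^{n_j}x)$ through the basis ratios (with the $\ell^p$-sum replaced by a maximum for $c_0(A)$), and invoke Corollary~\ref{corfre}. The only cosmetic difference is your explicit remark that $\sup_N\inf_{k\ge N}$ is a $\liminf$ and that the witnessing $N$ may be taken larger than $n$, which the paper leaves implicit.
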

\begin{proof}
By hypothesis, there exists $J\ge 1$ such that for any $j\ge 1$, there exist $C_j>1$, $n_j\ge 1$, $N_j\ge 1$ such that we have for any $k\ge N_j$,
\[\frac{p_{J}(B_w^{n_j}e_k)}{p_{j}(e_{k})}> C_j.\]
The condition of Corollary \ref{corfre} is then satisfied because for any $j\ge 1$, for any $x \in E_{N_j}:=\{x\in X:x=\sum_{k=N_j}^{\infty}x_ke_k\}$, $x\ne 0$, we have if $X=\lambda^p(A)$,
\begin{align*}
p_{J}(B^{n_j}_{w} x)^p&= p_{J}\Big(B^{n_j}_w\Big(\sum_{k=N_j}^{\infty}x_ke_k\Big)\Big)^p
= \sum_{k=N_j}^{\infty}|x_{k}|^p p_{J}(B_w^{n_j}e_k)^p\\
&> \sum_{k=N_j}^{\infty}|x_{k}|^p C^p_j p_{j}(e_{k})^p
= C^p_jp_j(x)^p,
\end{align*}
and we have if $X=c_0(A)$,
\begin{align*}
p_{J}(B^{n_j}_{w} x)&= p_{J}\Big(B^{n_j}_w\Big(\sum_{k=N_j}^{\infty}x_ke_k\Big)\Big)
= \max_{k\ge N_j}|x_{k}| p_{J}(B_w^{n_j}e_k)\\
&> \max_{k\ge N_j}|x_{k}| C_j p_{j}(e_{k})
= C_jp_j(x).
\end{align*}
\end{proof}

For the other implication, we cannot proceed as in the proof of Proposition \ref{prop se} because unlike Corollary \ref{corban}, Corollary \ref{corfre} does not give us an equivalence. We will thus prove that if for any $j\ge 1$, there exists $m_j\ge 1$ such that 
\[\sup_{n\ge1}\sup_{N\ge 1}\inf_{k\ge N}\frac{p_{j}(B_w^ne_k)}{p_{m_j}(e_{k})}<\infty\]
then $B_w$ satisfies the condition of Theorem \ref{thm M}. For this purpose, we need the following result whose idea of proof is the same as for Theorem \ref{thmfrechet}.
This result is stated for Fréchet sequence spaces which are Fréchet spaces of sequences that are continuously embedded in $\omega$. Obviously Köthe sequence spaces are Fréchet sequence spaces.
\begin{lemma}\label{lemma 4}
Let $X$ be a Fréchet sequence space with a continuous norm, $(p_j)$ an increasing sequence of norms defining the topology of $X$ and $B_w:X\rightarrow X$ a weighted shift.
The following assertions are equivalent:
\begin{enumerate}[\upshape (i)]
\item there exists $J\ge 1$ such that for any $j\ge 1$, we have
\[\sup_{n\ge1}\sup_{N\ge 1}\inf_{k\ge N}\frac{p_{J}(B_w^ne_{k})}{p_{j}(e_{k})}=\infty;\]
\item there exists $J\ge 1$ such that for any $j\ge 1$, we have
\[\sup_{n\ge1}\sup_{N\ge 1}\inf_{k\ge N}\max_{m\le n}\frac{p_{J}(B_w^me_{k})}{p_{j}(e_{k})}> 1.\]
\end{enumerate}
\end{lemma}
\begin{proof}
The implication (i)$\Rightarrow$(ii) is evident. We show the implication (ii)$\Rightarrow$(i). We know by hypothesis that there exist $j_0\ge1$ such that for any $j\ge 1$, we have
\[\sup_{n\ge1}\sup_{N\ge 1}\inf_{k\ge N}\max_{m\le n}\frac{p_{j_0}(B_w^me_{k})}{p_{j}(e_{k})}> 1.\]
There thus exist $C_{j_0}>1$, $n_{j_0}\ge 1$, $N_{j_0}\ge 1$ such that for any $k\ge N_{j_0}$, there exists $1\le m\le n_{j_0}$ such that we have \begin{equation}\frac{p_{j_0}(B_w^{m}e_{k})}{p_{j_0}(e_{k})}> C_{j_0}.\label{pj0}\end{equation}
Let $j\ge 1$. There are also $C_{j}>1$, $n_{j}\ge1$, $N_{j}\ge 1$ such that for any $k\ge N_{j}$, there exists $m\le n_{j}$ such that we have
\begin{equation}
\frac{p_{j_0}(B_w^me_{k})}{p_{j}(e_{k})}> C_{j}.
\label{pj}
\end{equation}
Let $k\ge N_{j_0}+N_j+ln_{j_0}+n_j$ and $n=ln_{j_0}+n_j$ with $l\ge1$.
We deduce from \eqref{pj} that there exists $m_1\le n_j$ such that
\[\frac{p_{j_0}(B_w^{m_1}e_{k})}{p_{j}(e_{k})}> C_{j}\]
and we deduce by repeatedly applying \eqref{pj0} to $B_w^{m_1}e_k$ that there exists $n\le m_2< n+n_{j_0}$ such that
\[\frac{p_{j_0}(B_w^{m_2}e_{k})}{p_{j_0}(B_w^{m_1}e_{k})}> C^l_{j_0}.\]
Since $X$ is a Fréchet sequence space and $B_w$ maps $X$ into itself, the weighted shift $B_w$ is continuous. There thus exist $K>0$ and $J\ge 1$ such that for any $0\le p< n_{j_0}$, we have
\[\text{\ } p_{j_0}(B_w^px)\le K p_{J}(x)\quad\text{for any $x\in X$}\]
and therefore for any $0\le p< n_{j_0}$, for any $m\ge p$, we have
\begin{equation*}
p_J(B_w^{m-p} x)\ge \frac{1}{K}p_{j_0}(B_w^{m}x) \quad\text{for any $x\in X$}.
\end{equation*}
It follows that we have
\begin{align*}
p_{J}(B_w^ne_k)&= 
p_{J}(B_w^{m_2-(m_2-n)}e_k)
\ge \frac{1}{K}p_{j_0}(B_w^{m_2}e_k)\\
&> \frac{C_{j_0}^l}{K}p_{j_0}(B_w^{m_1}e_k)
> \frac{C_{j_0}^lC_j}{K}p_j(e_k).
\end{align*}
Since the choice of $J$ does not depend of $j$ and $C_{j_0}^l\rightarrow \infty$ when $l\rightarrow \infty$, we have the desired result.
\end{proof}

To prove that a hypercyclic weighted shift $B_w$ possesses a hypercyclic subspace if for any $j\ge 1$, there exists $m_j\ge 1$ such that 
\[\sup_{n\ge1}\sup_{N\ge 1}\inf_{k\ge N}\frac{p_{j}(B_w^ne_k)}{p_{m_j}(e_{k})}<\infty,\]
we need to suppose an additional condition on the norms $(p_j)_{j\ge 1}$ and thus on the matrix $A$. This condition will permit us to transpose the inequalities for a couple of seminorms $(p_J, p_{m})$ to others couples $(p_j,p_{m_j})$.

\begin{theorem}\label{Kothe 2}
Let $B_w:X\rightarrow X$ be a hypercyclic weighted shift, where $X=\lambda^p(A)$ with $1\le p<\infty$ or $X=c_0(A)$.
Suppose that there exists $J\ge 1$ such that for any $m\ge 1$, for any $j\ge 1$, there exists $m_j\ge 1$ such that \begin{equation*}
\sup_{n\ge 0}\limsup_{k>n} \frac{p_{j}(e_{k-n})p_{m}(e_{k})}{p_{J}(e_{k-n})p_{m_{j}}(e_{k})}<\infty.
\end{equation*}
If there exists $m\ge 1$ such that we have
\[\sup_{n\ge1}\sup_{N\ge 1}\inf_{k\ge N}\frac{p_{J}(B_w^ne_k)}{p_m(e_{k})}<\infty\]
then $B_w$ possesses a hypercyclic subspace.
\end{theorem}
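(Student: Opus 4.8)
The plan is to reduce everything to Theorem~\ref{thm M}. Since $B_w$ is hypercyclic, it satisfies the Hypercyclicity Criterion, and hence condition~$(C)$, for some increasing sequence of powers (remark after Definition~\ref{def C fre}); moreover, as the proof of Theorem~\ref{thm M0} records that the density in condition~$(C)$ survives on tails, we are free to thin the sequence of powers that we eventually use. It therefore suffices to produce a non-increasing sequence of infinite-dimensional closed subspaces $(M_j)$ together with an increasing sequence of powers $(n_j)$ with $n_j\to\infty$, such that condition~$(C)$ holds for $(n_j)$ and the norm estimate of Theorem~\ref{thm M} is satisfied. I would take each $M_j$ to be the closed span of the basis vectors $\{e_k:k\in S_j\}$ for a nested family of infinite index sets $S_1\supseteq S_2\supseteq\cdots$. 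Because $B_w^{n_j}$ sends distinct basis vectors to scalar multiples of distinct basis vectors, every seminorm $p_n$ decomposes over the support exactly as in the proof of Theorem~\ref{Kothe 1}, so the inequality $p_n(B_w^{n_j}x)\le C_n\,p_{m(n)}(x)$ for $x\in M_j$ is equivalent to the pointwise bound $p_n(B_w^{n_j}e_k)\le C_n\,p_{m(n)}(e_k)$ for every $k\in S_j$.

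The first step is the \emph{transposition} announced just before the statement. For $k>N$ one has $B_w^{N}e_k=\big(\prod_{\nu=0}^{N-1}w_{k-\nu}\big)e_{k-N}$, whence
\[
\frac{p_j(B_w^{N}e_k)}{p_{m_j}(e_k)}=\frac{p_J(B_w^{N}e_k)}{p_m(e_k)}\cdot\frac{p_j(e_{k-N})\,p_m(e_k)}{p_J(e_{k-N})\,p_{m_j}(e_k)}.
\]
The last factor is exactly the quantity in the first hypothesis (with $n=N$), so that hypothesis bounds it by a constant $D_j$ for all sufficiently large $k$, uniformly in $N$. Combining this with the second hypothesis, which gives $\inf_{k\ge \mathcal{N}}p_J(B_w^{N}e_k)/p_m(e_k)\le C$ for every power $N$ and every $\mathcal{N}$, I obtain for each seminorm index $j$ an integer $m_j$ and a constant $C_j:=D_jC$ such that for every power $N$ there are arbitrarily large $k$ with $p_j(B_w^{N}e_k)\le C_j\,p_{m_j}(e_k)$. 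The crucial point is that the admissible indices are the \emph{same} for all $j$ at once — namely those that are good for the single pair $(p_J,p_m)$ and lie beyond the thresholds produced by the first hypothesis — so a single good index $k$ simultaneously controls every seminorm.

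The second step is the recursive construction of the powers $n_j$ and the sets $S_j$. Writing $G(N):=\{k:p_J(B_w^{N}e_k)\le C\,p_m(e_k)\}$, which is infinite for every $N$ by the second hypothesis, the discussion above shows that the estimate required in Theorem~\ref{thm M} holds on $M_j$ with $k(n)=n$, $m(n)=m_n$ and $C_n$ the constant of the first step, as soon as $S_j\subseteq G(n_j)$ up to finitely many small indices. Since $M_j\supseteq M_{j+1}$ forces $S_j\supseteq S_{j+1}$, each selected index must lie in the good set of every power used so far; concretely I would build $n_1<n_2<\cdots$ and $k_1<k_2<\cdots$ simultaneously, at stage $i$ first choosing a new power $n_i>n_{i-1}$ whose good set $G(n_i)$ still meets the infinite index set constructed so far in an infinite set, and then picking a new large index $k_i\in\bigcap_{j\le i}G(n_j)$, finally setting $S_j=\{k_i:i\ge j\}$. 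Throughout, the large powers not yet used leave enough room to also satisfy the approximation requirement \eqref{prop k} appearing in the proof of Theorem~\ref{thm M}, so that condition~$(C)$ holds for $(n_j)$ and $n_j\to\infty$. Theorem~\ref{thm M} then produces a hypercyclic subspace.

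The main obstacle is precisely this recursive selection. The second hypothesis only says that each $G(N)$ is infinite — a $\liminf$ statement for each fixed power — and nothing a priori forces finitely many of these sets to have an unbounded intersection, which is exactly what the nesting $S_j\supseteq S_{j+1}$ demands. The delicate part of the argument is therefore to exploit the \emph{uniformity in $N$} of the second hypothesis to choose the powers $n_i$ adaptively so that every initial segment $\{n_1,\dots,n_i\}$ retains an unbounded common good set while $n_j$ still tends to infinity. The transposition of the first step is what makes a single nested family $(M_j)$ serve all seminorms at once: once the indices are controlled for the single pair $(p_J,p_m)$, they are automatically controlled in every $p_n$, so no further coordination across seminorms is needed.
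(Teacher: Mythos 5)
Your skeleton is the paper's own: reduce to Theorem \ref{thm M}, use the transposition identity $p_j(B_w^n e_k)/p_{m_j}(e_k)=\frac{p_J(B_w^n e_k)}{p_m(e_k)}\cdot\frac{p_j(e_{k-n})p_m(e_k)}{p_J(e_{k-n})p_{m_j}(e_k)}$ so that a single nested family of closed spans of basis vectors serves all seminorms at once, and verify the estimate of Theorem \ref{thm M} coordinatewise in $\lambda^p(A)$ or $c_0(A)$ -- all of this matches the published argument. But the step you yourself flag as the ``main obstacle'' is a genuine gap, and it is exactly the step the paper fills with Lemma \ref{lemma 4}. The second hypothesis only makes each good set $G(N)$ unbounded; the nesting $S_j\supseteq S_{j+1}$ requires, for every $l$, an unbounded set of indices simultaneously good for all the powers $n_1,\dots,n_l$, and no adaptive choice of powers can manufacture this from the infinitude of the individual $G(N)$: once $S=\bigcap_{j<i}G(n_j)$ is fixed, nothing in the hypothesis prevents $G(N)\cap S$ from being finite for every admissible further power $N$ (and the powers are not even free, since Theorem \ref{thm M} needs condition $(C)$ along the sequence used, so you may only thin the Hypercyclicity Criterion sequence, with the density repair of \eqref{prop k}). ``Exploit the uniformity in $N$'' names the problem; it is not an argument.

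The paper closes this gap by contradiction, exploiting the semigroup structure of the shift. After the transposition, the hypotheses give, for every $j$, some $m_j$ with $\sup_n\sup_N\inf_{k\ge N}p_j(B_w^ne_k)/p_{m_j}(e_k)<\infty$, and Lemma \ref{lemma 4} (whose proof mimics Theorem \ref{thmfrechet}) upgrades this, in contrapositive form, to the max-version: for every $j$ there is $j'$ with $\sup_n\sup_N\inf_{k\ge N}\max_{i\le n}p_j(B_w^ie_k)/p_{j'}(e_k)\le 1$. The mechanism is that if the max-version failed, then beyond some threshold \emph{every} index $k$ would admit a one-step growth $p_{j_0}(B_w^me_k)>C\,p_{j_0}(e_k)$ for some bounded $m$; since $B_w^me_k$ is a scalar multiple of $e_{k-m}$, these steps chain along the backward orbit and, together with continuity to adjust the exponent, force $p_J(B_w^ne_k)\ge \frac{C^lC_j}{K}p_j(e_k)$ for all large $k$, contradicting the assumed finiteness. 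With the max-version in hand (applied with $j=J$, yielding $m_J$ and \eqref{maj pj0}), a single sufficiently large index $k_l$ is good simultaneously for all powers $n_{l'}$, $l'\le l$, of the Hypercyclicity Criterion sequence, and the nested subspaces $M_{l'}=\overline{\text{span}}\{e_{k_l}:l\ge l'\}$ satisfy Theorem \ref{thm M} exactly as you envisage. So your outline is on the right track, but without Lemma \ref{lemma 4} or an equivalent bootstrapping argument the recursion at the heart of your construction does not go through.
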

\begin{proof}
By our assumptions, we know that there exists $m\ge 1$ such that for any $j\ge 1$, there exists $m_j\ge 1$ such that we have
\[\sup_{n\ge1}\sup_{N\ge 1}\inf_{k\ge N}\frac{p_{j}(B_w^ne_k)}{p_{m_{j}}(e_{k})}=
\sup_{n\ge1}\sup_{N\ge n+1}\inf_{k\ge N}\frac{p_{J}(B_w^ne_k)}{p_{m}(e_{k})}
\frac{p_{j}(e_{k-n})p_{m}(e_{k})}{p_{J}(e_{k-n})p_{m_{j}}(e_{k})}
 <\infty.\]
By the previous lemma, the above condition is equivalent to having, for any $j\ge 1$, the existence of an integer $j'\ge 1$ such that
\[\sup_{n\ge1}\sup_{N\ge 1}\inf_{k\ge N}\max_{i\le n}\frac{p_{j}(B_w^ie_k)}{p_{j'}(e_{k})}\le 1.\]
In particular, we can choose $m_J\ge 1$ such that 
\begin{equation}
\sup_{n\ge1}\sup_{N\ge 1}\inf_{k\ge N}\max_{i\le n}\frac{p_{J}(B_w^ie_k)}{p_{m_{J}}(e_{k})}\le 1.
\label{maj pj0}
\end{equation}
Let $(n_l)$ be an increasing sequence of integers such that $B_w$ satisfies the Hypercyclicity Criterion for $(n_l)$. Such a sequence exists because every hypercyclic weighted shift is weakly mixing (see \cite{Karl2}). Then, using the relation \eqref{maj pj0}, we have that for any $l\ge 1$, for any $N\ge 1$, there exists $k\ge N$ such that for any $l'\le l$, we have \[\frac{p_{J}(B_w^{n_{l'}}e_k)}{p_{m_{J}}(e_{k})}\le 2.\]
If we let $C_J=2$ and for any $j\ne J$, we let $C_j$ be a constant and $m_j$ an integer satisfying
\begin{equation*}
\sup_n\limsup_{k>n} \frac{p_{j}(e_{k-n})p_{m_{J}}(e_{k})}{p_{J}(e_{k-n})p_{m_{j}}(e_{k})}<C_j,
\end{equation*}
then it is not difficult to construct an increasing sequence of integers $(k_l)_{l\ge 1}$ such that for any $l\ge 1$, we have $n_{l}<k_l$ and for any $l\ge 1$, we have
\begin{align}
&\text{for any\ } l'\le l, \quad\frac{p_{J}(B_w^{n_{l'}}e_{k_l})}{p_{m_{J}}(e_{k_l})}\le 2;\label{eq esp 1}\\
&\text{for any\ } j\le l'\le l,\quad \frac{p_{j}(e_{k_l-n_{l'}})p_{m_{J}}(e_{k_l})}{p_{J}(e_{k_l-n_{l'}})p_{m_{j}}(e_{k_l})}<C_{j}\label{eq esp 2}.
\end{align}
Therefore we have by multiplying \eqref{eq esp 1} and \eqref{eq esp 2} that for any $l\ge 1$, for any $j\le l'\le l$,
\[\frac{p_{j}(B^{n_{l'}}_w e_{k_l})}{p_{m_{j}}(e_{k_l})}\le 2C_{j}.\]
If for any $l'\ge 1$, we let $M_{l'}$ be the closed linear span of $(e_{k_{l}})_{l\ge l'}$ then for any $j\le l'$, for any $x\in M_{l'}$, we have if $X=\lambda^p(A)$,
\begin{align*}
p_{j}(B^{n_{l'}}_wx)^p &= p_{j}\Big(B^{n_{l'}}_w\Big(\sum_{l=l'}^{\infty}x_{k_l} e_{k_l}\Big)\Big)^p
= \sum_{l=l'}^{\infty}|x_{k_l}|^p p_{j}(B^{n_{l'}}_w e_{k_l})^p\\
&\le \sum_{l=l'}^{\infty}|x_{k_l}|^p (2C_{j})^p p_{m_{j}}(e_{k_l})^p= (2C_{j})^p p_{m_{j}}(x)^p
\end{align*}
and we have if $X=c_0(A)$,
\begin{align*}
p_{j}(B^{n_{l'}}_wx) &= p_{j}\Big(B^{n_{l'}}_w\Big(\sum_{l=l'}^{\infty}x_{k_l} e_{k_l}\Big)\Big)
= \max_{l\ge l'}|x_{k_l}| p_{j}(B^{n_{l'}}_w e_{k_l})\\
&\le \max_{l\ge l'}|x_{k_l}| (2C_{j}) p_{m_{j}}(e_{k_l})= 2C_{j} p_{m_{j}}(x).
\end{align*}
Theorem \ref{thm M} implies the desired result.
\end{proof}

Theorems \ref{Kothe 1} and \ref{Kothe 2} allow us to characterize for the first time which weighted shifts on certain Fréchet spaces possess a hypercyclic subspace.
Before stating this characterization, we remark that we can simplify this one as in Proposition \ref{cond N}.
\begin{prop}\label{4.4}
Let $(w_n)_{n\ge 1}$ be a sequence of non-zero scalars and $(a_{j,k})_{j,k\ge 1}$ a family of positive numbers. For any $j\ge 1$, we have
\begin{align*} &\forall m\ge 1,\quad \sup_{n\ge1}\sup_{N\ge 1}\inf_{k\ge N}\frac{\prod_{\nu=1}^{n}|w_{\nu + k}|a_{j,k}}{a_{m,n+k}}> 1\\
 &\Leftrightarrow \forall m\ge 1,\quad \sup_{n\ge1}\inf_{k\ge 1}\frac{\prod_{\nu=1}^{n}|w_{\nu + k}|a_{j,k}}{a_{m,n+k}}=\infty.\end{align*}
\end{prop}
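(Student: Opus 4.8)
The statement is the analogue of Proposition \ref{cond N} adapted to the Köthe-space setting, where the product $\prod_{\nu=1}^n|w_{\nu+k}|$ is replaced by the weighted quantity $\prod_{\nu=1}^n|w_{\nu+k}|\,a_{j,k}/a_{m,n+k}$, which is precisely the expression computing $p_j(e_k)$ versus $p_m(B_w^n e_k)$ ratios. The plan is to mirror the structure of Proposition \ref{cond N} exactly, fixing $j$ throughout and handling the claim "for all $m$" by noting that the right-hand condition is stated with the same universal quantifier on $m$.

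**Approach.** The backward implication ($\Leftarrow$) is immediate and I would dispose of it in one line: if the infimum over $k\ge 1$ is infinite for some $n$, then a fortiori the supremum over $N$ of the infimum over $k\ge N$ exceeds $1$ (indeed is infinite). For the forward implication ($\Rightarrow$), fix $m\ge 1$ and suppose we have $C>1$, $n\ge 1$, $N\ge 1$ with
\[
\inf_{k\ge N}\frac{\prod_{\nu=1}^{n}|w_{\nu+k}|\,a_{j,k}}{a_{m,n+k}}\ge C.
\]
First I would iterate this to get a superexponential lower bound along multiples of $n$. The key algebraic step is the telescoping identity for the weighted products: for $k\ge N$,
\[
\frac{\prod_{\nu=1}^{mn}|w_{\nu+k}|\,a_{j,k}}{a_{m,mn+k}}
=\prod_{r=0}^{m-1}\frac{\prod_{\nu=1}^{n}|w_{\nu+k+rn}|\,a_{j',\,k+rn}}{a_{j'',\,k+(r+1)n}}
\]
for appropriate intermediate indices, which lets me lower-bound by $C^m$ up to a bounded distortion factor coming from the mismatch between the indices $j$ and $m$ appearing on the numerator and denominator of adjacent factors. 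I would then fix $K>0$ and, exactly as in Proposition \ref{cond N}, choose $m_1$ large so that $C^{m_1}>K$ handles all $k\ge m_1 n$, and then use a second exponent $m_2$ together with a minimum constant $\lambda_0$ over the finitely many residual indices $1\le k\le m_1 n-1$ to push those up past $K$ as well. Since $K$ was arbitrary this gives $\inf_{k\ge 1}(\cdots)=\infty$ for the enlarged exponent, establishing the right-hand side.

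**Main obstacle.** The step that differs genuinely from Proposition \ref{cond N}, and which I expect to require care, is controlling the ratios $a_{j,k}/a_{m,n+k}$ when iterating: the clean telescoping available for a pure product $\prod|w_{\nu+k}|$ is disrupted by the Köthe weights, because the denominator index $a_{m,n+k}$ does not match the numerator index $a_{j,k}$ of the next block. The quantifier "for all $m$" on both sides of the equivalence is exactly what rescues this — by choosing $m$ suitably in the iteration one can absorb the index mismatch into a single bounded factor, and because the conclusion is also asserted for all $m$, no uniformity in $m$ is lost. I would make this explicit by tracking which column-indices of $A$ appear at each stage and invoking the monotonicity $a_{j,k}\le a_{j+1,k}$ together with the freedom in $m$ to ensure the distortion stays bounded; the remaining estimates are then routine and identical in spirit to the proof of Proposition \ref{cond N}.
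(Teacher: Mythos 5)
Your overall architecture---backward direction trivial, forward direction by telescoping blocks of the hypothesis, then the two-exponent device with a residual minimum $\lambda_0$ over finitely many small $k$, exactly as in Proposition \ref{cond N}---is the same as the paper's. But the step you yourself flag as the ``key algebraic step'' is wrong as stated, and the defect is not a \emph{bounded} distortion. If you iterate the hypothesis for the \emph{fixed} target $m$, the telescoped product over $l$ blocks equals the desired quantity times the distortion $\prod_{r=1}^{l-1} a_{j,k+rn}/a_{m,k+rn}$, which is unbounded in $l$: in the motivating example $a_{j,k}=j^k$ (the space $H(\mathbb{C})$), with $m>j$ this factor equals $(j/m)^{(l-1)k+n l(l-1)/2}$, which decays like $e^{-cl^2}$ and swamps the $C^l$ gain from the blocks. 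Moreover, your appeal to the monotonicity $a_{j,k}\le a_{j+1,k}$ cannot rescue this: the proposition does not even assume it (it is stated for an arbitrary family of positive numbers, and the paper's proof uses no monotonicity), and in any case for $m\ge j$ monotonicity only gives $a_{j,\cdot}/a_{m,\cdot}\le 1$, an upper bound---the wrong direction for a lower estimate.

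The correct repair, which your remark about ``the freedom in $m$'' gestures at but which your plan does not actually carry out, is the paper's device: instantiate the hypothesis at $m=j$, obtaining $C_j>1$, $n_j$, $N_j$ with $\inf_{k\ge N_j}\prod_{\nu=1}^{n_j}|w_{\nu+k}|\,a_{j,k}/a_{j,n_j+k}>C_j$. For this self-matching instance the telescoping is \emph{exact} (the distortion is identically $1$), giving $\prod_{\nu=1}^{ln_j}|w_{\nu+k}|\,a_{j,k}/a_{j,ln_j+k}>C_j^{\,l}$ for all $k\ge N_j$. Then, for the target $m$, one appends a \emph{single} block of length $n_m$ of the $m$-instance at the very end, converting the terminal index from $a_{j,\cdot}$ to $a_{m,\cdot}$ at the price of one factor $>C_m>1$. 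Consequently the exponent witnessing the conclusion is $(l_1+l_2)n_j+n_m$, not a multiple of a single $n$---a point your sketch misses, since you use one exponent $n$ for both roles, whereas the hypothesis supplies different block lengths $n_j$ and $n_m$ for the two instances. With this in place your $l_1$/$l_2$/$\lambda_0$ bookkeeping goes through essentially verbatim, choosing $l_1 n_j\ge\max(N_j,N_m)$ so that after the first block even the residual indices $1\le k\le l_1 n_j-1$ have advanced far enough for both the $j$-blocks and the final $m$-block to apply.
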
 
\begin{proof}
($\Leftarrow$) Evident.

($\Rightarrow$) Let $j\ge 1$.
By hypothesis, for any $m\ge 1$, there exists $C_m>1$, $n_m\ge 1$ and $N_m\ge 1$ such that
\[\inf_{k\ge N_m}\frac{\prod_{\nu=1}^{n_m}|w_{\nu + k}|a_{j,k}}{a_{m,n_m+k}}>C_m.\]
In particular, we have for any $l\ge 1$,
\begin{equation*}
\inf_{k\ge N_{j}}\frac{\prod_{\nu=1}^{ln_{j}}|w_{\nu + k}|a_{j,k}}{a_{j,ln_{j}+k}}> C_j^l.
\end{equation*}
Let $m\ge 1$ and $K>0$. We consider an integer $l_1\ge 1$ such that $l_1n_{j}\ge\max(N_{j},N_m)$ and $C_j^{l_1}>K$, and another integer $l_2\ge 1$ such that
$\lambda_0 C_j^{l_2}> K$ where \[\lambda_0= \min_{0\le k\le l_1n_{j}-1}\frac{\prod_{\nu=1}^{l_1n_{j}}|w_{\nu + k}|a_{j,k}}{a_{j,l_1n_{j}+k}}.\]
Therefore, since $l_1n_{j}\ge\max(N_{j},N_m)$, we have for any $k\ge l_1n_{j}$,
\begin{align*}
\frac{\prod_{\nu=1}^{(l_1+l_2)n_{j}+n_m}|w_{\nu + k}|a_{j,k}}{a_{m,(l_1+l_2)n_{j}+n_m+k}}&=
\frac{\prod_{\nu=1}^{(l_1+l_2)n_{j}}|w_{\nu + k}|a_{j,k} \prod_{\nu=1}^{n_m}|w_{\nu+(l_1+l_2)n_{j}+k}|} {a_{m,(l_1+l_2)n_{j}+n_m+k}}\\
&>C_j^{l_1+l_2}\frac{\prod_{\nu=1}^{n_m}|w_{\nu+(l_1+l_2)n_{j}+k}|a_{j,(l_1+l_2)n_{j}+k}}{a_{m,(l_1+l_2)n_{j}+n_m+k}}\\
&>C_j^{l_1+l_2}C_m>K
\end{align*}
and for any $1\le k\le l_1n_{j}-1$,
\begin{align*}
\frac{\prod_{\nu=1}^{(l_1+l_2)n_{j}+n_m}|w_{\nu + k}|a_{j,k}}{a_{m,(l_1+l_2)n_{j}+n_m+k}}
&= \frac{\prod_{\nu=1}^{l_1n_{j}}|w_{\nu + k}|a_{j,k}}{a_{j,l_1n_{j}+k}}\cdot \frac{\prod_{\nu=1}^{l_2n_{j}}|w_{\nu+l_1n_{j}+k}|a_{j,l_1n_{j}+k}}{a_{j,(l_1+l_2)n_{j}+k}}\\
&\quad\quad\cdot\frac{\prod_{\nu=1}^{n_m}|w_{\nu+(l_1+l_2)n_{j}+k}|a_{j,(l_1+l_2)n_{j}+k}}{a_{m,(l_1+l_2)n_{j}+n_m+k}}\\
&>\lambda_0 C_j^{l_2}C_m>K.
\end{align*}
\end{proof}
We obtain the following characterization:
\begin{theorem}
Let $B_w:X\rightarrow X$ be a hypercyclic weighted shift, where $X=\lambda^p(A)$ with $1\le p<\infty$ or $X=c_0(A)$.
Suppose that there exists $J\ge 1$ such that for any $m\ge 1$, for any $j\ge 1$, there exists $m_j\ge 1$ such that
\begin{equation}
\sup_{n\ge 0}\limsup_k \frac{a_{j,k}a_{m,n+k}}{a_{J,k}a_{m_{j},n+k}}<\infty.
\label{cond B}
\end{equation}
Then $B_w$ possesses a hypercyclic subspace if and only if there exists $m\ge 1$ such that
\begin{equation*}
\sup_{n\ge1}\inf_{k\ge 1}\frac{\prod_{\nu=1}^{n}|w_{\nu + k}|a_{J,k}}{a_{m,n+k}}\le 1
\end{equation*}
or equivalently if and only if
\begin{equation*}
\sup_{n\ge1}\inf_{k\ge 1}\frac{\prod_{\nu=1}^{n}|w_{\nu + k}|a_{J,k}}{a_{m,n+k}}<\infty.
\end{equation*}
\end{theorem}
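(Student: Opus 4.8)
The plan is to reduce both equivalences to Theorems \ref{Kothe 1} and \ref{Kothe 2} by means of the identity $p_j(e_k)=a_{j,k}$ and the explicit form of $B_w$ on the canonical basis. First I would record that, since $B_we_k=w_ke_{k-1}$ with $e_0=0$, we have $B_w^ne_k=\big(\prod_{i=0}^{n-1}w_{k-i}\big)e_{k-n}$ for $k>n$ and $B_w^ne_k=0$ for $k\le n$, whence $p_J(B_w^ne_k)=\big(\prod_{i=0}^{n-1}|w_{k-i}|\big)a_{J,k-n}$. The substitution $k\mapsto n+k$ then yields
\[\sup_{n\ge1}\sup_{N\ge1}\inf_{k\ge N}\frac{p_{J}(B_w^ne_k)}{p_{j}(e_{k})}=\sup_{n\ge1}\sup_{N\ge1}\inf_{k\ge N}\frac{\prod_{\nu=1}^{n}|w_{\nu+k}|\,a_{J,k}}{a_{j,n+k}},\]
where the supremum over $N$ effectively runs over $N>n$ because $B_w^ne_k=0$ for $k\le n$. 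The same substitution shows that the standing hypothesis \eqref{cond B} coincides with the hypothesis of Theorem \ref{Kothe 2}, so that theorem is available.

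Denote by (I) and (II) the conditions $\exists m\ge1:\ \sup_{n\ge1}\inf_{k\ge1}\frac{\prod_{\nu=1}^{n}|w_{\nu+k}|a_{J,k}}{a_{m,n+k}}\le1$ and the same with $\le1$ replaced by $<\infty$, and by (III) the existence of a hypercyclic subspace. I would establish the cycle (I)$\Rightarrow$(II)$\Rightarrow$(III)$\Rightarrow$(I). The implication (I)$\Rightarrow$(II) is trivial. For (II)$\Rightarrow$(III) I would apply Proposition \ref{4.4} with the numerator index fixed at $j=J$; negating the equivalence stated there turns (II) into the existence of an $m$ with $\sup_{n}\sup_{N}\inf_{k\ge N}\frac{\prod_{\nu=1}^n|w_{\nu+k}|a_{J,k}}{a_{m,n+k}}\le1$, and in particular $<\infty$. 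By the displayed identity this is exactly the hypothesis $\sup_n\sup_N\inf_{k\ge N}\frac{p_J(B_w^ne_k)}{p_m(e_k)}<\infty$ of Theorem \ref{Kothe 2}, which then furnishes the hypercyclic subspace (recall $B_w$ is hypercyclic by assumption).

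For (III)$\Rightarrow$(I) I would argue by contraposition through Theorem \ref{Kothe 1}. The negation of (I) asserts that $\sup_{n}\inf_{k\ge1}\frac{\prod_{\nu=1}^n|w_{\nu+k}|a_{J,k}}{a_{m,n+k}}>1$ for every $m$; since $\inf_{k\ge1}\le\inf_{k\ge N}$ for all $N$, we have $\sup_n\inf_{k\ge1}(\cdots)\le\sup_n\sup_N\inf_{k\ge N}(\cdots)$, so the displayed identity gives $\sup_n\sup_N\inf_{k\ge N}\frac{p_J(B_w^ne_k)}{p_m(e_k)}>1$ for every $m$. This is precisely the hypothesis of Theorem \ref{Kothe 1}, whence $B_w$ has no hypercyclic subspace. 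I expect the only delicate point to be the step (II)$\Rightarrow$(III): Theorem \ref{Kothe 2} demands finiteness of the larger quantity $\sup_n\sup_N\inf_{k\ge N}$, whereas (II) only controls $\sup_n\inf_{k\ge1}$, and bridging this gap is exactly the role of Proposition \ref{4.4}, which converts the existence of an $m$ making $\sup_n\inf_{k\ge1}(\cdots)$ finite into the existence of an $m$ making $\sup_n\sup_N\inf_{k\ge N}(\cdots)\le1$. Everything else is routine bookkeeping with the reindexing $k\mapsto n+k$.
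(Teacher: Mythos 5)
Your proof is correct and is essentially the paper's own (implicit) argument: the theorem is stated there as a direct consequence of Theorems \ref{Kothe 1} and \ref{Kothe 2} together with Proposition \ref{4.4}, via precisely your reindexing $p_j(e_k)=a_{j,k}$ and $p_J(B_w^ne_{n+k})=\big(\prod_{\nu=1}^n|w_{\nu+k}|\big)a_{J,k}$, under which condition \eqref{cond B} becomes the hypothesis of Theorem \ref{Kothe 2}. In particular, your application of the negated Proposition \ref{4.4} with $j=J$ to upgrade $\sup_{n}\inf_{k\ge1}(\cdots)<\infty$ to $\sup_{n}\sup_{N}\inf_{k\ge N}(\cdots)\le 1$ (the input Theorem \ref{Kothe 2} needs, with hypercyclicity of $B_w$ supplying its remaining hypothesis) is exactly the intended bridge, while the trivial bound $\sup_{n}\inf_{k\ge1}(\cdots)\le\sup_{n}\sup_{N}\inf_{k\ge N}(\cdots)$ correctly yields the Theorem \ref{Kothe 1} direction by contraposition.
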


In particular, this theorem can be applied in the case of spaces $l^p(v)$ and $c_0(v)$ if we consider $a_{j,k}=(v_k)^{\frac{1}{p}}$ and it extends thereby Corollary \ref{cor carac lp} to the case of real spaces.

\begin{cor}\label{cor carac lp 2}
A hypercyclic weighted shift on the real or complex space $l^p(v)$ with $1\le p<\infty$ possesses a hypercyclic subspace if and only if
\[\sup_{n\ge 1}\inf_{k\ge 1} \prod_{\nu=1}^n |w_{\nu+k}|\Big(\frac{v_{k}}{v_{k+n}}\Big)^{\frac{1}{p}}\le 1\]
or equivalently if and only if
\[\sup_{n\ge 1}\inf_{k\ge 1} \prod_{\nu=1}^n |w_{\nu+k}|\Big(\frac{v_{k}}{v_{k+n}}\Big)^{\frac{1}{p}}< \infty,\]
and a hypercyclic weighted shift on the real or complex space $c_0(v)$ possesses a hypercyclic subspace if and only if
\[\sup_{n\ge 1}\inf_{k\ge 1} \prod_{\nu=1}^n |w_{\nu+k}|\Big(\frac{v_{k}}{v_{k+n}}\Big)\le 1\]
or equivalently if and only if
\[\sup_{n\ge 1}\inf_{k\ge 1} \prod_{\nu=1}^n |w_{\nu+k}|\Big(\frac{v_{k}}{v_{k+n}}\Big)< \infty.\]
\end{cor}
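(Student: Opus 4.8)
The plan is to recognise the spaces $l^p(v)$ and $c_0(v)$ as particular K�the sequence spaces and to invoke the theorem immediately preceding the statement. First I would treat $l^p(v)$. Define the matrix $A=(a_{j,k})_{j,k\ge 1}$ by $a_{j,k}=(v_k)^{1/p}$, independently of $j$. Since $v_k>0$ we have $a_{j,k}>0$, and the monotonicity $a_{j,k}\le a_{j+1,k}$ holds with equality, so $A$ is an admissible K�the matrix. With this choice every seminorm collapses to $p_j((x_k)_k)=\big(\sum_{k}|x_k|^p v_k\big)^{1/p}$, which is precisely the norm of $l^p(v)$; hence $\lambda^p(A)=l^p(v)$ with equal norms. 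In particular the space carries a single norm, so it is a Fr�chet space with a continuous norm, and the hypothesis that $B_w$ be a hypercyclic weighted shift is inherited directly, since $B_w$ is literally the same operator. The space $c_0(v)$ is handled identically by taking $a_{j,k}=v_k$, which yields $p_j((x_k)_k)=\max_k|x_k|v_k$ and $c_0(A)=c_0(v)$.

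Next I would verify the standing hypothesis \eqref{cond B} of the preceding theorem. This is where the constancy of $A$ in the index $j$ does all the work: for any $J$, any $m$, any $j$, and the choice $m_j=1$, the entries cancel and
\[\frac{a_{j,k}\,a_{m,n+k}}{a_{J,k}\,a_{m_j,n+k}}=1\qquad\text{for all }n,k,\]
so that $\sup_{n\ge 0}\limsup_k(\cdots)=1<\infty$. Thus \eqref{cond B} holds trivially, and we may take $J=1$.

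It then remains only to read off the conclusion of the theorem with these data. For $l^p(v)$ we have $a_{J,k}=(v_k)^{1/p}$ and $a_{m,n+k}=(v_{n+k})^{1/p}$, so the quotient appearing in the characterization is
\[\frac{\prod_{\nu=1}^{n}|w_{\nu+k}|\,a_{J,k}}{a_{m,n+k}}=\prod_{\nu=1}^{n}|w_{\nu+k}|\Big(\frac{v_k}{v_{n+k}}\Big)^{1/p};\]
since the matrix does not depend on its first index, the choice of $m$ is immaterial, and the theorem yields that $B_w$ has a hypercyclic subspace if and only if $\sup_{n\ge 1}\inf_{k\ge 1}\prod_{\nu=1}^{n}|w_{\nu+k}|(v_k/v_{n+k})^{1/p}\le 1$, equivalently if and only if this supremum is finite. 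The $c_0(v)$ assertion follows in exactly the same way, with the exponent $1$ in place of $1/p$, using $a_{j,k}=v_k$.

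The argument carries no genuine obstacle: all of the substance is contained in the preceding theorem, and the only point requiring care is the observation that, because the K�the matrix is chosen constant in $j$, the two spaces are honest Banach spaces, the whole family of seminorms reduces to a single norm, and the technical condition \eqref{cond B} is satisfied vacuously. This is precisely what lets the identical statement be read off for real scalars, thereby extending Corollary~\ref{cor carac lp} beyond the complex case.
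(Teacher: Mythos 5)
Your proposal is correct and takes essentially the same route as the paper, which obtains this corollary precisely by applying the preceding theorem to the $j$-constant K\"othe matrix $a_{j,k}=(v_k)^{1/p}$ (resp.\ $a_{j,k}=v_k$ for $c_0(v)$), so that $\lambda^p(A)=l^p(v)$, $c_0(A)=c_0(v)$, and condition \eqref{cond B} holds trivially since the relevant ratio is identically $1$. Your additional observations---that the existential quantifier over $m$ becomes immaterial and that the conclusion then reads off verbatim---merely make explicit what the paper states in one line.
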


An example of Fréchet space satisfying condition \eqref{cond B}
is the space of entire functions. This space, denoted by $H(\mathbb{C})$, can be seen as the Köthe sequence space $\lambda^1(A)$ with $a_{j,n}=j^n$, and condition \eqref{cond B} is then satisfied for $J=1$ and $m_j=2jm$ because for any $m, j\ge 1$, we have
\[\limsup_k \frac{a_{j,k}a_{m,n+k}}{a_{J,k}a_{m_{j},n+k}}=\limsup_k \frac{j^km^{n+k}}{J^km_{j}^{n+k}}=\limsup_k \frac{j^km^{n+k}}{(2j)^{n+k}m^{n+k}}=0.\]

\begin{cor}
Let $B_w:H(\mathbb{C})\rightarrow H(\mathbb{C})$ be a hypercyclic weighted shift defined by $B_wz^n=w_nz^{n-1}$. The operator $B_w$ possesses a hypercyclic subspace if and only if there exists $m\ge 1$ such that
\begin{equation}
\sup_{n\ge1}\inf_{k\ge 0}\frac{\prod_{\nu=1}^{n}|w_{\nu + k}|}{m^{n+k}}\le 1.
\label{cond hc}
\end{equation}
\end{cor}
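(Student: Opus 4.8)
The plan is to obtain this corollary as a direct specialization of the characterization theorem stated just above (for weighted shifts on $X=\lambda^p(A)$ or $X=c_0(A)$) to the case $X=H(\mathbb{C})=\lambda^1(A)$ with the K\"othe matrix $a_{j,n}=j^n$. The substantive content --- both implications between the product condition and the existence of a hypercyclic subspace --- is already packaged inside that theorem, so the only work left is to check that its hypotheses apply here and to read off what its conclusion says in the coordinates natural to $H(\mathbb{C})$.

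First I would record the realization $H(\mathbb{C})=\lambda^1(A)$ with $a_{j,n}=j^n$: a power series $f=\sum_n x_n z^n$ represents an entire function precisely when $\sum_n|x_n|\,j^n<\infty$ for every $j\ge 1$ (a finite radius of convergence $R$ is detected exactly by choosing an integer $j>R$, which makes the sum diverge), and the norms $p_j(f)=\sum_n|x_n|j^n$ generate the usual topology; note that here the monomials $z^n$ are indexed by $n\ge 0$. Since $B_w$ is given as a weighted shift $H(\mathbb{C})\to H(\mathbb{C})$, under the identification $e_n=z^n$ it is exactly a weighted shift $B_we_n=w_ne_{n-1}$ on $\lambda^1(A)$ in the sense of that theorem. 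I would then invoke the verification of the standing hypothesis \eqref{cond B} carried out in the paragraph preceding the corollary: with $J=1$ and $m_j=2jm$ one has $\limsup_k \frac{a_{j,k}a_{m,n+k}}{a_{J,k}a_{m_j,n+k}}=0$ for all $m,j\ge 1$, so the hypotheses of the characterization theorem are in force and its conclusion applies to $B_w$.

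Next I would carry out the substitution. Taking $p=1$ and $J=1$ gives $a_{J,k}=1^k=1$ and $a_{m,n+k}=m^{n+k}$, whence $\frac{p_J(B_w^n e_{n+k})}{p_m(e_{n+k})}=\frac{\prod_{\nu=1}^n|w_{\nu+k}|}{m^{n+k}}$, so the theorem's criterion ``$\exists m\ge 1:\ \sup_{n}\inf_{k}\frac{\prod_{\nu=1}^n|w_{\nu+k}|a_{J,k}}{a_{m,n+k}}\le 1$'' becomes exactly \eqref{cond hc}, and the equivalent ``$<\infty$'' form transfers identically. The only point requiring care is the range of the inner infimum: the abstract K\"othe framework indexes the basis from $1$, whereas the monomials of $H(\mathbb{C})$ begin at $z^0$, which is why \eqref{cond hc} reads $\inf_{k\ge 0}$. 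I expect this index bookkeeping to be the sole (and minor) obstacle, and it is harmless: by the $\sup_N\inf_{k\ge N}$ reduction underlying Proposition \ref{4.4} together with the equivalence of the ``$\le 1$'' and ``$<\infty$'' forms, the criterion depends only on the behaviour of the products for large $k$, so adjoining or deleting the single index $k=0$ cannot affect whether such an $m$ exists. This completes the reduction to the already-established theorem.
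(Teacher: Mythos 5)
Your proposal is correct and is essentially the paper's own argument: the paper derives this corollary directly from the preceding characterization theorem by realizing $H(\mathbb{C})=\lambda^1(A)$ with $a_{j,n}=j^n$ and verifying hypothesis \eqref{cond B} with $J=1$ and $m_j=2jm$, exactly as you do. Your extra care about the index range ($\inf_{k\ge 0}$ versus the abstract $\inf_{k\ge 1}$) is sound --- it can be disposed of either by your tail argument via the $\sup_N\inf_{k\ge N}$ form (note that the $\lambda_0$ in the proof of Proposition \ref{4.4} already runs over $0\le k\le l_1n_j-1$) or, even more simply, by re-indexing the monomial basis as $e_k=z^{k-1}$, $k\ge 1$, under which the theorem's condition becomes \eqref{cond hc} verbatim.
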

\begin{remark}In 2010, Shkarin \cite{Shkarin} has shown that the differentiation operator on the space of entire functions possesses a hypercyclic subspace.
In fact, the differentiation operator can be seen as the weighted shift $B_w$ with $w_n=n$ and this operator satisfies condition \eqref{cond hc}:
\begin{equation}
\sup_{n\ge1}\inf_{k\ge 0}\frac{\prod_{\nu=1}^{n}|w_{\nu + k}|}{2^{n+k}}\le \sup_{n\ge1}\inf_{k\ge 0}\frac{(k+n)^n}{2^{n+k}}=0.\label{diff}
\end{equation}
We have thus improved the result of Shkarin by giving a characterization of weighted shifts on $H(\mathbb{C})$ with hypercyclic subspaces.
\end{remark}

Another important example of a Fréchet space satisfying condition \eqref{cond B}
is the space $s$ of rapidly decreasing sequences, which is the space $\lambda^1(A)$ with $a_{j,k}=k^j$. This space is isomorphic to $ C^{\infty}([0,1])$ (see \cite[Example 29.4]{Meise}).

\begin{cor}
Let $B_w:s\rightarrow s$ be a hypercyclic weighted shift. The operator $B_w$ possesses a hypercyclic subspace if and only if there exists $m\ge 1$ such that
\begin{equation*}
\sup_{n\ge1}\inf_{k\ge 0}\frac{\prod_{\nu=1}^{n}|w_{\nu + k}|}{(n+k)^m}\le 1.
\end{equation*}
\end{cor}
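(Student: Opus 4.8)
The plan is to realise $s$ as the K\"othe sequence space $\lambda^{1}(A)$ with $a_{j,k}=k^{j}$ and to invoke the characterization theorem above (the one whose hypothesis is \eqref{cond B}); the only genuine work is to check \eqref{cond B} and then to put the resulting inequality into the stated form. Since $s$ is a Fr\'echet space with a continuous norm and the defining norms $p_{j}$ are increasing, the theorem applies as soon as \eqref{cond B} holds.

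First I would verify \eqref{cond B} with $J=1$. For fixed $m,j\ge 1$,
\[
\frac{a_{j,k}\,a_{m,n+k}}{a_{1,k}\,a_{m_{j},n+k}}=\frac{k^{j}(n+k)^{m}}{k\,(n+k)^{m_{j}}}=\frac{k^{\,j-1}}{(n+k)^{\,m_{j}-m}},
\]
so the choice $m_{j}=j+m$ turns the denominator into $(n+k)^{j}$, of degree $j$, against a numerator of degree $j-1$. Hence for each fixed $n$ the expression tends to $0$ as $k\to\infty$, and $\sup_{n\ge 0}\limsup_{k}$ is finite (in fact $0$); thus \eqref{cond B} holds with $J=1$.

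Applying the theorem with $J=1$, $a_{1,k}=k$ and $a_{m,n+k}=(n+k)^{m}$, the shift $B_{w}$ has a hypercyclic subspace if and only if there is an $m\ge 1$ with
\[
A_{m}:=\sup_{n\ge 1}\inf_{k\ge 1}\frac{\prod_{\nu=1}^{n}|w_{\nu+k}|\,k}{(n+k)^{m}}\le 1,
\]
equivalently $A_{m}<\infty$. It then remains only to erase the factor $k$, i.e. to pass to
\[
B_{m}:=\sup_{n\ge 1}\inf_{k\ge 1}\frac{\prod_{\nu=1}^{n}|w_{\nu+k}|}{(n+k)^{m}},
\]
which is the quantity appearing in the statement.

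This reconciliation is the sole real step, and it follows from two one-line bounds. Since $k\ge 1$ we have $B_{m}\le A_{m}$, and since $k\le n+k$ we have $A_{m+1}\le B_{m}$; consequently ``$\exists m:A_{m}<\infty$'' and ``$\exists m:B_{m}<\infty$'' are equivalent, which together with the theorem gives the ``$<\infty$'' form of the statement. For the ``$\le 1$'' form, if $B_{m}<\infty$ for some $m$ then $B_{w}$ has a hypercyclic subspace, so the theorem furnishes an $m_{0}$ with $A_{m_{0}}\le 1$, whence $B_{m_{0}}\le A_{m_{0}}\le 1$; the reverse implication is trivial. Thus the main obstacle is merely to carry the harmless weight $a_{1,k}=k$ through the inf--sup, which the inequalities $B_{m}\le A_{m}$ and $A_{m+1}\le B_{m}$ settle.
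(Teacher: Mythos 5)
Your overall route is exactly the paper's (implicit) one: the paper proves this corollary only by remarking that $s=\lambda^1(A)$ with $a_{j,k}=k^j$ and appealing to the characterization theorem under \eqref{cond B}. Your verification of \eqref{cond B} with $J=1$ and $m_j=j+m$ is correct, and your reconciliation of the weight $a_{1,k}=k$ via the two bounds $B_m\le A_m$ (from $k\ge 1$) and $A_{m+1}\le B_m$ (from $k\le n+k$), together with the ``$\le 1$ versus $<\infty$'' bookkeeping through the theorem, is sound. So the factor-$k$ step, which you rightly identify as the real content, is handled correctly.

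There is, however, one genuine gap: the corollary's infimum runs over $k\ge 0$, while your $B_m$ is defined with $\inf_{k\ge 1}$, and your claim that $B_m$ ``is the quantity appearing in the statement'' is false as written. Since $\inf_{k\ge 0}\le\inf_{k\ge 1}$, the stated condition is \emph{weaker} than the one you prove equivalent to having a hypercyclic subspace; hence the ``only if'' direction survives for free, but the ``if'' direction does not follow from your inequalities: a priori the extra term at $k=0$, namely $\prod_{\nu=1}^{n}|w_\nu|/n^m$ (which corresponds to no estimate on basis vectors, as $B_w^ne_n=0$ in $s$), could force $\sup_n\inf_{k\ge 0}\le 1$ even though $\sup_n\inf_{k\ge 1}=\infty$ for every $m$. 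To close the gap one must argue that when $B_w$ has no hypercyclic subspace the blow-up holds down to $k=0$ as well; this follows either by rerunning the argument of Proposition \ref{4.4} with the $j$-row identically $1$ (note its $\lambda_0$ is already a minimum starting at $k=0$, and the three-fold splitting of the product works verbatim for $k=0$), or by observing that continuity of $B_w$ on $s$ forces a polynomial bound $|w_{n+1}|\le C(n+1)^p$, so that
\[
\frac{\prod_{\nu=1}^{n}|w_\nu|}{n^m}=\frac{|w_1|}{|w_{n+1}|}\cdot\frac{\prod_{\nu=1}^{n}|w_{\nu+1}|}{n^m}\ge\frac{|w_1|}{C(n+1)^p}\cdot\frac{\prod_{\nu=1}^{n}|w_{\nu+1}|}{(n+1)^m},
\]
and the $k=0$ term is absorbed by enlarging $m$. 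Either fix is short, but some such sentence is needed and your proposal omits it entirely.
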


\begin{remark}
A sufficient condition on $A$ to satisfy \eqref{cond B} is that for any $j,k\ge1$, we have $a_{j,k}\le a_{j,k+1}$ and for any $j\ge 1$, there exists $m_j\ge 1$ such that \[\sup_k \frac{a_{j,k}^2}{a_{m_j,k}}<\infty.\]
On the other hand, some matrices $A$ do not satisfy \eqref{cond B}. For example, the matrix $A$ with $a_{j,k}=k^{1-\frac{1}{j}}$ does not satisfy \eqref{cond B}.
\end{remark}

As in the case of the complex spaces $l^p(\mathbb{Z})$ and $c_0(\mathbb{Z})$, we can prove the existence of a hypercyclic subspace for every hypercyclic bilateral weighted shifts on certain real or complex Köthe sequence spaces. Let $A=(a_{j,k})_{j\ge 1,k\in \mathbb{Z}}$ be a matrix such that for any $j\ge 1, k\in \mathbb{Z}$, we have $a_{j,k}>0$ and $a_{j,k}\le a_{j+1,k}$. We define the spaces $\lambda^p(A,\mathbb{Z})$ with $1\le p<\infty$ and $c_0(A,\mathbb{Z})$ by
\begin{align*}
\lambda^p(A,\mathbb{Z})&:=\Big\{(x_k)_k\in \mathbb{K}^{\mathbb{Z}} : p_j((x_k)_k)=\Big(\sum_{k\in \mathbb{Z}}|x_ka_{j,k}|^p\Big)^{\frac 1 p}<\infty, \ j\ge 1\Big\},\\
c_0(A,\mathbb{Z})&:=\{(x_k)_k\in \mathbb{K}^{\mathbb{Z}} : \lim_{k\rightarrow \pm \infty}|x_k|a_{j,k}=0, \ j\ge 1\} \text{ with } p_j((x_k)_k)= \max_{k\in \mathbb{Z}}|x_k|a_{j,k}.
\end{align*}
Suppose that $X=\lambda^p(A,\mathbb{Z})$ or $c_0(A,\mathbb{Z})$ and that there exists $J\ge 1$ such that for any $m\ge 1$, for any $j\ge 1$, there exists $m_j\ge 1$ such that
\begin{equation*}
\sup_{n\ge 0}\limsup_{k\rightarrow -\infty} \frac{a_{j,k}a_{m,n+k}}{a_{J,k},a_{m_{j},n+k}}<\infty.
\end{equation*} 
Proceeding in the same way as for unilateral shifts, one can show that for any hypercyclic bilateral weighted shift $B_w:X\rightarrow X$, if there exists $m\ge 1$ such that we have
\[\sup_{n\ge1}\sup_{N\ge 0}\inf_{k\le -N}\frac{\prod_{\nu=1}^{n}|w_{\nu + k}|a_{J,k}}{a_{m,n+k}}\le 1\] then $B_w$ possesses a hypercyclic subspace. We show that if $B_w$ is hypercyclic then $B_w$ satisfies this condition and thus that every hypercyclic bilateral weighted shift possesses a hypercyclic subspace.

\begin{theorem}
Let $X=\lambda^p(A,\mathbb{Z})$ with $1\le p<\infty$ or $X=c_0(A,\mathbb{Z})$.
Suppose that there exists $J\ge 1$ such that for any $m\ge 1$, for any $j\ge 1$, there exists $m_j\ge 1$ such that
\begin{equation*}
\sup_{n\ge 0}\limsup_{k\rightarrow -\infty} \frac{a_{j,k}a_{m,n+k}}{a_{J,k}a_{m_{j},n+k}}<\infty.
\end{equation*}
Then every hypercyclic bilateral weighted shift possesses a hypercyclic subspace.
\end{theorem}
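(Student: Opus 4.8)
The plan is to exploit the implication already granted in the paragraph preceding the statement: it reduces the theorem to checking that the hypercyclicity of $B_w$ forces $\sup_{n}\sup_{N}\inf_{k\le -N}\frac{\prod_{\nu=1}^{n}|w_{\nu+k}|a_{J,k}}{a_{m,n+k}}\le 1$ for at least one index $m\ge 1$, and I will establish it for $m=J$. Since a hypercyclic weighted shift is weakly mixing and hence satisfies the Hypercyclicity Criterion (see \cite{Karl2}, \cite{Salas}), its first condition supplies an increasing sequence $(n_l)$ with $B_w^{n_l}e_j\to 0$ in $X$ for every $j\in\mathbb{Z}$, so in particular $p_J(B_w^{n_l}e_j)\to 0$. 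Recalling that $p_J(B_w^n e_j)=\big(\prod_{\nu=0}^{n-1}|w_{j-\nu}|\big)a_{J,j-n}$ and that, for $j=n+k$, the quantity $\frac{\prod_{\nu=1}^{n}|w_{\nu+k}|a_{J,k}}{a_{J,n+k}}$ is exactly $\frac{p_J(B_w^n e_j)}{p_J(e_j)}$, the target inequality for $m=J$ reads $\sup_n\liminf_{j\to-\infty}\frac{p_J(B_w^n e_j)}{p_J(e_j)}\le 1$. I therefore argue by contradiction and assume this fails.

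Then there are $n_0\ge 1$, $C>1$ and a threshold $j^*$ such that $p_J(B_w^{n_0}e_j)\ge C\,p_J(e_j)=C\,a_{J,j}$ for all $j\le j^*$. The key step is to iterate this single block inequality. As $B_w^{n_0}e_j$ is a scalar multiple of $e_{j-n_0}$, an easy induction on $L$ gives
$p_J(B_w^{Ln_0}e_j)\ge C^{L}a_{J,j}$ for every $j\le j^*$ and every $L\ge 0$: at each step one writes $B_w^{(L+1)n_0}e_j=B_w^{Ln_0}\big(B_w^{n_0}e_j\big)$, pulls out the scalar, and applies the inductive bound at $j-n_0\le j^*$ together with the block inequality at $j$. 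In the product notation this is precisely the telescoping of the ratios $a_{J,\,\cdot}$ along the chain $j,\,j-n_0,\dots,\,j-(L-1)n_0$, which is clean exactly because the chosen index is $m=J$.

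Finally I would combine this geometric growth with the decay coming from hypercyclicity. Fixing $j_0\le j^*$ and writing $n_l=L_l n_0+p_l$ with $0\le p_l<n_0$ and $L_l\to\infty$, I factor $B_w^{n_l}e_{j_0}=B_w^{L_l n_0}\big(B_w^{p_l}e_{j_0}\big)$; since $B_w^{p_l}e_{j_0}$ is a fixed scalar multiple of $e_{j_0-p_l}$ with $j_0-p_l\le j^*$, the iterated bound yields $p_J(B_w^{n_l}e_{j_0})\ge\big(\prod_{\nu=0}^{p_l-1}|w_{j_0-\nu}|\big)C^{L_l}a_{J,j_0-p_l}$. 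As $p_l$ ranges over the finite set $\{0,\dots,n_0-1\}$, both $\prod_{\nu=0}^{p_l-1}|w_{j_0-\nu}|$ and $a_{J,j_0-p_l}$ stay above fixed positive constants, so $p_J(B_w^{n_l}e_{j_0})\ge\delta\,C^{L_l}\to\infty$, contradicting $p_J(B_w^{n_l}e_{j_0})\to 0$. Hence the inequality holds for $m=J$ and the granted implication produces a hypercyclic subspace. I expect the main subtlety to be precisely the treatment of the ``remainder'' $p_l$: forcing the leftover block onto the \emph{fixed} index $j_0$ (rather than the far, arbitrarily negative end) is what keeps its weight bounded below, and it is also the point where one must avoid invoking $\sup_k|w_k|<\infty$, which genuinely fails here (for instance for differentiation on $H(\mathbb{C})$, where $w_n=n$).
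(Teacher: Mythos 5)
Your proof is correct and is essentially the paper's own argument: the paper likewise establishes the reduced inequality for $m=J$ by contradiction, telescopes the block inequality $\prod_{\nu=1}^{n}|w_{k+\nu}|\,a_{J,k}>C\,a_{J,k+n}$ along an arithmetic chain of indices (your induction via the factorization $B_w^{n_l}=B_w^{L_l n_0}B_w^{p_l}$ is just a cleaner bookkeeping of the same telescoping), absorbs the remainder block into a constant ($\lambda_0$ in the paper) built from the finitely many weights at the fixed end --- exactly your trick for avoiding $\sup_k|w_k|<\infty$ --- and contradicts $p_J(B_w^{n_k}e_{-N})\rightarrow 0$. The one small repair: for the decay $B_w^{n_l}e_j\rightarrow 0$ you should cite, as the paper does, the characterization of hypercyclic bilateral weighted shifts in \cite{Karl2}, which gives this on the basis vectors directly, rather than the first condition of the Hypercyclicity Criterion, whose dense set $X_0$ need not contain the $e_j$.
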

\begin{proof}
By the characterization of hypercyclic bilateral weighted shifts (see \cite{Karl2}), we know that there exists  an increasing sequence of integers $(n_k)_{k\ge 1}$ such that for any $j\in \mathbb{Z}$,
\begin{equation}
\lim_{k\rightarrow \infty} \prod_{\nu=0}^{n_k-1}|w_{j-\nu}|a_{J,j-n_k}=0.
\label{max 02}
\end{equation}
Suppose that $B_w$ does not possess any hypercyclic subspace. We then have for any $m\ge 1$, 
\[\sup_{n\ge1}\sup_{N\ge 0}\inf_{k\le -N}\frac{\prod_{\nu=1}^{n}|w_{\nu + k}|a_{J,k}}{a_{m,n+k}}>1.\]
In particular, there thus exist $C>1$, $n\ge 1$ and $N\ge 0$ such that for any $k\le -N$, we have
\begin{equation}
\frac{\prod_{\nu=1}^{n}|w_{k+\nu}|a_{J,k}}{a_{J,k+n}}>C.
\label{min2}
\end{equation}
We fix $\displaystyle\lambda_0=\max_{0\le p \le n-1} \frac{\prod_{\nu=1}^{n-p}|w_{-N+\nu}|}{a_{J,-N+n-p}}$. If $n_k=m_kn+p$ with $0\le p\le n-1$ and $m_k\ge 1$ then we have by using \eqref{min2}
\begin{align*}
\prod_{\nu=0}^{n_k-1}|w_{-N-\nu}|a_{J,-N-n_k}&=\frac{\prod_{l=0}^{m_k}\prod_{\nu=1}^{n}|w_{-N-n_k+ln+\nu}|a_{J,-N-n_k}}{\prod_{\nu=1}^{n-p}|w_{-N+\nu}|}\\
&>\frac{C^{m_k+1}a_{J,-N+n-p}}{\prod_{\nu=1}^{n-p}|w_{-N+\nu}|}\\
&>\frac{C^{m_k+1}}{\lambda_0}.\end{align*}
This is a contradiction with the equation \eqref{max 02}. We conclude that the operator $B_w$ possesses a hypercyclic subspace.
\end{proof}

\begin{cor}
Every hypercyclic bilateral weighted shift on the real or complex space $l^p(v,\mathbb{Z})$ or on the real or complex space $c_0(v,\mathbb{Z})$ possesses a hypercyclic subspace.
\end{cor}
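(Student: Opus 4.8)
The plan is to realize each of these weighted Banach spaces as one of the K\"othe sequence spaces $\lambda^p(A,\mathbb{Z})$ or $c_0(A,\mathbb{Z})$ and then apply the preceding theorem verbatim. The advantage of this route over the isomorphism argument used earlier for the complex case is that the K\"othe-space formulation is insensitive to the underlying scalar field, so the same computation simultaneously delivers the real and the complex spaces, exactly as in the remark extending Corollary \ref{cor carac lp} to real spaces.

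First I would identify $l^p(v,\mathbb{Z})$ with $\lambda^p(A,\mathbb{Z})$ for the matrix $a_{j,k}=(v_k)^{1/p}$, and $c_0(v,\mathbb{Z})$ with $c_0(A,\mathbb{Z})$ for the matrix $a_{j,k}=v_k$, where in both cases the entry $a_{j,k}$ does not depend on the first index $j$. Since the weight is strictly positive we have $a_{j,k}>0$, and since $a_{j,k}=a_{j+1,k}$ the monotonicity requirement $a_{j,k}\le a_{j+1,k}$ on the matrix holds with equality; moreover the resulting (single) seminorm $p_j$ reproduces exactly the norm of $l^p(v,\mathbb{Z})$, respectively of $c_0(v,\mathbb{Z})$, so that the constructed K\"othe space \emph{is} the given Banach space, which is in particular a Fr\'echet space to which the preceding theorem applies.

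Next I would check the standing hypothesis of the preceding theorem, namely the existence of $J\ge 1$ such that for all $m,j\ge 1$ there is $m_j\ge 1$ with $\sup_{n\ge 0}\limsup_{k\to-\infty}\frac{a_{j,k}a_{m,n+k}}{a_{J,k}a_{m_j,n+k}}<\infty$. Because the matrix is constant in its first index, one may simply take $J=1$ and $m_j=m$, whereupon every such quotient equals $1$ and the supremum is exactly $1<\infty$; the hypothesis is thus satisfied trivially. Applying the preceding theorem to $X=l^p(v,\mathbb{Z})$ and to $X=c_0(v,\mathbb{Z})$ then yields that every hypercyclic bilateral weighted shift on either space, over either scalar field, possesses a hypercyclic subspace. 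I do not expect any genuine obstacle: the only points demanding care are confirming that the constant-in-$j$ matrix faithfully recovers the topology of the weighted Banach space and that the matrix condition holds with strict finiteness, both of which are immediate from the identifications above.
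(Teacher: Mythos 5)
Your proposal is correct and is exactly the derivation the paper intends: the corollary is stated without proof immediately after the bilateral K\"othe-space theorem, to be obtained by viewing $l^p(v,\mathbb{Z})$ as $\lambda^p(A,\mathbb{Z})$ with $a_{j,k}=(v_k)^{1/p}$ and $c_0(v,\mathbb{Z})$ as $c_0(A,\mathbb{Z})$ with $a_{j,k}=v_k$, just as the paper does for the unilateral spaces $l^p(v)$ and $c_0(v)$. Your observation that the matrix condition holds trivially (with $J=1$, $m_j=m$, all quotients equal to $1$) and that the K\"othe formulation covers both scalar fields at once is precisely the point of the paper's approach.
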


\section{Functions of weighted shifts}

We finish by looking at the operators of the form $P(B_w)$, where $P
$ is a non-constant polynomial and $B_w$ is a weighted shift. In particular, we prove that for every non-constant polynomial $P$, the operator $P(D)$ possesses a hypercyclic subspace on $H(\mathbb{C})$ where $D$ is the differentiation operator. It was already known that $D$ possesses a hypercyclic subspace (see \cite{Shkarin}) and that if $\phi$ is an entire function of exponential type which is not a polynomial then $\phi(D)$ possesses a hypercyclic subspace (see \cite{Petersson}). However the question was open in the case where $\phi$ is an arbitrary non-constant polynomial. 

\begin{lemma}\label{lem 0}
Let $X$ be a Fréchet sequence space with a continuous norm, $(p_j)$ an increasing sequence of norms defining the topology of $X$ and $B_w:X\rightarrow X$ a weighted shift. The following assertions are equivalent:
\begin{enumerate}[\upshape (i)]
\item there exists $J\ge 1$ such that for any $j\ge 1$, we have
\[\sup_{n\ge1}\sup_{N\ge 1}\inf_{k\ge N}\frac{p_{J}(B_w^ne_k)}{p_{j}(e_{k})}>0;\]
\item there exists $J\ge 1$ such that for any $j\ge 1$, we have
  \[\sup_{n\ge1}\sup_{N\ge 1}\inf_{k\ge N}\max_{1\le m\le n}\frac{p_{J}(B_w^me_k)}{p_{j}(e_{k})}>0.\]
\end{enumerate}
\end{lemma}
\begin{proof}
The implication (i)$\Rightarrow$(ii) is obvious.
On the other hand, if we suppose that (ii) is satisfied then
there exists $j_0$ such that for any $j\ge 1$, there exist $0< \varepsilon_j\le 1$ and $n_j,N_j\ge 1$ such that for any $k\ge N_j$, we have for some $1\le m\le n_j$,
\[p_{j_0}(B_w^me_k)\ge \varepsilon_j p_j(e_k).\]
Using the previous property repeatedly for $j=j_0$, we deduce that for any $n\ge 1$, any $1\le m\le n$ and for any $k\ge N_{j_0}+n$, there exists $n\le m'< n+n_{j_0}$ such that
\[p_{j_0}(B_w^{m'}e_k)\ge \varepsilon_{j_0}^{n} p_{j_0}(B_w^me_k).\]
By continuity, there also exist $J\ge 1$ and $K>0$ such that for any $0\le m< n_{j_0}$, for any $x\in X$, we have \[p_{j_0}(B_w^mx)\le Kp_J(x).\]
Let $j\ge 1$. Let $N=N_j+N_{j_0}+n_j$.
We deduce that, for any $k\ge N$, there exist $1\le m_1\le n_j$ and $n_j\le m_2<n_j+n_{j_0}$ such that
\begin{align*}
p_J(B_w^{n_j}e_k)&=p_J(B_w^{m_2-(m_2-n_j)}e_k)\ge \frac{1}{K}p_{j_0}(B_w^{m_2}e_k)\\
&\ge \frac{\varepsilon_{j_0}^{n_j}}{K}p_{j_0}(B_w^{m_1}e_k)\ge \frac{\varepsilon_{j_0}^{n_j}\varepsilon_j}{K}p_j(e_k).
\end{align*}
\end{proof}

\begin{theorem}\label{thm poly}
Let $B_w:X\rightarrow X$ be a weighted shift, where $X=\lambda^p(A)$ with $1\le p<\infty$ or $X=c_0(A)$, and $P$ a non-constant polynomial such that $P(B_w)$ satisfies the Hypercyclicity Criterion.
Suppose that there exists $J\ge 1$ such that for any $m\ge 1$, for any $j\ge 1$, there exists $m_j\ge 1$ such that \begin{equation*}
\sup_{n\ge 0}\limsup_{k>n} \frac{p_{j}(e_{k-n})p_{m}(e_{k})}{p_{J}(e_{k-n})p_{m_{j}}(e_{k})}<\infty.
\end{equation*}
If there exists $m\ge 1$ such that for any $n\ge 1$,
\[\inf_{k\ge n+1}\frac{p_{J}(B_w^ne_k)}{p_m(e_{k})}=0\]
and if one of the following two conditions is satisfied:
\begin{enumerate}[\upshape (1)]
 \item $P$ has a constant term $|c_0|\le 1$;
 \item there exists $m\ge 1$ such that $\lim_k \frac{p_J(e_k)}{p_m(e_k)}=0$,
\end{enumerate}
then $P(B_w)$ possesses a hypercyclic subspace.
\end{theorem}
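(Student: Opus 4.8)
The plan is to apply Theorem \ref{thm M} to the sequence $T_n := P(B_w)^n$, using on both copies of $X$ the same increasing sequence of norms $(p_j)$. Since $P(B_w)$ satisfies the Hypercyclicity Criterion, it satisfies condition $(C)$ for some increasing sequence $(n_l)$, so it remains only to produce a non-increasing sequence of infinite-dimensional closed subspaces $(M_{l'})$ on which the powers $P(B_w)^{n_{l'}}$ have controlled norm. I would take each $M_{l'}$ to be the closed linear span of a subfamily $\{e_{k_l}:l\ge l'\}$ of the canonical basis, indexed by a sparse sequence $(k_l)$ to be built recursively. Writing $P(z)^n=\sum_{i=0}^{nd}a_i^{(n)}z^i$ with $d=\deg P$, we have $P(B_w)^n e_k=\sum_{i=0}^{nd}a_i^{(n)}B_w^i e_k$, so $P(B_w)^{n_{l'}}e_{k_l}$ is supported on the block of indices $[k_l-n_{l'}d,\,k_l]$. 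If the gaps satisfy $k_{l+1}-k_l>n_l d$, these blocks are pairwise disjoint for $l\ge l'$ (using that $(n_l)$ is increasing), and the norm of $P(B_w)^{n_{l'}}x$ on $M_{l'}$ decouples into the $\ell^p$- (respectively $\sup$-) combination of the quantities $p_n(P(B_w)^{n_{l'}}e_{k_l})$. The whole problem thus reduces to bounding $p_n(P(B_w)^{n_{l'}}e_{k_l})\le C_n\,p_{m(n)}(e_{k_l})$ uniformly for $l\ge l'$ large, exactly as in the proof of Theorem \ref{Kothe 2}.

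To estimate $p_n(P(B_w)^{n_{l'}}e_k)\le |c_0|^{n_{l'}}p_n(e_k)+\sum_{i=1}^{n_{l'}d}|a_i^{(n_{l'})}|\,p_n(B_w^i e_k)$, I would treat the higher-order terms and the constant term separately. For the higher-order terms the transposition identity $\frac{p_n(B_w^i e_k)}{p_{m_n}(e_k)}=\frac{p_J(B_w^i e_k)}{p_m(e_k)}\cdot\frac{p_n(e_{k-i})p_m(e_k)}{p_J(e_{k-i})p_{m_n}(e_k)}$ lets me, via the transposition hypothesis \eqref{cond B}, replace the norm index $n$ by the fixed $J$ at the cost of a uniformly bounded factor. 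Combined with the hypothesis $\inf_k p_J(B_w^i e_k)/p_m(e_k)=0$, this shows that for every numerator index the single-power quantity of Lemma \ref{lem 0}(i) vanishes; hence, by the contrapositive of the implication (ii)$\Rightarrow$(i) in Lemma \ref{lem 0}, the max-over-powers quantity $\inf_k\max_{1\le i\le n}p_J(B_w^i e_k)/p_{\tilde m}(e_k)$ also vanishes. This simultaneity over all $1\le i\le n$ at a single index is precisely what is needed: for a fixed $l'$ there are only finitely many relevant powers $n_{l'}$ and coefficients $a_i^{(n_{l'})}$, so choosing $k_l$ among the (arbitrarily large) indices realizing this infimum makes the entire higher-order sum smaller than any prescribed multiple of $p_{m(n)}(e_{k_l})$.

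For the constant term $|c_0|^{n_{l'}}p_n(e_k)$ the two cases diverge. In Case (1), $|c_0|\le 1$ forces $|c_0|^{n_{l'}}\le 1$, so this term is at most $p_n(e_k)\le p_{m(n)}(e_k)$ and needs no further work. In Case (2), applying \eqref{cond B} with $m=m'$ (where $\lim_k p_J(e_k)/p_{m'}(e_k)=0$) and shift $0$ yields, for every index $j$, an $m_j$ with $p_j(e_k)/p_{m_j}(e_k)\to 0$; thus even when $|c_0|>1$ the ratio $|c_0|^{n_{l'}}p_n(e_k)/p_{m_n}(e_k)$ can be driven below $1$ by taking $k_l$ large, for each of the finitely many $l'\le l$. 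Assembling these estimates, I would construct $(k_l)$ recursively so that at stage $l$ the gap condition $k_{l+1}-k_l>n_l d$ and all the smallness requirements for indices up to $l$ hold simultaneously; then $(M_{l'})$ satisfies the growth condition of Theorem \ref{thm M}, and $P(B_w)$ has a hypercyclic subspace.

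The main obstacle is the uniform control of the higher-order part of $P(B_w)^{n_{l'}}$: the coefficients $a_i^{(n_{l'})}$ of $P(z)^{n_{l'}}$ can grow rapidly in $n_{l'}$ and there are $n_{l'}d$ of them, so I cannot bound them term by term from the per-power hypothesis alone — I need a single index $k_l$ at which all the lower-order images $B_w^i e_{k_l}$, $1\le i\le n_{l'}d$, are simultaneously negligible. Securing this simultaneity is exactly the role of the max-over-powers equivalence of Lemma \ref{lem 0}, and it is the step where the argument genuinely goes beyond the proof of Theorem \ref{Kothe 2}; the secondary delicate point, the treatment of the constant term $c_0^{n_{l'}}$, is what splits the conclusion into the two hypotheses on $P$.
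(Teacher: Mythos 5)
Your proposal is correct and follows essentially the same route as the paper's own proof: reduction to Theorem \ref{thm M} via condition $(C)$, a sparse block basis with gaps exceeding $n_l d$ so that the norms decouple, transposition of the single-power hypothesis from the fixed level $J$ to arbitrary $j$, the contrapositive of Lemma \ref{lem 0} to obtain simultaneous smallness of $p_J(B_w^i e_k)$ for all $1\le i\le nd$ at one index (absorbing the coefficients of $P^n$), and the two-case treatment of the constant term $c_0^n$. This matches the paper's recursive construction of the sequence $(s_j)$ and the subspaces $M_k=\overline{\text{span}}\{e_{s_j}:j>k\}$ in every essential step.
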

\begin{proof}
Suppose that $P$ is a polynomial of degree $d$ with constant term $c_0$ such that $P(B_w)$ satisfies the Hypercyclicity Criterion for the sequence $(n_k)$. For any $n\ge 1$, we thus have $P^n(B_w)=\sum_{i=0}^{nd}c_i^{(n)}B_w^i$ for some constants $c_i^{(n)}$ and we fix 
\[K_n=\max\{|c_i^{(k)}|:i\le kd, k\le n\}.\]
By assumption, we know that there exists $m\ge 1$ such that for any $j\ge 1$, there exists $m_j\ge 1$ such that
\[\sup_{n\ge1}\sup_{N\ge 1}\inf_{k\ge N}\frac{p_{j}(B_w^ne_k)}{p_{m_{j}}(e_{k})}=
\sup_{n\ge1}\sup_{N>n}\inf_{k\ge N}\frac{p_{J}(B_w^ne_k)}{p_{m}(e_{k})}
\frac{p_{j}(e_{k-n})p_{m}(e_{k})}{p_{J}(e_{k-n})p_{m_{j}}(e_{k})}=0.\]
We deduce from Lemma \ref{lem 0} that there exists $m_J\ge 1$ such that
for any $n,N\ge 1$ 
\[\inf_{k\ge N}\max_{1\le i\le n}\frac{p_{J}(B_w^ie_k)}{p_{m_{J}}(e_{k})}=0.\]
and in particular that for any $n,N\ge 1$, there exists $k\ge N$ such that for any $1\le i\le nd$, we have
\begin{equation}
ndK_n p_{J}(B_w^ie_k)\le p_{m_{J}}(e_{k}).
\label{add 1}
\end{equation}
Moreover, we can suppose that we have $m_J\ge J$, and if there exists $m\ge 1$ such that $\lim_k \frac{p_J(e_k)}{p_m(e_k)}=0$, we can also suppose that $m_J$ satisfies $\lim_k \frac{p_J(e_k)}{p_{m_J}(e_k)}=0$.
We let $C_J=2$ and for any $j\ne J$, we let $m_j\ge 1$ and $C_j>0$ such that 
\begin{equation}
\sup_{n\ge 0}\limsup_{k>n} \frac{p_{j}(e_{k-n})p_{m_{J}}(e_{k})}{p_{J}(e_{k-n})p_{m_j}(e_{k})}<C_j.
\label{add 2}
\end{equation}
Using \eqref{add 1} and \eqref{add 2}, we can then construct an increasing sequence $(s_j)_{j\ge 1}$ such that for any $j\ge 2$
\begin{enumerate}
\item $\displaystyle s_j-n_{j-1}d>s_{j-1};$
\item $\displaystyle \text{for any $1\le i\le n_{j-1}d$,}\quad n_{j-1}dK_{n_{j-1}} p_{J}(B_w^{i}e_{s_j})\le p_{m_{J}}(e_{s_j})$;
\item $\displaystyle \text{for any $l<j$, for any $0\le i\le n_{j-1}d$,}\quad \frac{p_{l}(e_{s_j-i})p_{m_{J}}(e_{s_j})}{p_{J}(e_{s_j-i})p_{m_l}(e_{s_j})}<C_l$;
\end{enumerate}
and as we suppose that either $|c_0|\le 1$ or $\lim_k \frac{p_J(e_k)}{p_{m_J}(e_k)}=0$, we can also construct $(s_j)_{j\ge 1}$ such that 
\begin{enumerate}
\setcounter{enumi}{3}
\item $\displaystyle \text{for any $1\le k<j$,} \quad |c_0|^{n_{k}} p_{J}(e_{s_j})\le p_{m_J}(e_{s_j})$.
\end{enumerate}
So we have for any $l\le k<j$, for any $1\le i\le n_kd$,
\begin{equation}
\label{add 3}
\begin{aligned}
n_kdK_{n_k} p_{l}(B_w^{i}e_{s_j})&\le n_{j-1}dK_{n_{j-1}}\frac{p_{J}(B_w^{i}e_{s_j})}{ p_{m_{J}}(e_{s_j})}\frac{p_{l}(e_{s_j-i})p_{m_{J}}(e_{s_j})}{p_{J}(e_{s_j-i})}\\
&<C_l p_{m_{l}}(e_{s_j})\quad\text{by (2) and (3)} 
\end{aligned}
\end{equation}
and we also have, for any $l\le k<j$, by (3) and (4),
\begin{equation}
\label{add 4}
  \begin{aligned}
|c_0|^{n_k}p_{l}(e_{s_j})=  |c_0|^{n_k}\frac{p_{J}(e_{s_j})}{ p_{m_{J}}(e_{s_j})}\frac{p_{l}(e_{s_j})p_{m_{J}}(e_{s_j})}{p_{J}(e_{s_j})}
<C_l p_{m_{l}}(e_{s_j}). 
  \end{aligned}
\end{equation}
Therefore, for any $l\le k<j$, we have by \eqref{add 3} and \eqref{add 4}
\begin{equation}
\label{add 5}
\begin{aligned}
p_l(P^{n_k}(B_w)(e_{s_j}))&=p_l\Big(\sum_{i=0}^{n_kd}c_i^{(n_k)}B_w^ie_{s_j}\Big)\le \sum_{i=0}^{n_kd}|c_i^{(n_k)}|p_l(B_w^ie_{s_j})\\
&\le |c_0|^{n_k}p_{l}(e_{s_j})+ n_kdK_{n_k}\max_{1\le i\le n_kd}p_{l}(B_w^{i}e_{s_j})\\
&\le 2C_l p_{m_{l}}(e_{s_j}).
\end{aligned}
\end{equation}
We consider the infinite-dimensional closed subspaces $M_k$ defined by
\[M_k=\overline{\text{span}}\{e_{s_j}:j> k\}.\]
As for any $j>k$, we have $s_j-n_{k}d>s_{j-1}$, we deduce form \eqref{add 5} that for any $l\le k$, for any $x\in M_k$, if $X=\lambda^p(A)$, we have
\begin{align*}
p_l(P^{n_k}(B_w)x)^p&=p_l\Big(P^{n_k}(B_w)\Big(\sum_{j=k+1}^\infty x_{s_j} e_{s_j}\Big)\Big)^p
= \sum_{j=k+1}^\infty |x_{s_j}|^p p_l(P^{n_k}(B_w)(e_{s_j}))^p\\
&\le \sum_{j=k+1}^\infty |x_{s_j}|^p (2C_l)^p p_{m_l}(e_{s_j})^p
= (2C_l)^p p_{m_l}(x)^p
\end{align*}
and if $X=c_0(A)$, we have
\begin{align*}
p_l(P^{n_k}(B_w)x)&=p_l\Big(P^{n_k}(B_w)\Big(\sum_{j=k+1}^\infty x_{s_j} e_{s_j}\Big)\Big)
= \max_{j\ge k+1} |x_{s_j}| p_l(P^{n_k}(B_w)(e_{s_j}))\\
&\le \max_{j\ge k+1} |x_{s_j}| 2C_l p_{m_l}(e_{s_j})
= 2C_l p_{m_l}(x).
\end{align*}
We conclude using Theorem \ref{thm M}.
\end{proof}

This theorem, expressed in terms of the matrix $A$, gives the following result:

\begin{theorem}\label{thm poly2}
Let $B_w:X\rightarrow X$ be a weighted shift, where $X=\lambda^p(A)$ with $1\le p<\infty$ or $X=c_0(A)$ and $P$ a non-constant polynomial such that $P(B_w)$ satisfies the Hypercyclicity Criterion. Suppose that there exists $J\ge 1$ such that for any $m\ge 1$, for any $j\ge 1$, there exists $m_j\ge 1$ such that 
\begin{equation*}
\sup_{n\ge 0}\limsup_k \frac{a_{j,k}a_{m,n+k}}{a_{J,k}a_{m_{j},n+k}}<\infty.
\end{equation*}
If there exists $m\ge 1$ such that for any $n\ge 1$
\[\inf_{k\ge 1}\frac{\prod_{\nu=1}^{n}|w_{\nu + k}|a_{J,k}}{a_{m,n+k}}=0\]
and if one of the following two conditions is satisfied:
\begin{enumerate}[\upshape (1)]
\item $P$ has a constant term $|c_0|\le 1$;
\item there exists $m\ge 1$ such that $\lim_k \frac{a_{J,k}}{a_{m,k}}=0$,
\end{enumerate}
then $P(B_w)$ possesses a hypercyclic subspace.
\end{theorem}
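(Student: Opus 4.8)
The plan is to recognize that Theorem~\ref{thm poly2} is simply the restatement of Theorem~\ref{thm poly} obtained once the abstract seminorms $p_j$ are replaced by their explicit values on the K\"othe space $X$. I would therefore prove it purely by substitution, checking that each hypothesis and the conclusion of Theorem~\ref{thm poly} correspond, term by term, to the statement here. No new analytic argument is needed; the whole content is a dictionary between the two formulations.

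Two computational facts drive everything. First, for both $X=\lambda^p(A)$ and $X=c_0(A)$ one has $p_j(e_k)=a_{j,k}$, which is immediate from the definitions of the seminorms. Second, the weighted shift carries each basis vector to a scalar multiple of another basis vector: with the convention $B_w e_k=w_k e_{k-1}$ one gets $B_w^n e_k=\big(\prod_{\nu=0}^{n-1}w_{k-\nu}\big)e_{k-n}$ for $k>n$, whence
\[
p_J(B_w^n e_k)=\Big(\prod_{\nu=0}^{n-1}|w_{k-\nu}|\Big)a_{J,k-n}.
\]
After the index change $k\mapsto k+n$ the product $\prod_{\nu=0}^{n-1}|w_{k+n-\nu}|$ becomes $\prod_{\nu=1}^{n}|w_{\nu+k}|$, exactly the product appearing in the statement.

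With these in hand each clause translates mechanically. The main decay hypothesis $\inf_{k\ge n+1}p_J(B_w^n e_k)/p_m(e_k)=0$ of Theorem~\ref{thm poly} becomes, after dividing the display above by $p_m(e_k)=a_{m,k}$ and setting $k'=k-n$, the condition $\inf_{k\ge 1}\prod_{\nu=1}^{n}|w_{\nu+k}|\,a_{J,k}/a_{m,n+k}=0$ stated here. Likewise the hypothesis $\sup_{n\ge0}\limsup_{k>n}p_j(e_{k-n})p_m(e_k)/(p_J(e_{k-n})p_{m_j}(e_k))<\infty$ becomes, after substituting $p_\cdot(e_\cdot)=a_{\cdot,\cdot}$ and putting $k'=k-n$, precisely $\sup_{n\ge0}\limsup_k a_{j,k}a_{m,n+k}/(a_{J,k}a_{m_j,n+k})<\infty$. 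The constant-term alternative~(1) is verbatim the same, alternative~(2) becomes $\lim_k a_{J,k}/a_{m,k}=0$ via $p_\cdot(e_k)=a_{\cdot,k}$, and the assumption that $P(B_w)$ satisfies the Hypercyclicity Criterion is carried over unchanged. Invoking Theorem~\ref{thm poly} then produces the desired hypercyclic subspace.

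The only point requiring care is the bookkeeping of the index shifts: one must verify that the range $k>n$, needed for $e_{k-n}$ to be a genuine basis vector, corresponds under $k'=k-n$ to $k'\ge1$, and that the $\limsup$ as $k\to\infty$ is unaffected by the translation. Since none of these substitutions alters the relevant suprema, infima, or limits, there is no genuine difficulty here; the substance of the theorem has already been established in Theorem~\ref{thm poly}, and what remains is exactly the explicit identifications $p_j(e_k)=a_{j,k}$ and the weighted-shift formula.
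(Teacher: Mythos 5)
Your proposal is correct and follows essentially the same route as the paper: the paper offers no separate argument for Theorem~\ref{thm poly2}, introducing it with the words ``This theorem, expressed in terms of the matrix $A$,'' i.e.\ exactly your translation of Theorem~\ref{thm poly} via the identities $p_j(e_k)=a_{j,k}$ and $p_J(B_w^n e_k)=\bigl(\prod_{\nu=0}^{n-1}|w_{k-\nu}|\bigr)a_{J,k-n}$ together with the index shift $k\mapsto k+n$. Your bookkeeping (the correspondence of $k\ge n+1$ with $k'\ge 1$ and the invariance of the $\limsup$ under translation) is accurate, so nothing is missing.
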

\begin{remark}
We do not suppose in Theorems \ref{thm poly} and \ref{thm poly2} that $B_w$ is hypercyclic.
\end{remark}

\begin{cor}
Let $D:H(\mathbb{C})\rightarrow H(\mathbb{C})$ be the differentiation operator.
For every non-constant polynomial $P$, the operator $P(D)$ possesses a hypercyclic subspace.
\end{cor}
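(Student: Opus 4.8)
The goal is to verify that $P(D)$ on $H(\mathbb{C})$ satisfies the hypotheses of Theorem~\ref{thm poly2} for every non-constant polynomial $P$. The plan is to realize $D$ as the weighted shift $B_w$ with $w_n=n$ on $H(\mathbb{C})=\lambda^1(A)$, where $a_{j,k}=j^k$, and then to check the three conditions of that theorem in turn: the matrix condition involving the $a_{j,k}$, the vanishing-infimum condition on the weights, and at least one of the two alternatives $(1)$, $(2)$ on $P$.

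First I would recall, as was verified just before this corollary in the excerpt, that for $H(\mathbb{C})$ with $a_{j,k}=j^k$ the matrix condition
\[
\sup_{n\ge 0}\limsup_k \frac{a_{j,k}a_{m,n+k}}{a_{J,k}a_{m_{j},n+k}}<\infty
\]
holds with $J=1$ and $m_j=2jm$; indeed the displayed limit is $0$. So the structural hypothesis on $A$ is free. Next I would check that $P(D)$ satisfies the Hypercyclicity Criterion: since $D$ itself does and $P$ is non-constant, $P(D)$ is hypercyclic (in fact mixing) by the classical theory of $\phi(D)$, so it satisfies the Hypercyclicity Criterion, as required by the theorem.

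The remaining work is the weight condition. With $J=1$ (so $a_{J,k}=1$) and $w_\nu=\nu$, the quantity to control is
\[
\inf_{k\ge 1}\frac{\prod_{\nu=1}^{n}(\nu+k)}{m^{\,n+k}}
=\inf_{k\ge 1}\frac{(k+n)!/k!}{m^{\,n+k}}.
\]
For fixed $n$, I would bound $\prod_{\nu=1}^n(\nu+k)\le (k+n)^n$ and then argue exactly as in equation~\eqref{diff}: for any fixed $m\ge 2$,
\[
\inf_{k\ge 1}\frac{(k+n)^n}{m^{\,n+k}}=0,
\]
since the exponential $m^{k}$ dominates the polynomial $(k+n)^n$ as $k\to\infty$. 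Hence taking any $m\ge 2$ gives $\inf_{k\ge1}\frac{\prod_{\nu=1}^n|w_{\nu+k}|a_{J,k}}{a_{m,n+k}}=0$ for every $n\ge 1$, which is the second hypothesis.

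Finally I must check one of the alternatives on $P$. The cleanest route is condition $(2)$: with $J=1$, I would exhibit $m\ge 2$ such that $\lim_k \frac{a_{J,k}}{a_{m,k}}=\lim_k \frac{1}{m^{k}}=0$, which is immediate. Thus $(2)$ holds regardless of the constant term of $P$, so I need not even separate the cases $|c_0|\le 1$ and $|c_0|>1$. All hypotheses of Theorem~\ref{thm poly2} being satisfied, $P(D)=P(B_w)$ possesses a hypercyclic subspace. The only place demanding any care is matching the two different indexing conventions ($D z^n=nz^{n-1}$ versus $B_w e_k=w_k e_{k-1}$) and confirming that the constant term of $P$ plays no role here because alternative $(2)$ is available; I expect this bookkeeping, rather than any genuine estimate, to be the main thing to get right.
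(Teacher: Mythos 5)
Your proposal is correct and follows essentially the same route as the paper: apply Theorem~\ref{thm poly2} with $H(\mathbb{C})=\lambda^1(A)$, $a_{j,k}=j^k$, $J=1$, using the already-verified matrix condition and the estimate \eqref{diff}, with alternative (2) trivially available so the constant term of $P$ is irrelevant. The one point to tighten is the Hypercyclicity Criterion step: that $D$ satisfies it does not by itself transfer to $P(D)$; the correct justification, as the paper notes, is the Godefroy--Shapiro theorem that every operator $T\ne\lambda I$ commuting with $D$ is chaotic and hence satisfies the Criterion --- which is what your appeal to ``the classical theory of $\phi(D)$'' amounts to.
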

\begin{proof}
We deduce this result directly from Theorem \ref{thm poly2}, from \eqref{diff} and from a result of Godefroy and Shapiro (see \cite{God}) which says that every operator $T:H(\mathbb{C})\rightarrow H(\mathbb{C})$, $T\ne \lambda I$, commuting with $D$ is chaotic and thus satisfies the Hypercyclicity Criterion.
\end{proof}

\begin{cor}\label{cor poly3}
Let $B_w:l^p\rightarrow l^p$ be a weighted shift with $1\le p<\infty$.
If for any $n\ge 1$, \[\inf_{k\ge 1}\prod_{\nu=1}^{n}|w_{\nu + k}|=0\] then
$I+B_w$ possesses a hypercyclic subspace.
\end{cor}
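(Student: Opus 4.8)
The plan is to read this off from Theorem~\ref{thm poly2} applied to the polynomial $P(z)=1+z$. First I would realize $l^p$ as the K\"othe sequence space $\lambda^p(A)$ attached to the trivial matrix $a_{j,k}=1$ for all $j,k\ge1$: this matrix satisfies $a_{j,k}>0$ and $a_{j,k}\le a_{j+1,k}$, and it gives $p_j=\|\cdot\|_p$ for every $j$, so that $\lambda^p(A)=l^p$ and $B_w$ is exactly the given weighted shift $B_we_k=w_ke_{k-1}$ on $X=\lambda^p(A)$. Since $P(z)=1+z$ is non-constant with constant term $c_0=1$, we have $|c_0|=1\le1$, so alternative (1) in Theorem~\ref{thm poly2} is met and $P(B_w)=I+B_w$. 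Note that, as in the Remark following Theorem~\ref{thm poly2}, we are not required to assume that $B_w$ itself is hypercyclic.

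Next I would verify the two quantitative hypotheses of Theorem~\ref{thm poly2} with $J=1$, both of which collapse because the matrix is constant. For the spectral-type condition, every quotient $\frac{a_{j,k}a_{m,n+k}}{a_{J,k}a_{m_j,n+k}}$ equals $1$, so for any choice of $m_j$ one gets $\sup_{n\ge0}\limsup_k\frac{a_{j,k}a_{m,n+k}}{a_{J,k}a_{m_j,n+k}}=1<\infty$, which is condition~\eqref{cond B}. For the collapse condition, using $a_{J,k}=a_{m,n+k}=1$,
\[
\inf_{k\ge1}\frac{\prod_{\nu=1}^n|w_{\nu+k}|\,a_{J,k}}{a_{m,n+k}}=\inf_{k\ge1}\prod_{\nu=1}^n|w_{\nu+k}|=0\qquad\text{for every }n\ge1
\]
by the hypothesis of the corollary; the value does not depend on $m$, so any $m\ge1$ serves. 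Thus all hypotheses of Theorem~\ref{thm poly2} bearing on the matrix $A$ and on the weights reduce to the single product condition in the statement and are satisfied.

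The only remaining hypothesis of Theorem~\ref{thm poly2} is that $P(B_w)=I+B_w$ satisfies the Hypercyclicity Criterion, and I expect this to be the main obstacle, since it genuinely cannot be deduced from the product condition alone. I would justify it by the known fact that for every (bounded) weighted backward shift $B_w$ on $l^p$ the operator $I+B_w$ is hypercyclic and weakly mixing, hence satisfies the Hypercyclicity Criterion; this goes back to Salas's analysis of hypercyclic weighted shifts \cite{Salas}. It is worth flagging why a naive verification fails: on the dense set of finitely supported sequences the identity summand keeps the lowest nonzero coordinate of $(I+B_w)^nx$ fixed, so $(I+B_w)^nx\not\to0$ and the forward-collapse part of the Criterion cannot be obtained on that obvious dense set. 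This is precisely the delicate point, and it is resolved by the dedicated construction of (weakly mixing) hypercyclic vectors for operators of the form $I+B_w$ rather than by an elementary estimate. Once the Hypercyclicity Criterion for $I+B_w$ is granted, Theorem~\ref{thm poly2} applies verbatim and produces a hypercyclic subspace for $I+B_w$, which is the assertion.
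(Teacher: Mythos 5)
Your proposal is correct and takes essentially the same route as the paper: both deduce the corollary from Theorem~\ref{thm poly2} applied to $P(z)=1+z$ (so $|c_0|=1\le 1$), with the K\"othe-matrix hypotheses trivially verified for $l^p$, and both invoke as a black box the fact that $I+B_w$ satisfies the Hypercyclicity Criterion for every weighted shift on $l^p$. The only discrepancy is attribution: the paper cites Le\'on--Montes \cite{LeonMontes2} for that fact, whereas you credit Salas \cite{Salas}, who proved hypercyclicity of $I+B_w$ but not the Hypercyclicity Criterion statement --- a citation issue, not a mathematical gap.
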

\begin{proof}
It is a direct consequence of Theorem \ref{thm poly2} and a result of Le\'on and Montes (see \cite{LeonMontes2}) which says that for every weighted shift $B_w:l^p\rightarrow l^p$, $I+B_w$ satisfies the Hypercyclicity Criterion.
\end{proof}
\begin{remark}
This last result is not completely satisfying if we compare with the result obtained by Le\`on and Montes for complex space $l^2$ (see \cite{LeonMontes}).
\end{remark}

We can also deduce the following results:
 
\begin{cor}
Let $B_w:X\rightarrow X$ be a weighted shift, where $X=\lambda^p(A)$ with $1\le p<\infty$ or $X=c_0(A)$.
Suppose that there exists $J\ge 1$ such that for any $m\ge 1$, for any $j\ge 1$, there exists $m_j\ge 1$ such that for any $n\ge 0$,
\begin{equation}
\lim_k \frac{a_{j,k}a_{m,n+k}}{a_{J,k}a_{m_{j},n+k}}=0.
\label{eq 0}
\end{equation}
If there exists $m\ge 1$ such that we have
\[\sup_{n\ge 1}\inf_{k\ge 1}\frac{\prod_{\nu=1}^{n}|w_{\nu + k}|a_{J,k}}{a_{m,n+k}}<\infty,\]
then for every non-constant polynomial $P$, if $P(B_w)$ satisfies the Hypercyclicity Criterion, $P(B_w)$ possesses a hypercyclic subspace.
\end{cor}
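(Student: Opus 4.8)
The plan is to reduce the statement entirely to Theorem \ref{thm poly2}, whose hypotheses are tailored to exactly this situation. I would fix an arbitrary non-constant polynomial $P$ with $P(B_w)$ satisfying the Hypercyclicity Criterion, so that it suffices to verify the three remaining hypotheses of Theorem \ref{thm poly2}: the matrix regularity condition, the vanishing condition on the products of weights, and one of the two alternatives (1)--(2). The matrix regularity condition is immediate from \eqref{eq 0}: since for every $m,j$ there is $m_j$ with $\lim_k \frac{a_{j,k}a_{m,n+k}}{a_{J,k}a_{m_j,n+k}}=0$ for each $n\ge 0$, the corresponding limsup is $0$ and hence $\sup_{n\ge 0}\limsup_k \frac{a_{j,k}a_{m,n+k}}{a_{J,k}a_{m_j,n+k}}=0<\infty$. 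To cover polynomials with $|c_0|>1$, I would check alternative (2): applying \eqref{eq 0} with the parameter choice $m=J$, $j=J$ and $n=0$ yields an index $\mu$ with $\lim_k \frac{a_{J,k}}{a_{\mu,k}}=0$. Thus (2) holds unconditionally under \eqref{eq 0}, and Theorem \ref{thm poly2} becomes applicable to \emph{every} non-constant $P$.

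The real content, and the step I expect to be the main obstacle, is producing the vanishing condition ``there exists $m$ with $\inf_{k\ge 1}\frac{\prod_{\nu=1}^n|w_{\nu+k}|a_{J,k}}{a_{m,n+k}}=0$ for every $n\ge 1$'' out of the merely \emph{finite} hypothesis $\sup_{n}\inf_{k}\frac{\prod_{\nu=1}^n|w_{\nu+k}|a_{J,k}}{a_{m,n+k}}<\infty$. The difficulty is that a finite infimum over all $k\ge 1$ may be attained at small indices, precisely where the correcting factor supplied by \eqref{eq 0} is not yet small, so a naive product estimate fails. To get around this I would invoke the contrapositive of Proposition \ref{4.4} with $j=J$: the finiteness hypothesis is equivalent to the existence of an $m$ with $\sup_n\sup_N\inf_{k\ge N}\frac{\prod_{\nu=1}^n|w_{\nu+k}|a_{J,k}}{a_{m,n+k}}\le 1$, that is $\liminf_{k\to\infty}\frac{\prod_{\nu=1}^n|w_{\nu+k}|a_{J,k}}{a_{m,n+k}}\le 1$ for each $n$. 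This upgrade from ``infimum over all $k$'' to ``liminf as $k\to\infty$'' is the linchpin: it forces the near-optimal indices out to infinity, which is exactly where \eqref{eq 0} can take effect.

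Finally I would combine the two facts. Choosing $m'$ from \eqref{eq 0} (with $j=J$ and the $m$ just obtained) so that $\lim_k \frac{a_{m,n+k}}{a_{m',n+k}}=0$ for all $n$, I would, for each fixed $n$, select a subsequence $k_i\to\infty$ along which $\frac{\prod_{\nu=1}^n|w_{\nu+k_i}|a_{J,k_i}}{a_{m,n+k_i}}$ tends to a limit $L\le 1$. Then
\[
\frac{\prod_{\nu=1}^n|w_{\nu+k_i}|\,a_{J,k_i}}{a_{m',\,n+k_i}}
=\frac{\prod_{\nu=1}^n|w_{\nu+k_i}|\,a_{J,k_i}}{a_{m,\,n+k_i}}\cdot\frac{a_{m,\,n+k_i}}{a_{m',\,n+k_i}}
\longrightarrow L\cdot 0=0,
\]
and by nonnegativity $\inf_{k\ge 1}\frac{\prod_{\nu=1}^n|w_{\nu+k}|a_{J,k}}{a_{m',n+k}}=0$ for every $n\ge 1$. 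All hypotheses of Theorem \ref{thm poly2} being verified (the matrix condition, the vanishing condition with $m'$, and alternative (2) with $\mu$), that theorem yields that $P(B_w)$ possesses a hypercyclic subspace, as required.
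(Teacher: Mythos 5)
Your proposal is correct and follows essentially the same route as the paper: reduce to Theorem \ref{thm poly2}, use the contrapositive of Proposition \ref{4.4} to push the near-optimal indices out to infinity, and then use \eqref{eq 0} with $j=J$ to produce the index $m'$ giving the vanishing condition. The only difference is cosmetic — you spell out the verification of the matrix regularity condition and of alternative (2), which the paper leaves implicit in its first deduction.
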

\begin{proof}
Considering $j=J$ in equation \eqref{eq 0}, we first deduce that for any $m\ge 1$, there exists $m'\ge 1$ such that we have
\begin{equation*}
\lim_k \frac{a_{m,k}}{a_{m',k}}=0.
\end{equation*}
Moreover if there exists $m\ge 1$ such that we have
\[\sup_{n\ge 1}\inf_{k\ge 1}\frac{\prod_{\nu=1}^{n}|w_{\nu + k}|a_{J,k}}{a_{m,n+k}}<\infty,\]
then by Proposition~\ref{4.4}, there exists $m\ge 1$ such that for any $n\ge 1$,
\[\sup_{N\ge 1}\inf_{k\ge N}\frac{\prod_{\nu=1}^{n}|w_{\nu + k}|a_{J,k}}{a_{m,n+k}}\le 1.\]
Therefore, we deduce that there exists $m'\ge 1$ such that we have for any $n\ge 1$,
\[\inf_{k\ge 1}\frac{\prod_{\nu=1}^{n}|w_{\nu + k}|a_{J,k}}{a_{m',n+k}}=0.\]
The assumptions of Theorem \ref{thm poly2} are thus satisfied.
\end{proof}

\begin{cor}
Let $B_w:H(\mathbb{C})\rightarrow H(\mathbb{C})$ be a weighted shift. If there exists $m\ge 1$ such that we have
\[\sup_{n\ge 1}\inf_{k\ge 1}\frac{\prod_{\nu=1}^{n}|w_{\nu + k}|}{m^{n+k}}<\infty,\] then for every non-constant polynomial $P$, if $P(B_w)$ satisfies the Hypercyclicity Criterion, $P(B_w)$ possesses also a hypercyclic subspace.
\end{cor}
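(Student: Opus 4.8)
The plan is to deduce the statement directly from the preceding corollary by realizing $H(\mathbb{C})$ as the K\"othe sequence space $\lambda^1(A)$ with $a_{j,k}=j^k$ and taking $J=1$, so that everything reduces to checking that the two hypotheses of that corollary hold for this particular matrix.

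First I would verify condition \eqref{eq 0} of the preceding corollary for $J=1$. Fix $m\ge 1$ and $j\ge 1$, and set $m_j=2jm$. Then for every $n\ge 0$ one computes
\[\frac{a_{j,k}a_{m,n+k}}{a_{J,k}a_{m_{j},n+k}}=\frac{j^km^{n+k}}{(2jm)^{n+k}}=\frac{1}{(2j)^n\,2^k}\underset{k\rightarrow\infty}{\longrightarrow}0,\]
which is the very computation already carried out after \eqref{cond B} for the space of entire functions. The point worth stressing is that the auxiliary index $m_j=2jm$ is chosen independently of $n$, so that the limit vanishes for every $n\ge 0$ simultaneously; this confirms the first hypothesis.

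Next, since $a_{1,k}=1^k=1$ for all $k$, the weight factor $a_{J,k}$ appearing in the product condition of the preceding corollary collapses, giving
\[\frac{\prod_{\nu=1}^n|w_{\nu+k}|\,a_{J,k}}{a_{m,n+k}}=\frac{\prod_{\nu=1}^n|w_{\nu+k}|}{m^{n+k}}.\]
Hence the hypothesis $\sup_{n\ge 1}\inf_{k\ge 1}\frac{\prod_{\nu=1}^n|w_{\nu+k}|}{m^{n+k}}<\infty$ of the present corollary is \emph{exactly} the product condition of the preceding corollary in the case $J=1$. With both hypotheses in place, the preceding corollary applies verbatim and yields the conclusion: for every non-constant polynomial $P$ such that $P(B_w)$ satisfies the Hypercyclicity Criterion, the operator $P(B_w)$ possesses a hypercyclic subspace.

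There is essentially no obstacle, as this corollary is a pure specialization of the previous one. The only two details deserving attention are that $a_{1,k}\equiv 1$ makes the generic weight factor disappear, so that the bare product $\prod_{\nu=1}^n|w_{\nu+k}|/m^{n+k}$ plays the role of the general expression, and that the choice of $m_j$ is uniform in $n$; both are immediate consequences of the explicit formula $a_{j,k}=j^k$.
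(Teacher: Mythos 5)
Your proposal is correct and matches the paper's (implicit) argument exactly: the corollary is a direct specialization of the preceding one to $H(\mathbb{C})=\lambda^1(A)$ with $a_{j,k}=j^k$, where $J=1$ gives $a_{J,k}\equiv 1$ and the choice $m_j=2jm$ (uniform in $n$) verifies condition \eqref{eq 0} via the same computation the paper carries out after \eqref{cond B}. Nothing is missing.
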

\begin{remark}
In particular, if $B_w:H(\mathbb{C})\rightarrow H(\mathbb{C})$ possesses a hypercyclic subspace and $P$ is a non-constant polynomial such that $P(B_w)$ satisfies the Hypercyclicity Criterion, then $P(B_w)$ possesses also a hypercyclic subspace.
\end{remark}
\begin{remark}
Condition \eqref{eq 0} is not satisfied for spaces $l^p(v)$ and $c_0(v)$. In fact, this one implies that $X$ is a Schwartz space (see \cite[Theorem 27.10]{Meise}).
A sufficient condition to satisfy condition \eqref{eq 0} is that for any $j,k\ge1$, we have $a_{j,k}\le a_{j,k+1}$, that there exists $J\ge 1$ such that $\lim_k a_{J,k}=\infty$ and that for any $j\ge 1$, there exists $m_j\ge 1$ such that \[\sup_k \frac{a_{j,k}^2}{a_{m_j,k}}<\infty.\]
The space $H(\mathbb{C})$ satisfies this condition and also, for example, the space $s$ of rapidly decreasing sequences.
\end{remark}


\begin{thebibliography}{HD}

\normalsize
\baselineskip=17pt

\bibitem[1]{Bernal} L. Bernal-Gonz\'alez and A. Montes-Rodr\'iguez, \emph{Non-finite dimensional closed vector spaces of universal functions for composition operators}, J. Approx. Theory 82 (1995), 375--391.

\bibitem[2]{Bonet} J. Bonet, F. Mart\'inez-Giménez and A. Peris, \emph{Universal and chaotic multipliers on spaces of operators}, J. Math. Anal. Appl. 297 (2004), 599--611.

\bibitem[3]{Diestel} J. Diestel, \emph{Sequences and Series in Banach Spaces}, Springer, New York, 1984.

\bibitem[4]{God} G. Godefroy and J. H. Shapiro, \emph{Operators with dense, invariant, cyclic vector manifolds}, J. Funct. Anal. 98 (1991), 229--269.

\bibitem[5]{Gonzalez} M. Gonz\'alez, F. Le\'on-Saavedra and A. Montes-Rodr\'iguez, \emph{Semi-Fredholm theory: hypercyclic and supercyclic subspaces}, Proc. London Math. Soc. (3) 81 (2000), 169--189.

\bibitem[6]{Karl2} K.-G. Grosse-Erdmann, \emph{Hypercyclic and chaotic weighted shifts}, Studia Math. 139 (2000), 47-68.

\bibitem[7]{Karl} K.-G. Grosse-Erdmann and A. Peris, \emph{Linear Chaos},
Springer, London, 2011.

\bibitem[8]{LeonMontes2} F. Le\'on-Saavedra and A. Montes-Rodr\'iguez, \emph{Linear structure of hypercyclic vectors}, J. Funct. Anal. 148 (1997), 524--545.

\bibitem[9]{LeonMontes} F. Le\'on-Saavedra and A. Montes-Rodr\'iguez, \emph{Spectral theory and hypercyclic subspaces}, Trans. Amer. Math. Soc. 353 (2001), 247--267.

\bibitem[10]{Leon} F. Le\'on-Saavedra and V. Müller, \emph{Hypercyclic sequences of operators}, Studia Math. 175 (2006), 1--18.

\bibitem[11]{Meise} R. Meise and D. Vogt, \emph{Introduction to Functional Analysis}, Oxford University Press, New York, 1997

\bibitem[12]{Menet} Q. Menet, \emph{Sous-espaces fermés de séries universelles sur un espace de Fréchet}, Studia Math. 207 (2011), 181-195

\bibitem[13]{Montes} A. Montes-Rodr\'iguez, \emph{Banach spaces of hypercyclic operators}, Michigan Math. J. 43 (1996), 419--436.

\bibitem[14]{Petersson} H. Petersson, \emph{Hypercyclic subspaces for Fréchet space operators}, J. Math. Anal. Appl. 319 (2006), 764--782.

\bibitem[15]{Salas}  H. N. Salas, \emph{Hypercyclic weighted shifts}, Trans. Amer. Math. Soc. 347 (1995), 993--1004 

\bibitem[16]{Shkarin} S. Shkarin, \emph{On the set of hypercyclic vectors for the differentiation operator}, Israel J. Math. 180 (2010), 271--283.

\end{thebibliography}
\end{document}